\title{Operator limit of the circular beta ensemble}
\date{}
\author{Benedek Valk\'o and B\'alint Vir\'ag}
    \newtheorem{theorem}{Theorem}
    \newtheorem{lemma}[theorem]{Lemma}
    \newtheorem{proposition}[theorem]{Proposition}
    \newtheorem{corollary}[theorem]{Corollary}
\theoremstyle{definition} 
\newcommand{\eps}{\varepsilon}
\newcommand{\ZZ}{{\mathbb Z}}
\newcommand{\FF}{{\mathcal
F}}
\newcommand{\R}{{\mathbb R}}
\newcommand{\HH}{{\mathbb H} }
\newcommand{\lstar}{{\raise-0.15ex\hbox{$\scriptstyle \ast$}}}
\theoremstyle{remark} 
\newcommand{\Sineb}{\operatorname{Sine}_{\beta}}
\newcommand{\Sine}{\operatorname{Sine}}
\newcommand{\Sineop}{\mathtt{Sine}_{\beta}}
\newcommand{\Dirop}{\mathtt{Dir}}
\newcommand{\Circop}{\mathtt{Circ}_{\beta,n}}
\definecolor{violet}{rgb}{0.8,0,0.2}
\newcommand{\ed}{\stackrel{d}{=}}
\newcommand{\cB}{{\mathcal B}}
\newcommand{\mat}[4]{\left( \begin{array}{cc}
#1 & #2  \\
#3 & #4  \\
\end{array} \right)}
\newcommand{\ac}{{\text{\sc ac}}}
\newcommand{\tr}{\operatorname{tr}}
\newcommand{\ind}{\mathbf 1}
\newcommand{\benedek}[1]{\textcolor{red}{\texttt{#1}}}
\newcommand{\sech}{{\operatorname{sech}}}
\newcommand{\res}{{\mathtt{r}\,}}
\newcommand{\ress}{{\mathtt{r}}}
\newcommand{\HS}{\textup{HS}}
\newcommand{\bside}{\noindent\textbf{\benedek{Begin side computation.}}
\begin{footnotesize}}
\newcommand{\eside}{\end{footnotesize}
\noindent \textbf{\benedek{End side computation.}}}
\newcommand{\spec}{\operatorname{spec}}
\begin{document}
\maketitle

%
\begin{abstract}
We provide a precise coupling of the finite circular beta ensembles and their limit process via their operator representations. We prove explicit bounds on the distance of the operators and the corresponding point processes. We also prove an estimate on the beta-dependence in the $\Sineb$ process. 
\end{abstract}

\begin{figure*}[ht]\hskip20mm
\includegraphics*[width=330pt]{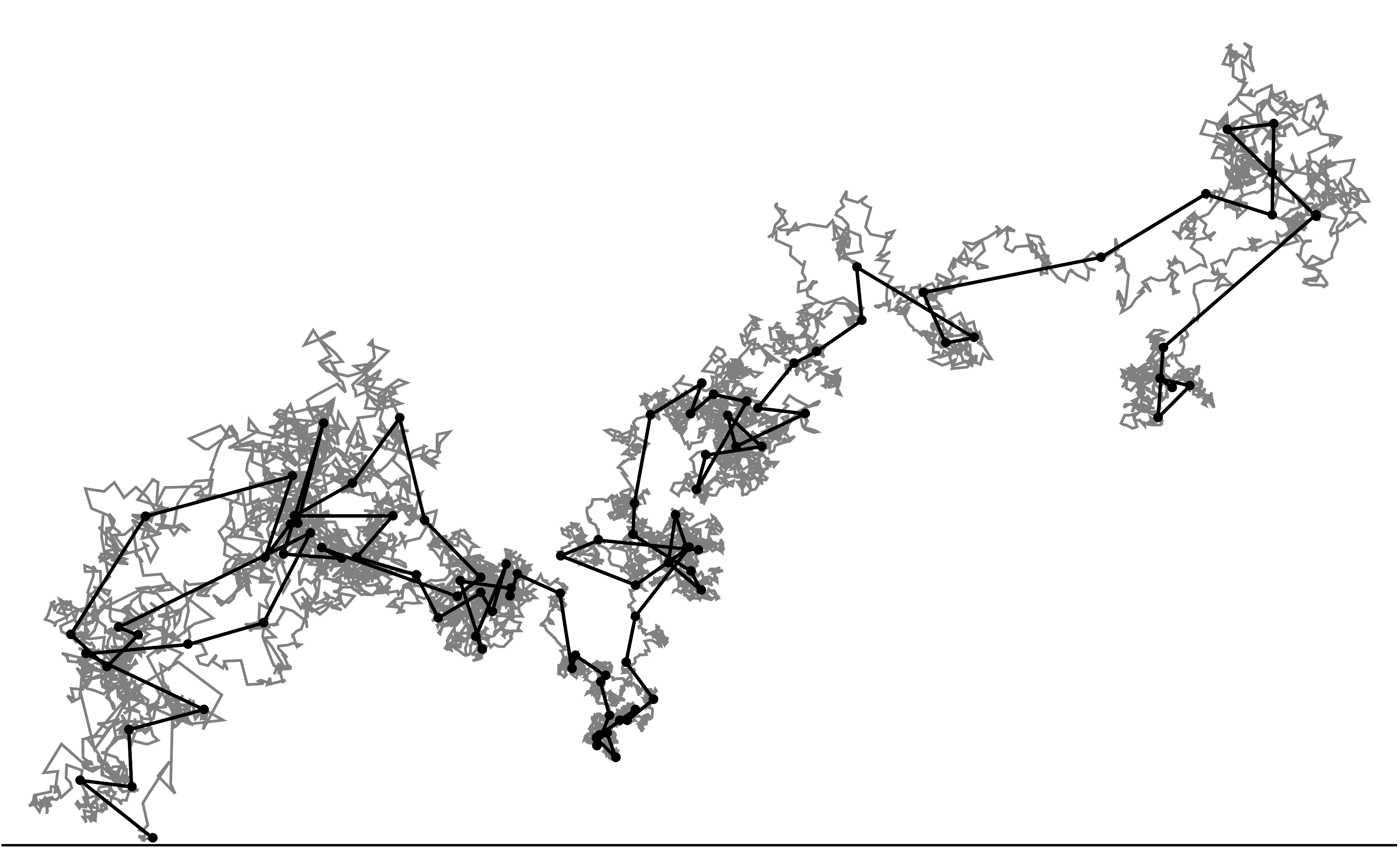}\\
\caption{Simulation of  hyperbolic Brownian motion and a coupled random walk.}
\end{figure*}

\section{Introduction}

\cite{BVBV_sbo}  introduced a family of differential operators parametrized by a path
$\gamma:[0,1)\to \HH$ in the upper half plane and two points on the boundary.

When the path
is a certain hyperbolic random walk in the Poincar\'e half-plane model, the operator $\Circop$ has eigenvalues given by the points of the  circular beta ensemble  scaled and lifted periodically to the real line. With the path  $\gamma(t)=\cB(-\frac{4}{\beta}\log(1-t))$ where $\cB$ is standard hyperbolic Brownian motion,  the operator $\Sineop$ has eigenvalues given by the $\Sineb$ process, the limit of the circular beta ensembles. (See Theorems \ref{thm:sbo} and \ref{thm:circop}.)

The inverses of these operators in a compatible basis are integral operators denoted by $\res \Circop$ and $\res \Sineop$, respectively. Our main result is a coupling which gives $\Circop \to \Sineop$ with  an explicit rate of convergence. (See Section \ref{s:op} for additional details.)

\begin{theorem}\label{thm:main}
There exist an array of stopping times $0=\tau_{n,n}<\tau_{n,n-1}<\dots<\tau_{n,0}=\infty$  so that $\cB(\tau_{n,\lceil (1-t)n \rceil}), t\in [0,1)$ has the law of the random walk on $\HH$
used to generate $\Circop$.

This coupling of $\Sineop$ and  the sequence of operators $\Circop$ satisfies
\begin{equation}\label{mainbound}
\|\res \Sineop - \res \Circop\|^2_{\HS} \le  \frac{\log^{6} n}{n}
\end{equation}
a.s. in the Hilbert-Schmidt norm for all $n\ge N$, where $N$ is random.
\end{theorem}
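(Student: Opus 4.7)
The plan is to couple $\Circop$ and $\Sineop$ by driving both operators from a single hyperbolic Brownian motion $\cB$, and then to reduce $\|\res\Sineop - \res\Circop\|_{\HS}$ to the hyperbolic distance between the two driving paths.

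First, I would build the array $(\tau_{n,k})$ by a sequential Skorokhod-type embedding of the hyperbolic random walk used in $\Circop$ into $\cB$. Starting from $\tau_{n,n}=0$ and proceeding through decreasing $k$, pick $\tau_{n,k}$ to be a stopping time for $\cB$ past $\tau_{n,k+1}$ at which the hyperbolic increment $\cB(\tau_{n,k})-\cB(\tau_{n,k+1})$ has the law of one step of the walk described in \cite{BVBV_sbo}. Isotropy of the step distribution, combined with classical Skorokhod embedding for the radial coordinate, yields such $\tau_{n,k}$ with $\ev[\tau_{n,k}-\tau_{n,k+1}]$ of order $\tfrac{1}{\beta(n-k)}$, matching the increment of the time change $-\tfrac{4}{\beta}\log(1-t)$ at $t=k/n$. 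A martingale maximal inequality then shows that $\tau_{n,k}$ is within $O(\sqrt{\log n/(n-k)})$ of $-\tfrac{4}{\beta}\log(1-k/n)$ uniformly in $k$, a.s.\ for large $n$.

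Next, set $\gamma(t)=\cB(-\tfrac{4}{\beta}\log(1-t))$ and $\gamma_n(t)=\cB(\tau_{n,\lceil (1-t)n \rceil})$. Between consecutive jumps of $\gamma_n$ the path $\gamma$ performs a hyperbolic Brownian excursion of duration $O(\tfrac{1}{n(1-t)})$, plus the small time-change error from Step 1. A law of the iterated logarithm argument with a union bound over the $n$ stopping times gives, a.s.\ for $n\ge N$,
\[
d_\HH\bigl(\gamma(t),\gamma_n(t)\bigr) \;\le\; C\sqrt{\tfrac{\log^{c} n}{n(1-t)}}, \qquad t\in[0,1),
\]
for a fixed $c$. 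I would then invoke an operator-level stability estimate in the spirit of \cite{BVBV_sbo}: the resolvents $\res\Circop$ and $\res\Sineop$ have integral kernels expressible through the transfer matrix of a linear ODE whose coefficient depends linearly on an $\su$-valued function of $\gamma$. Comparing the two transfer matrices via a Duhamel identity and applying Hilbert--Schmidt estimates yields
\[
\|\res\Sineop-\res\Circop\|_{\HS}^2 \;\le\; C\int_0^1 \rho(t)\, d_\HH\bigl(\gamma(t),\gamma_n(t)\bigr)^2\,dt
\]
for an explicit weight $\rho(t)$ with mild growth near $t=1$. Inserting the path bound above and cutting the integral at $1-t=1/n$ in the boundary layer produces \eqref{mainbound}: the bulk contribution gives $\tfrac{\log^{2c+1} n}{n}$ and the tail is absorbed by a priori estimates.

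The main obstacle is the boundary $t\to 1$, where the path explodes to $\partial\HH$, the kernel weight $\rho(t)$ is largest, and matching the ``compatible bases'' used to define the two resolvents requires tracking an $\SU$-valued frame along a rough path whose hyperbolic length grows like $\log n$. All other pieces --- Skorokhod embedding, LIL, Duhamel --- are essentially routine, but a careful split into a bulk region where the LIL gives tight control and a thin boundary layer where one falls back on crude operator norm bounds is what pins down the polylogarithmic exponent $6$ in \eqref{mainbound}.
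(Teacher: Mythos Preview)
Your overall architecture --- embed the walk into the Brownian motion via stopping times, convert the time discrepancy into a hyperbolic distance bound between the two driving paths, and then feed that into a kernel-level Hilbert--Schmidt comparison with a bulk/boundary split --- matches the paper's. The operator-comparison step and the truncation near $t=1$ are also broadly correct in spirit.

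The genuine gap is in Step~1. A classical Skorokhod embedding of the radial increment gives a stopping time whose variance is of order $k^{-2}$ at the step with Beta parameter $\tfrac{\beta}{2}k$, and these fluctuations accumulate: $\tau_{n,k}$ then deviates from the deterministic target $t_{n,k}=\tfrac{4}{\beta}\log(n/k)$ by order $k^{-1/2}$ (with log factors), exactly as your martingale maximal inequality predicts. Feeding a time error of size $h\asymp k^{-1/2}$ through the modulus of continuity gives $d_\HH(\tilde\cB(t),\tilde\cB_n(t))\lesssim h^{1/2}\asymp k^{-1/4}=((1-t)n)^{-1/4}$, not the $((1-t)n)^{-1/2}$ you assert in Step~2. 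Propagating this weaker path bound through your integral inequality yields $\|\res\Sineop-\res\Circop\|_{\HS}^2=O(n^{-1/2})$, missing \eqref{mainbound} by a full power of $n^{1/2}$.

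The paper closes this gap not with Skorokhod but with a heat-kernel estimate (Lemma~\ref{lem:betacpl}): at the deterministic time $t=\tfrac{4}{2\gamma+1}$ the law of $d_\HH(\cB(0),\cB(t))$ is already within total variation $O(t)$ of the Beta target, and is stochastically dominated by it. This allows a coupling (Proposition~\ref{prop:1step}) in which the stopping time \emph{equals} the deterministic value with probability $1-O(k^{-1})$, and otherwise is bounded by $O(\log^{3+}n/k)$. Summing these rare, one-sided corrections gives $|\tau_{n,k}-t_{n,k}|\le \log^{4.5}n/k$ (Proposition~\ref{prop:stoptimes}), an order of magnitude tighter than the $k^{-1/2}$ fluctuation from Skorokhod. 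This is what produces the $((1-t)n)^{-1/2}$ path bound and hence the $n^{-1}$ rate. Without this heat-kernel input, your outline yields a weaker theorem.
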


As a corollary, we get new results about the rate of convergence of the eigenvalue processes.
Let $\lambda_k, k\in \ZZ$ be the ordered sequence of eigenvalues of $\Sineop$ with $\lambda_0<0<\lambda_1$, the sequence $\lambda_{k,n}, k\in \ZZ$ is defined analogously for $\Circop$.
\begin{corollary}\label{cor:cpl} In the coupling of Theorem \ref{thm:main} for all $n\ge N$ we have a.s.
\begin{equation}\label{HSev}
\sum_{k\in \ZZ}\left(\lambda^{-1}_k-\lambda^{-1}_{n,k}\right)^2 \le  \frac{\log^{6} n}{n}.
\end{equation}
Moreover, as $n\to \infty$ we have a.s.
\begin{align}\label{cpl}
\max_{|k|\le \frac{n^{1/4}}{\log^{2} n}} |\lambda_k-\lambda_{k,n}|\to 0.
\end{align}
For all $\eps>0$ there is a random $N_\eps$ so that for $n\ge N_\eps$ and  $|k|\le n^{1/2-\eps}$ we have 
\begin{equation}\label{joseph}
|\lambda_k-\lambda_{k,n}|\le \frac{1+k^2}{n^{1/2-\eps}}.
\end{equation}
\end{corollary}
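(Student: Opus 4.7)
The plan is to combine Theorem~\ref{thm:main} with a Hoffman--Wielandt type inequality for compact self-adjoint operators to establish \eqref{HSev}, and then convert the pointwise bounds on inverse eigenvalues that follow into bounds on the eigenvalues themselves.

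The operators $\res\Sineop$ and $\res\Circop$ are compact and self-adjoint, with non-zero eigenvalues $\{\lambda_k^{-1}:k\in\ZZ\}$ and $\{\lambda_{k,n}^{-1}:k\in\ZZ\}$, respectively. In both cases the positive eigenvalues are indexed by $k\ge 1$ and decrease to $0$, while the negative eigenvalues are indexed by $k\le 0$ and increase to $0$. Matching the two spectra by this index $k$ therefore amounts to the usual positive-to-positive and negative-to-negative pairing ordered by magnitude, and the Hoffman--Wielandt inequality for compact self-adjoint operators in the Hilbert--Schmidt norm yields
\[
\sum_{k\in\ZZ}\bigl(\lambda_k^{-1}-\lambda_{k,n}^{-1}\bigr)^{2}\,\le\,\|\res\Sineop-\res\Circop\|_{\HS}^{2},
\]
which combined with \eqref{mainbound} is exactly \eqref{HSev}.

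From \eqref{HSev} each individual term is at most $\log^{3}n\cdot n^{-1/2}$. To pass to control of $\lambda_k-\lambda_{k,n}$ I would use the identity
\[
\lambda_k-\lambda_{k,n}\,=\,\lambda_k\lambda_{k,n}\bigl(\lambda_{k,n}^{-1}-\lambda_k^{-1}\bigr)
\]
and bound the product $|\lambda_k\lambda_{k,n}|$. The known linear growth $|\lambda_k|\le C(1+|k|)$ for $\Sineb$ (from its asymptotic density) handles the limit side. A triangle inequality on $\lambda_{k,n}^{-1}$ then shows that whenever $\log^{3}n\cdot n^{-1/2}\le \tfrac{1}{2}|\lambda_k^{-1}|$, i.e.\ when $|k|\lesssim n^{1/2}/\log^{3}n$, one also has $|\lambda_{k,n}|\le 2|\lambda_k|$. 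Plugging these bounds into the identity gives
\[
|\lambda_k-\lambda_{k,n}|\,\le\, C\,(1+k^{2})\,\frac{\log^{3}n}{n^{1/2}}
\qquad\text{for}\ |k|\lesssim n^{1/2}/\log^{3}n.
\]
For $|k|\le n^{1/4}/\log^{2}n$ the right-hand side is $O(1/\log n)\to 0$, which is \eqref{cpl}. For \eqref{joseph}, once $n$ is large enough that $C\log^{3}n\le n^{\eps}$ the same bound yields $|\lambda_k-\lambda_{k,n}|\le (1+k^{2})/n^{1/2-\eps}$ uniformly over $|k|\le n^{1/2-\eps}$, the factor $1+k^{2}$ (rather than $k^{2}$) absorbing the small-$k$ contribution where $|\lambda_k\lambda_{k,n}|$ is of order one.

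The main obstacle is the bootstrap step: converting the $\ell^{2}$-type bound on $\lambda_k^{-1}-\lambda_{k,n}^{-1}$ into a two-sided linear estimate on $|\lambda_{k,n}|$ over a range of $k$ that grows with $n$. Linear growth on the $\Sineb$ side and Hilbert--Schmidt closeness of the resolvents together provide precisely what is needed.
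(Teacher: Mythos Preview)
Your proposal is correct and follows essentially the same approach as the paper: Hoffman--Wielandt for \eqref{HSev}, the linear growth $|\lambda_k|\le C(1+|k|)$ (which the paper derives from a law of large numbers, Proposition~\ref{prop:LLN}), and the bootstrap via the triangle inequality on $\lambda_{k,n}^{-1}$ to control $|\lambda_{k,n}|$ and then $|\lambda_k-\lambda_{k,n}|$. The only cosmetic difference is that for \eqref{cpl} the paper uses the rearranged identity $(\lambda_{k,n}-\lambda_k)(1-b_{k,n}\lambda_k)=b_{k,n}\lambda_k^2$ instead of bounding $|\lambda_{k,n}|$ directly, but for \eqref{joseph} it does exactly what you do; the two manipulations are equivalent.
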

This provides the best known coupling of the circular $\beta$-ensemble to the $\Sineb$ process, even for $\beta=2$, when both processes are determinantal with explicitly given kernels.
For $\beta=2$ the bound \eqref{joseph} improves on a coupling given in \cite{MNN} in which the inequality
holds with the exponent $1/3$ instead of $1/2$, for $|k|\le n^{1/4}$.

Using the techniques introduced in our proof we are also able to give an estimate on the dependence on $\beta$ in the $\Sineb$ process.  
\begin{theorem}\label{thm:betadep}
Construct the $\Sineop$ operators for all $\beta>0$ with  the same hyperbolic Brownian motion. Denote the eigenvalues corresponding to $\beta$ by  $\{\lambda_{k,\beta}, k\in \ZZ\}$ with $\lambda_{0,\beta}<0\le \lambda_{1,\beta}$. Then for  $0<\kappa<1$ there is an a.s.~finite $C=C_\kappa$ so that if $\kappa<\beta_1<\beta<\kappa^{-1}$ and 
 $\delta=\left|\frac4{\beta}-\frac4{\beta_1}\right|\le 1/3$ then 
\begin{align}\label{sb1}
\sum_{k} \left(\frac1{\lambda_{k,\beta}}-\frac1{\lambda_{k,\beta_1}}\right)^2\le \|\res \Sineop-\res \mathtt{Sine}_{\beta_1}   \|_{\HS}   \le C\delta \log(\tfrac1{\delta}).
\end{align}
\end{theorem}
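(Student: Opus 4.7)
The first inequality is the Hoffman--Wielandt inequality for compact self-adjoint operators: if $A,B$ are such with eigenvalues $\{a_k\},\{b_k\}$ paired compatibly, then $\sum_k(a_k-b_k)^2\le\|A-B\|_{\HS}^2$. Applied to $A=\res\Sineop$ and $B=\res\mathtt{Sine}_{\beta_1}$, whose eigenvalues are $\{1/\lambda_{k,\beta}\}$ and $\{1/\lambda_{k,\beta_1}\}$, this gives the left inequality. The statement compares to $\|\cdot\|_{\HS}$ rather than its square, but once the right-hand bound is established and $\delta\le 1/3$, the HS norm is $\le 1$ (enlarging $C_\kappa$ if necessary), so $\|\cdot\|_{\HS}^2\le\|\cdot\|_{\HS}$ and the weaker form follows.

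The substance is the right inequality. The coupling is intrinsic: all $\Sineop$ are built from the same hyperbolic Brownian motion $\cB$, and varying $\beta$ only reparametrizes the path. At parameter $t\in[0,1)$ the Brownian times used by the two operators are $\tfrac{4}{\beta}|\log(1-t)|$ and $\tfrac{4}{\beta_1}|\log(1-t)|$, differing by exactly $\delta|\log(1-t)|$. I would use the operator framework of Section~\ref{s:op} --- the same one that drives Theorem~\ref{thm:main} --- to express $\|\res\Sineop-\res\mathtt{Sine}_{\beta_1}\|_{\HS}^2$ as an $L^2$ integral of a kernel difference that is an explicit functional of the paths $\gamma_\beta,\gamma_{\beta_1}$.

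The crucial step is to bound this kernel difference pointwise by the hyperbolic displacement $d_\HH(\gamma_\beta(t),\gamma_{\beta_1}(t))$ to first order, using that the displacement is a hyperbolic Brownian increment over a \emph{deterministic} time interval of length $\delta|\log(1-t)|$. Two regimes arise. In the bulk $|\log(1-t)|\le 1/\delta$, a uniform hyperbolic Brownian modulus-of-continuity estimate (with constants uniform in $\beta\in[\kappa,\kappa^{-1}]$) controls the displacement, and after squaring and integrating against the smooth resolvent kernel one obtains a contribution of order $\delta\log(1/\delta)$ to the HS norm. In the tail $|\log(1-t)|>1/\delta$ the parameter set has measure $\sim\delta$ and a crude $L^2$ bound suffices. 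Uniformity in $\beta\in[\kappa,\kappa^{-1}]$ on a single a.s.~event comes from continuity of $\gamma_\beta$ in $\beta$ together with a countable-dense argument.

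The main obstacle will be sharpening the naive Brownian modulus bound --- which gives only $\|\cdot\|_{\HS}=O(\sqrt\delta)$ because a Brownian increment of time $\delta|\log(1-t)|$ has size $\sqrt{\delta|\log(1-t)|}$ --- down to the claimed rate $\delta\log(1/\delta)$. The expected route is a first-variation argument: differentiate the resolvent in $\beta$ along the one-parameter family $\gamma_\beta$, express $\partial_\beta\res\Sineop$ as an explicit HS operator whose norm is $O(\log(1/\delta))$ after an integration-by-parts that trades the square-root Brownian oscillation for a derivative of the smooth resolvent kernel, and integrate over $\beta$ to recover the $\delta$ prefactor. This is the same kind of cancellation responsible for the HS rate in \eqref{mainbound}, and I would expect the proof to reuse that machinery more or less directly.
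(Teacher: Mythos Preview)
Your overall architecture is right and matches the paper: Hoffman--Wielandt for the left inequality, then truncate the resolvent kernels near $t=1$ and control the bulk by the hyperbolic distance $d_\HH(\tilde\gamma_\beta(t),\tilde\gamma_{\beta_1}(t))$, which is a Brownian increment over the deterministic time $\delta|\log(1-t)|$ governed by the modulus of continuity (Proposition~\ref{prop:modcont}). That is exactly what the paper does.

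The gap in your proposal is that you were misled by a typo in the displayed inequality~\eqref{sb1}: the middle term should be $\|\res\Sineop-\res\mathtt{Sine}_{\beta_1}\|_{\HS}^{2}$, not the first power. The paper's own proof (Section~\ref{s:beta}) ends with ``Collecting all the terms gives $\|\res\mathtt{Sine}_\beta-\res\mathtt{Sine}_{\beta_1}\|_{\HS}^{2}\le C\delta\log(1/\delta)$,'' and with the square in place Hoffman--Wielandt gives the left inequality directly, without your detour through $\|\cdot\|_{\HS}\le 1$. Once the target is the \emph{squared} norm, the computation you yourself labelled ``naive'' is already sufficient: the kernel difference is of order $d_\HH$, so its square is of order $d_\HH^2\lesssim\delta\log(1/(1-t))\cdot(\text{log factor})$, which after integrating against the resolvent weights (this is the content of Proposition~\ref{prop:HS3}, with $\sinh^2(\tfrac12 d_\HH)\le C\min(\delta\log(1/\delta)(1-t)^{-1},1)$) produces $\|\cdot\|_{\HS}^2\le C\delta\log(1/\delta)$. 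The tail beyond $T=1-\delta$ is handled by Proposition~\ref{prop:HS2} and contributes $\delta^{1+c}$, which is lower order.

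Consequently the ``first-variation / integration-by-parts'' sharpening you propose is unnecessary: there is no gap to close. (It is also unclear that such an argument would work --- the dependence of $\res\Sineop$ on $\beta$ enters through the rough path $\cB$ at a time-changed argument, so differentiating in $\beta$ does not obviously produce a bounded HS operator without already invoking the modulus of continuity you were trying to avoid.) Drop that paragraph, correct the exponent on the HS norm, and your outline becomes the paper's proof.
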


\subsection*{Sructure of the paper}
The proof relies on a precise coupling of a hyperbolic random walk and hyperbolic Brownian motion. 

The starting point is a hyperbolic heat kernel bound. Consider
the squared Euclidean norm for a hyperbolic Brownian motion  started at the origin 
in the the Poincar\'e disk model. We show that this quantity is very close in total variation to a beta random variable, which also stochastically dominates it (Lemma \ref{lem:betacpl} in Section \ref{s:1cpl}). 

This leads to a coupling of the hyperbolic random walk steps
with hyperbolic Brownian increments at stopping times (Sections \ref{s:1step} and \ref{s:path}). For  each single step, with high probability we stop at a fixed time. Otherwise, we just wait for the Brownian motion to 
hit the right distance. At the tail of the random walk a slightly different coupling is implemented.

A modulus of continuity estimate for hyperbolic Brownian
motion then implies that the random walk is close to the Brownian path (Section \ref{s:pathcmp}).

In Section \ref{s:HS} we show that if two paths are close and escape linearly to the boundary of $\HH$ then 
the corresponding operators are also close. Finally, in Section \ref{s:proof} we use the the coupling and the linear rate of escape for hyperbolic Brownian motion to prove Theorem \ref{thm:main}. 

Section \ref{s:beta} proves Theorem \ref{thm:betadep}.
Some of the technical facts needed are collected in the Appendix. 

\subsection*{Historical background}

The modern history of random matrices originates from \cite{Wigner51}, who used them to approximate the spectrum of 
self-adjoint operators from statistical physics point of view. 

In the following decades the scaling behavior of a number of random matrix models were derived.  The point process limits of the random matrix spectra were described via the limiting joint densities, usually relying on some algebraic structure of the finite models. (See the monographs \cite{AGZ}, \cite{ForBook}, \cite{mehta} for an overview of the classical  results.)

\cite{DE} constructed tridiagonal random matrix models with spectrum distributed as  beta ensembles,  one parameter extensions of classical random matrix models. \cite{ES} observed that under the appropriate scaling these tridiagonal matrix models behave like approximate versions of random stochastic operators, and conjectured that scaling limits of beta ensembles can be described as the spectra of these objects . 

These conjectures were confirmed in \cite{RRV} and \cite{RR} for the the soft and hard edge scaling limits of beta ensembles. The authors rigorously defined the stochastic differential operators that show up as limits, and proved the convergence of the finite ensembles to the spectrum of these operators. 

In \cite{BVBV} and \cite{KS} the bulk scaling limit of the gaussian and circular beta ensembles were derived, and  the counting functions of the limit processes were characterized via coupled systems of SDEs. In \cite{Nakano} and \cite{BVBV_sbo} it was shown that the scaling limit of the circular beta ensemble is the same as $\Sineb$, the bulk limit of the gaussian beta ensemble. Furthermore, \cite{BVBV_sbo} constructed a stochastic differential operator with a spectrum given by $\Sineb$. 

The coupling of the circular beta ensemble for $\beta=2$ (the circular unitary ensemble) to its limit, the $\Sine_2$ process has been recently studied in \cite{MNN} and \cite{MM}. In \cite{MNN} the  circular unitary ensembles of various sizes are coupled together and it is shown that the scaled ensembles converge a.s.~to a $\Sine_2$ process. Moreover, a bound of the form (\ref{joseph}) is given with an exponent $1/3$ instead of $1/2$. 
In \cite{MM} the total  variation distance between the counting functions of the finite and the limiting process is considered. Denote by $\mathcal{N}_n$  the counting function of the appropriately scaled circular unitary ensemble of size $n$, and  by $\mathcal{N}$ the counting function of the $\Sine_2$ process. It is shown that for any fixed interval $I$  the following bound holds:
\begin{align}\label{MM_TV}
d_{TV}(\mathcal{N}_n(I), \mathcal{N}(I))\le 5 \frac{|I|^2}{n^{3/2}}, \qquad \text{for $n\ge n_0(I)$.} 
\end{align} 
This provides a bound on the distance between the distributions for the number of points in a given interval, but does not seem to imply a process level result.

\section{Stochastic differential operators}\label{s:op}

We review the framework introduced in \cite{BVBV_sbo} to study random matrix ensembles via differential operators.

%
%


\subsection{Dirac operators}

We consider differential operators of the form
\begin{align}\label{ttt}
\tau: f\to R^{-1}(t) J \frac{d}{dt}f.
\end{align}
Here $f:[0,1)\to \R^2$, and
\begin{align}\label{RXX}
J=\mat{0}{-1}{1}{0}, \qquad R=\frac12 X^tX, \qquad X=\frac{1}{\sqrt{y}}\mat{1}{-x}{0}{y},
\end{align}
with $x:[0,1)\to \R$ and $y:[0,1)\to (0,\infty)$. We consider boundary conditions parametrized by  nonzero vectors $u_0, u_1\in \R^2$ where we assume that $u_0^tJu_1=1$.
We set the domain of the differential operator $\tau$ as
\begin{align}\label{domLC}
\textup{dom}(\tau)=\{v\in L^2_R \cap  \ac\,:\,  \, \tau v\in L^2_R,\,v(0)^t\,J\,u_0=0, \,\lim_{s\to 1} v(s)^t\,J\,u_1 = 0\}.
\end{align}
Here $L^2_R$ is the $L^2$ space of functions $f:[0,1)\to \R^2$ with the $L^2$ norm $\|f\|_2^2=\int_0^1 f^t R f ds$ while $\ac$ is the set of absolutely continuous functions.

The function $\gamma=x+i y$ is a path in the upper half plane $\{(x,y): y>0\}$. In \cite{BVBV_sbo} it was shown that various  properties of $\tau$ can be identified by treating $\gamma$ as a path in the  hyperbolic plane $\HH$ (using the upper half plane representation) with $u_0, u_1$ identified with boundary points $\eta_0, \eta_1$ of $\HH$. The set of boundary points of $\HH$ in the upper half plane representation is $\R\cup \{\infty\}$.  A nonzero $u=(u_1,u_2)^t\in \R^2$ corresponds to the boundary point $\mathcal P u=\frac{u_1}{u_2}$ where this is $\infty$ if $u_2=0$.
To show the dependence on these parameters, we use the notation $\tau=\Dirop(\gamma,\eta_0, \eta_1)$. 

%
%
%
%

For a given boundary point $\eta\in \partial \HH$ the (signed) horocyclic distance of points $a$ and $b$ in $\HH$ with respect to $\eta$ is defined as
\begin{align*}
d_\eta(a,b)=\lim_{z\to \eta}\left(d_{\HH}(a,z)-d_{\HH}(b,z)\right).
\end{align*}
Here $d_{\HH}$ is the hyperbolic distance and the limit is evaluated along a sequence of points in $\HH$ converging to $\eta$. We record the following formulas for the half-plane representation:
\begin{align}\label{horo}
d_\eta(x+i  y, i)=\begin{cases}
\log(\tfrac1{y}), &\qquad \text{if }\eta=\infty,\\[5pt]
\log\left(\frac{(x-q)^2+y^2}{(1+q^2)y}\right),&\qquad \text{if }\eta=q\in \R.
\end{cases}
\end{align}

The following theorem gives a condition in terms of the parameters $\gamma, \eta_0, \eta_1$ for  $\tau$ to be self-adjoint with a Hilbert-Schmidt inverse.
\begin{theorem}[\cite{BVBV_sbo}]\label{thm:Dirop}
 Let $\eta_0, \eta_1$ be distinct boundary points of $\HH$ and   $\gamma:[0,1)\to \HH$ be measurable and locally bounded.  Assume that
there is a $\xi\in \HH$ with
\begin{align}\label{horo_int1}
\int_0^T  e^{d_{\eta_1}(\gamma(t),\xi)}dt<\infty, \quad \text{and} \quad
\int_0^T \int_s^T e^{d_{\eta_0}(\gamma(s),\xi)+d_{\eta_1}(\gamma(s),\xi)} ds dt<\infty.
\end{align}
Then the operator $\tau=\Dirop(\gamma,\eta_0, \eta_1)$ is self-adjoint on $\textup{dom}(\tau)$ and its inverse is Hilbert-Schmidt. The inverse $\tau^{-1}$ is an integral operator on $L^2_R$ with kernel function
\begin{align}\label{HSkernel1}
K(x,y)=\left(u_0 u_1^t  \ind(x<y)+u_1 u_0^t \ind(x\ge y)  \right) R(y).
\end{align}
\end{theorem}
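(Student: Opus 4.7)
The strategy is to construct explicitly the bounded operator $T$ on $L^2_R$ with kernel \eqref{HSkernel1} and to show it is a two-sided inverse of $\tau$ satisfying $\|T\|_{\HS}<\infty$. A densely defined symmetric operator possessing a bounded, everywhere-defined inverse is automatically self-adjoint, so self-adjointness of $\tau$ will follow for free once these properties of $T$ are established.

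For the right inverse, I would compute directly. Writing
\[
(Tf)(x) \;=\; u_1 u_0^t \int_0^x R f\,dy \;+\; u_0 u_1^t \int_x^1 R f\,dy,
\]
differentiation gives $\frac{d}{dx}(Tf)(x) = (u_1 u_0^t - u_0 u_1^t)\,R(x) f(x)$. The normalization $u_0^t J u_1 = 1$ forces $u_1 u_0^t - u_0 u_1^t = -J$, so $\tau T f = R^{-1} J (-J) R f = f$. The boundary condition at $0$ holds because $(Tf)(0)$ is a scalar multiple of $u_0$ and $u_0^t J u_0 = 0$; the condition at $1$ follows analogously once we know $(Tf)(s)$ has a limit as $s \to 1^-$, which by Cauchy--Schwarz reduces to the first integrability condition in \eqref{horo_int1}. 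The reverse identity $T \tau = I$ on $\dom(\tau)$ is then obtained by integration by parts against the kernel, with the boundary contributions killed by the domain conditions on $v$.

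The core quantitative step is the Hilbert--Schmidt estimate. Conjugating by $R^{1/2}$ (legitimate since $R$ is pointwise positive definite) recasts $T$ as an integral operator on ordinary $L^2$ with kernel $R^{1/2}(x) K(x,y) R^{-1/2}(y)$, so
\[
\|T\|_{\HS}^2 \;=\; \int_0^1\!\!\int_0^1 \tr\!\bigl(K(x,y)^t R(x) K(x,y) R(y)^{-1}\bigr)\,dx\,dy.
\]
Plugging in the explicit kernel and using the cyclic property of the trace with the rank-one structure $K = u_i u_j^t R$, the integrand collapses on $\{x<y\}$ to $(u_0^t R(x) u_0)(u_1^t R(y) u_1)$ and to the symmetric expression on $\{x>y\}$. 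From \eqref{horo} one verifies that if $u_i$ corresponds to the boundary point $\eta_i$ then $u_i^t R(t) u_i$ is a positive constant times $e^{d_{\eta_i}(\gamma(t), i)}$, and hence comparable to $e^{d_{\eta_i}(\gamma(t), \xi)}$ for any fixed $\xi \in \HH$. Thus $\|T\|_{\HS}^2$ reduces, up to constants, to the second integral in \eqref{horo_int1}. The main obstacle throughout is the singular endpoint $t=1$: one must justify that the boundary limit appearing in the definition of $\dom(\tau)$ is well-defined for elements of the form $Tf$ and for general elements of $\dom(\tau)$, and that the formal integration by parts yielding $T \tau = I$ produces no anomalous contribution at $1$. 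All of these reductions hinge on the first condition in \eqref{horo_int1}, which plays the role of the Weyl limit-circle criterion for this Dirac operator.
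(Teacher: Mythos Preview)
The paper does not prove this theorem; it is quoted from \cite{BVBV_sbo} and used as a black box. There is therefore no ``paper's own proof'' to compare against.

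That said, your outline is the standard route for such Dirac operators and is essentially correct. A few remarks. First, your opening sentence implicitly uses that $T$ is itself self-adjoint on $L^2_R$; you should record the one-line check that the kernel satisfies $R(x)K(x,y)=K(y,x)^t R(y)$, which is what symmetry in the weighted inner product amounts to. Second, the claim that $\tau T=I$ together with $T\tau=I$ on $\dom(\tau)$ yields $\mathrm{Range}(T)=\dom(\tau)$ is the step that actually pins down self-adjointness, so it deserves to be stated explicitly rather than folded into ``for free.'' Third, the endpoint issue at $t=1$ that you flag is indeed the crux: to show $Tf\in\dom(\tau)$ you need $\int_x^1 Rf\,dy\to 0$, which by Cauchy--Schwarz in $L^2_R$ follows from $\int_0^1 u_1^t R(t)u_1\,dt<\infty$, and this is precisely the first condition in \eqref{horo_int1} via the identity $u_i^t R(t)u_i=\tfrac12|u_i|^2 e^{d_{\eta_i}(\gamma(t),i)}$ that the paper records later. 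Your Hilbert--Schmidt computation is correct; note that the integrand you obtain on $\{x<y\}$ is $e^{d_{\eta_0}(\gamma(x),\xi)+d_{\eta_1}(\gamma(y),\xi)}$, so the second condition in \eqref{horo_int1} as printed (with $\gamma(s)$ in both exponents) appears to contain a typo --- the inner variable should be $\gamma(t)$ in the $\eta_1$ term.
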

Suppose that $\gamma,\eta_0, \eta_1$ satisfy the conditions of the theorem above and consider the operator $\tau=\Dirop(\gamma,\eta_0, \eta_1)$. Let $\hat \tau=X\tau  X^{-1}$, this is just $\tau$ after a  change of coordinates. In particular $\hat \tau$  is a self-adjoint differential operator on $\{X v\in \textup{dom}(\tau)\}\subset L^2$, with the same spectrum as $\tau$. We denote the inverse of $\hat \tau$ by $\res \tau$ ($\res$ standing for resolvent).
By Theorem \ref{thm:Dirop} the operator $\res \tau$
 is an integral operator acting on $L^2$ functions with kernel
\begin{align}\label{HSkernel}
K_{\res \tau}(x,y)=\frac12 \left(a(x) c(y)^t  \ind(x<y)+ c(x) a(y)^t \ind(x\ge y) \right)
\end{align}
where $a(s)=X(s)u_0$ and $c(s)= X(s) u_1$.

%

\subsection{Stochastic operators}

The gaussian and circular $\beta$ ensembles are defined via the following joint densities on $\R^n$ and $[0,2\pi)^n$:
\begin{align}\label{gbetadens}
p_{\beta,n}^{\text{g}}(\lambda_1,\dots,\lambda_n)&=\frac{1}{Z_{n,\beta}^{\text{g}}} \prod_{1\le j<k\le n} |\lambda_j-\lambda_k|^\beta e^{-\tfrac{\beta}{4}\sum_{j=1}^n \lambda_j^2},
\\p_{\beta,n}^{\text{c}}(\lambda_1,\dots,\lambda_n)&=\frac{1}{Z_{n,\beta}^{\text{c}}} \prod_{1\le j<k\le n} |e^{i \lambda_j}-e^{i \lambda_k}|^\beta. \label{cbetadens}
\end{align}
For $\beta=2$ these give the joint eigenvalue densities of the gaussian and circular unitary ensemble, respectively. We use angles to represent the eigenvalues in the circular case.

The bulk scaling limits of these ensembles have been identified in  \cite{BVBV} and \cite{KS}. In \cite{Nakano} and \cite{BVBV_sbo} it was shown that the scaling limit of the circular beta ensemble is the same as the bulk limit of the gaussian beta ensemble.
\begin{theorem}[\cite{BVBV}]\label{thm:car}
Fix $\beta>0$ and $|E|<2$. Let $\Lambda_n^{\textup{g}}$ be a finite point process with density (\ref{gbetadens}). Then $\sqrt{4-E^2} \sqrt{n} (\Lambda_n^{\text{g}}-\sqrt{n} E)$ converges in distribution to a point process $\Sineb$.
\end{theorem}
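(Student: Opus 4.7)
The plan is to work via the tridiagonal matrix model of \cite{DE}, which realizes the gaussian $\beta$-ensemble \eqref{gbetadens} as the spectrum of an explicit random tridiagonal matrix $H_n$ with chi-distributed subdiagonal entries. The eigenvalue equation $H_n v=\lambda v$ is a three-term recursion, and a Prüfer-type change of variables (encoding the pair $(v_k,v_{k+1})$ by an angle $\vfi_k(\lambda)$ and a modulus) turns it into a first-order recursion for $\vfi_k(\lambda)$. The key point is that the number of eigenvalues in an interval $(\lambda_-,\lambda_+)$ equals the winding of the relative phase $\vfi_k(\lambda_+)-\vfi_k(\lambda_-)$ as $k$ traverses $\{0,\dots,n-1\}$. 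So the proof of the theorem reduces to controlling this winding in the bulk scaling.

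Next, I would localize around the bulk energy: set $\lambda=\sqrt n E+\mu/(\sqrt n \sqrt{4-E^2})$ and study the relative phase $\vfi_k(\lambda)-\vfi_k(\sqrt n E)$ as a function of $\mu$ under the index parametrization $k=\lfloor tn\rfloor$, $t\in[0,1)$. After subtracting the deterministic rotation coming from the mean recursion, the centered chi entries (normalized by their standard deviations, which are $\Theta(\sqrt\beta)$) satisfy an invariance principle, so the discrete phase dynamics should converge to a two-parameter SDE driven by a pair of independent Brownian motions. A direct computation, facilitated by writing the state in hyperbolic coordinates, identifies this limiting SDE with the Brownian carousel: the rescaled state vector performs hyperbolic Brownian motion on $\HH$ and its rate of $\mu$-rotation defines the $\Sineb$ counting function. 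This is exactly the construction in \cite{BVBV}.

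The last step is to upgrade convergence of phases to convergence of point processes. Tightness on bounded $\mu$-windows follows from a uniform upper bound on the $\mu$-derivative of $\vfi_k$, which keeps the number of eigenvalues in any bounded window bounded in $n$. Combined with strict monotonicity of the winding in $\mu$, joint distributional convergence of the phases at finitely many parameters $\mu_1,\dots,\mu_m$ promotes to weak convergence of counting functions, giving $\sqrt{4-E^2}\sqrt n(\Lambda_n^{\textup g}-\sqrt n E)\cd \Sineb$. The main obstacle is the uniform-in-$\mu$ control: for each fixed $\mu$ the phase is a semimartingale with non-Lipschitz coefficients reflecting hyperbolic geometry near the boundary, and one must establish enough regularity in $\mu$ of the limit so that eigenvalue locations (zeros of the relative winding condition) pass to the limit. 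This is handled by exploiting the contractive behavior of the hyperbolic drift on bounded neighborhoods of the starting phase and the fact that the reference phase performs only an $O(\log n)$ winding per unit $\mu$.
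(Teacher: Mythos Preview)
This theorem is not proved in the present paper: it is quoted as background from \cite{BVBV}, so there is no ``paper's own proof'' to compare against. Your sketch is a faithful outline of the original argument in \cite{BVBV} --- the Dumitriu--Edelman tridiagonal model, the Pr\"ufer/phase recursion and Sturm oscillation count, the diffusion limit of the phase to the Brownian carousel SDE, and the passage from joint convergence of phases at finitely many $\mu$ to point-process convergence via monotonicity and tightness --- so as a plan it is correct and matches the cited source. One small caveat: the explicit hyperbolic-plane picture you invoke (the state vector ``performs hyperbolic Brownian motion on $\HH$'') is more the language of the later paper \cite{BVBV_sbo}; in \cite{BVBV} the limiting object is presented directly as a coupled system of circle-valued SDEs, and the hyperbolic geometry is implicit rather than foregrounded.
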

\begin{theorem}[\cite{KS}, \cite{Nakano}, \cite{BVBV_sbo}]\label{thm:cbeta}
Fix $\beta>0$ and  let $\Lambda_n^{\textup{c}}$ be a finite point process with density (\ref{cbetadens}). Then $n \Lambda_n^{\text{c}}$ converges in distribution to the point process $\Sineb$.
\end{theorem}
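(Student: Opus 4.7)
My plan is to prove Theorem \ref{thm:cbeta} by pushing through the operator framework the paper has just set up. The two inputs are Theorem \ref{thm:circop}, which realizes the scaled circular $\beta$-ensemble $n\Lambda_n^{\textup{c}}$ (periodically lifted to $\R$) as the spectrum of $\Circop=\Dirop(\gamma_n,\eta_0,\eta_1)$ for a hyperbolic random walk $\gamma_n$ on $\HH$, and Theorem \ref{thm:sbo}, which realizes $\Sineb$ as the spectrum of $\Sineop=\Dirop(\gamma,\eta_0,\eta_1)$ for the hyperbolic Brownian path $\gamma(t)=\cB(-\tfrac{4}{\beta}\log(1-t))$. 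The task thus reduces to showing that $\Circop \to \Sineop$ in a sense strong enough to produce point-process convergence of the spectra.

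The first step is a hyperbolic Donsker invariance principle. The step distribution of $\gamma_n$ is chosen so that, after the time parametrization $t\mapsto \gamma_n(t)$ on $[0,1)$, the drift and covariance of the increments match those of hyperbolic Brownian motion under the logarithmic time change. Standard invariance-principle arguments, applied in the Riemannian manifold $\HH$, then give $\gamma_n \cd \gamma$ uniformly on compact subsets of $[0,1)$. Using Skorokhod representation I would pass to a probability space where this convergence is almost sure.

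The second step is to lift path convergence to resolvent convergence. By Theorem \ref{thm:Dirop}, under the horocyclic integrability condition (\ref{horo_int1}), the operator $\res \Dirop(\gamma,\eta_0,\eta_1)$ is an integral operator whose kernel (\ref{HSkernel}) is a continuous local functional of $\gamma$ through the columns $a(s)=X(s)u_0$ and $c(s)=X(s)u_1$. Truncating the paths to any compact interval $[0,1-\eps]$ turns the Hilbert--Schmidt norm of the difference of kernels into a continuous functional of the path in the uniform topology, so the truncated resolvents converge in Hilbert--Schmidt norm. This yields norm-resolvent convergence of the corresponding self-adjoint operators, and hence convergence of the spectra as point processes on compacts.

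The main obstacle is controlling the tail $t\uparrow 1$, uniformly in $n$. One needs to show that for fixed $\eps>0$ the contribution to $\|\res \Circop - \res \Sineop\|_{\HS}^2$ coming from $s\in [1-\eps,1)$ can be made arbitrarily small uniformly in $n$, which is a uniform version of (\ref{horo_int1}) along the sequence $\gamma_n$. By the horocyclic formulas (\ref{horo}), this reduces to a uniform linear rate of escape of $\gamma_n$ to $\partial \HH$ matching the a.s.\ linear escape of $\cB$; such tail estimates follow from a Borel--Cantelli argument applied to the hyperbolic distances $d_\HH(\gamma_n(t),i)$ along a dyadic grid in $t$. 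Combined with the compact-interval convergence above, this tightness step completes the proof of Theorem \ref{thm:cbeta}. The quantitative sharpening of this same tail analysis, executed in Sections \ref{s:pathcmp}--\ref{s:proof}, is what gives the stronger Theorem \ref{thm:main} with explicit rate $\log^6 n/n$.
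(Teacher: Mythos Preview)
First, note that the paper does not give its own proof of Theorem~\ref{thm:cbeta}: it is stated as a known result from \cite{KS}, \cite{Nakano}, \cite{BVBV_sbo}. What the paper proves is the quantitative Theorem~\ref{thm:main}, which of course implies Theorem~\ref{thm:cbeta}. Your outline is, as you say yourself in the last paragraph, a qualitative reading of that proof, and the three-step structure (path coupling, truncated resolvent convergence, uniform tail control) is exactly the decomposition used in Sections~\ref{s:path}--\ref{s:proof}.

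There is, however, a genuine gap in your version that the paper's coupling is specifically designed to avoid. The operator $\Circop$ is $\Dirop(\tilde \cB_n,\eta_0,\eta_1^{(n)})$ with $\eta_1^{(n)}=b_n\in\partial\HH$, the random boundary endpoint of the walk, while $\Sineop$ uses $\eta_1=\cB(\infty)$. The Hilbert--Schmidt kernel \eqref{HSkernel} depends on $\eta_1$ through $c(s)=X(s)u_1$; hence the difference $\res\Circop-\res\Sineop$ contains a term coming from $u_1^{(n)}-u_1$ in addition to the path difference $X_n-X$. A Donsker/Skorokhod coupling giving convergence only on compact subsets of $[0,1)$ says nothing about convergence of the boundary endpoints, so your truncation-and-tightness argument as written does not control this term. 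The paper sidesteps the issue entirely: the stopping-time construction of Proposition~\ref{prop:stoptimes} has $\tau_{n,0}=\infty$, so $b_n=\cB(\tau_{n,0})=\cB(\infty)=\eta_1$ exactly for every $n$, and only the path difference remains (this is what makes Proposition~\ref{prop:HS3} sufficient). If you want to run a generic invariance-principle argument, you must separately prove $\eta_1^{(n)}\to\eta_1$ and estimate the resulting kernel perturbation; this requires control of the walk all the way to the boundary, not just on compacts.

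A smaller point: the steps of the walk are not i.i.d.\ --- the $k$th step has $\zeta_k\sim\mathrm{Beta}(1,\tfrac{\beta}{2}(n-k-1))$ --- so ``standard invariance-principle arguments'' needs to be a Lindeberg-type statement, not Donsker's theorem off the shelf.
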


In \cite{BVBV_sbo} the authors constructed random Dirac operators with spectrum given by $\Sineb$ and the finite circular beta ensemble. Recall that the standard hyperbolic Brownian motion $x+i y$ in the upper half plane representation started from $i$ is the solution of the SDE
\[
d(x+i y)=y(dB_1+i dB_2), \qquad x(0)+i y(0)=i,
\]
where $B_1, B_2$ are independent standard real Brownian motions.

\begin{theorem}[\cite{BVBV_sbo}]\label{thm:sbo}
Fix $\beta>0$ and let $\cB$ be a  standard  hyperbolic Brownian motion in the upper half plane  started from $\cB(0)=i$. Set $\tilde \cB(t)=\cB(-\frac{4}{\beta}\log(1-t)), t\in [0,1)$, $\eta_0=\infty$ and $\eta_1=\lim_{t\to \infty} \cB(t)$. Then the operator
\[
\Sineop=\Dirop(\tilde \cB(t),  \eta_0, \eta_1)
\]
is a.s.~self-adjoint with a Hilbert-Schmidt inverse and $\spec(\Sineop)\ed\Sineb$.
\end{theorem}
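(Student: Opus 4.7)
The plan has two components corresponding to the two assertions of the theorem. For the self-adjointness and Hilbert--Schmidt inverse, I would invoke Theorem~\ref{thm:Dirop} with $\gamma=\tilde\cB$ and reference point $\xi=i$; everything reduces to verifying the two horocyclic integrability conditions \eqref{horo_int1}. For the spectral equality $\spec(\Sineop)\ed\Sineb$, I would pass to the counting function of the eigenvalues, express it via the transfer matrix flow for the eigenvalue ODE $\tau f=\lambda f$, and identify the resulting boundary-point evolution under $\SU$ with the Brownian carousel SDE from \cite{BVBV}, which characterizes $\Sineb$.

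For the integrability, I would apply It\^o's formula to $\log\textup{Im}\,\cB(t)$: since $d(x+iy)=y(dB_1+i\,dB_2)$, one gets $\log\textup{Im}\,\cB(t)=B_2(t)-t/2$. Precomposing with $t\mapsto -\tfrac{4}{\beta}\log(1-t)$ and using \eqref{horo} yields
$$e^{d_\infty(\tilde\cB(t),i)} = (1-t)^{-2/\beta}\,e^{-B_2(-\tfrac{4}{\beta}\log(1-t))}.$$
The exit point $\eta_1\in\R$ exists a.s., and a M\"obius change of coordinates sending $\eta_1\mapsto\infty$ together with an $h$-transform argument shows that $d_{\eta_1}(\tilde\cB(t),i)\sim -\tfrac{2}{\beta}\log(1/(1-t))$ with at most iterated-logarithm corrections. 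Hence the first integral in \eqref{horo_int1} is dominated by $\int_0^1(1-t)^{2/\beta}\,dt<\infty$. In the second integral the two exponents cancel to leading order, leaving an integrand bounded by a power of $\log(1/(1-s))$, which is still integrable on $[0,1)$. The law of the iterated logarithm for $B_2$ promotes these deterministic asymptotics to a.s.~bounds.

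For the spectral identification, the eigenvalue equation $\tau f=\lambda f$ reads $f'=-\lambda J R f$. Using $R=\tfrac12 X^tX$ and setting $g=Xf$, the transfer matrix $T_\lambda(t)\in\SL$ acts on $\partial\HH$ by M\"obius transformations. Tracking the image of $\mathcal P u_0$ under this flow yields an SDE on $\partial\HH$; after substituting the time-changed hyperbolic Brownian driver $\tilde\cB$ and expanding the infinitesimal generator in the Iwasawa frame, the drift and diffusion coefficients coincide with those of the carousel phase SDE of \cite{BVBV}. The boundary conditions $v(0)^tJu_0=0$ and $\lim v^tJu_1=0$ translate into the initial and terminal pins of the carousel. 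The counting function of eigenvalues of $\tau$ on $[0,\lambda]$ equals the winding number of the transported boundary point, which by construction is the $\Sineb$ counting function.

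The hard part is the second half. Matching the two descriptions requires picking the correct $\SU$-gauge, translating $u_0,u_1$ into the initial and terminal data of the carousel, and aligning orientation and parametrization conventions so that the counting functions agree in distribution without an unwanted offset. By contrast, the self-adjointness part is essentially a Brownian drift computation once Theorem~\ref{thm:Dirop} is granted.
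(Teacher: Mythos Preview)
This theorem is not proved in the present paper: it is quoted verbatim from \cite{BVBV_sbo} and used as input. So there is no ``paper's own proof'' to compare your proposal against. That said, two remarks are in order.

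For the self-adjointness/Hilbert--Schmidt part, the paper does contain an independent route: Proposition~\ref{prop:HS1} gives a deterministic criterion (the path stays within $b+\eps t$ of a speed-$\alpha$ geodesic toward $\eta_1$), and Lemma~\ref{lem:hypBMspeed} shows hyperbolic Brownian motion satisfies this with $\alpha=1/2$ and any $\eps>0$. This is cleaner than your direct It\^o computation because it separates the probabilistic input (sublinear deviation from the limiting geodesic) from the analytic verification of \eqref{horo_int1}. Your sentence ``the two exponents cancel to leading order, leaving an integrand bounded by a power of $\log(1/(1-s))$'' is not quite right: with the correct reading of \eqref{horo_int1} (the second horocycle should be evaluated at $\gamma(t)$, as in the proof of Proposition~\ref{prop:HS1}), the inner $t$-integral contributes a full factor $(1-s)^{1+2/\beta}$, and it is this extra power of $(1-s)$, not a log, that makes the double integral finite. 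The LIL alone would not save you if the drifts merely cancelled.

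For the spectral identification $\spec(\Sineop)\ed\Sineb$, your outline (transfer-matrix flow, $\SU$ action on $\partial\HH$, matching with the carousel SDE of \cite{BVBV}) is indeed the strategy of \cite{BVBV_sbo}, but this is the substantial part of that paper and your paragraph is a summary, not a proof. The delicate issues you flag---choosing the gauge, translating the boundary conditions $u_0,u_1$ into the carousel's initial and terminal data, and matching winding conventions---are exactly where the work lies, and none of it appears in the paper under review.
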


The random Dirac operator for the finite circular beta ensemble is defined using a random walk in $\HH$.
Fix $n\ge 1$ and let $\zeta_0, \dots, \zeta_{n-2}$ be independent with $\zeta_k\sim$ {Beta}$(1,\tfrac{\beta}{2}(n-k-1))$. \footnote{
A random variable has distribution Beta$(a,b)$ with $a,b>0$ if it has density $\frac{x^{a-1}(1-x)^{b-1}}{\Gamma(a)\Gamma(b)} \ind(x\in [0,1])$.}
Set $Y_k=\log(\frac{1+\sqrt{\zeta_k}}{1-\sqrt{\zeta_k}})$. We define the random walk $b_0, b_1, \dots, b_{n-1}$ in $\HH$ with a final boundary point $b_{n}\in \partial \HH$. We set $b_0=i$, and for $0\le k\le n-2$ we choose $b_{k+1}$ uniformly among the points in $\HH$ with hyperbolic distance $Y_k$ from $b_k$, independently of the previous choices. The final point $b_n$ is chosen uniformly on the boundary $\partial \HH$ as viewed from $b_{n-1}$, independently of the previous choices. Note that $\zeta_k\sim $ {Beta}$(1,\tfrac{\beta}{2}(n-k-1))$ is the squared Euclidean norm of the random walk step in the Poincare disk model with $b_k$ at the origin.
%
%
%
%
%
%

%
%
\begin{theorem}[\cite{BVBV_sbo}]\label{thm:circop}
Consider the random walk $b_0, b_1, \dots, b_n$ defined above. Set $\eta_0=\infty$, $\eta_1=b_n$ and $\cB_n(t)=b_{\lfloor n t\rfloor}$ for $t\in[0,1)$. The operator
\[
\Circop=\Dirop(\cB_n(t),  \eta_0, \eta_1)
\]
is a.s.~self-adjoint with a Hilbert-Schmidt inverse and $\spec(\Circop)\ed n \Lambda_n^{\textup{c}}+2\pi \ZZ$ where $\Lambda_n^{\textup{c}}$ is the finite point process with density (\ref{cbetadens}).
\end{theorem}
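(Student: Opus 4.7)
The plan is to reduce the spectral problem for $\Circop$ to a finite product of transfer matrices and to match that product with the Szeg\H{o} recursion for the CMV matrix model of the circular $\beta$ ensemble due to Killip and Nenciu.

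First I would solve $\Dirop v = \lambda v$ by direct integration. Since $\cB_n$, and hence $R$, are piecewise constant, the equation $v' = -\lambda J R v$ has constant coefficients on each interval $[k/n,(k+1)/n)$, and its flow there is the explicit matrix $M_k(\lambda) = \exp(-\tfrac{\lambda}{n} J R_k)$ with $R_k = R(b_k)$. Rewriting the boundary conditions $v(0)^t J u_0 = 0$ and $\lim_{t\to 1} v(t)^t J u_1 = 0$ in projective language, $\lambda \in \spec(\Circop)$ iff $M_{n-1}(\lambda)\cdots M_0(\lambda) u_0$ is parallel to $u_1$, where $u_0$ represents $\eta_0=\infty$ and $u_1$ represents $\eta_1 = b_n$.

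A direct computation gives $\tr(JR)=0$ and $\det(JR)=\tfrac14$, so each $M_k(\lambda)$ is an elliptic element of $\SL$ acting on $\HH$ as a hyperbolic rotation by angle $\lambda/n$ around $b_k$. Under the isomorphism $\SL \cong \SU$ and after recentering the Poincar\'e disk at $b_k$ via an isometry, $M_k(\lambda)$ factors as a rotation depending only on $\lambda/n$ composed with a boost that sends $0$ to $\alpha_k \in \mathbb{D}$, where $\alpha_k$ is the image of $b_{k+1}$ in the disk centered at $b_k$. By construction of the random walk, $|\alpha_k|^2 = \zeta_k \sim \mathrm{Beta}(1,\tfrac{\beta}{2}(n-k-1))$ with $\arg \alpha_k$ independent and uniform on $[0,2\pi)$, which are exactly the Verblunsky coefficients of the Killip--Nenciu ensemble for the circular $\beta$ distribution. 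The uniform final point $b_n \in \partial \HH$ corresponds to a boundary Verblunsky coefficient $\alpha_{n-1}$ of modulus one with uniform phase.

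The product $M_{n-1}(\lambda)\cdots M_0(\lambda)$, written in this basis and specialised to $z = e^{i\lambda/n}$, then coincides, up to standard normalization, with the product of Szeg\H{o} transfer matrices at $z$, and the condition that $u_0$ be mapped into the line through $u_1$ becomes the condition that $z = e^{i\lambda/n}$ be an eigenvalue of the CMV matrix built from $\{\alpha_k\}$. Since those eigenvalues have the law $\{e^{i\mu_j}\}$ with $(\mu_j)$ a realisation of the circular $\beta$ ensemble of size $n$, this gives $\spec(\Circop) \ed n\Lambda_n^{\mathrm c} + 2\pi \ZZ$. The main obstacle is the bookkeeping in the middle step: one must choose the recentering isometries and bases consistently across $k$ so that the per-step factorisations of $M_k(\lambda)$ compose into a single Szeg\H{o} recursion with the correct $\lambda$-dependence, and verify that the uniform direction of each hyperbolic step really becomes the uniform phase of $\alpha_k$, including the degenerate final step where $|\alpha_{n-1}|=1$.
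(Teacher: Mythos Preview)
This theorem is not proved in the present paper; it is quoted from \cite{BVBV_sbo} as background for the operator framework, so there is no proof here to compare your proposal against.

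That said, your outline is essentially the strategy used in \cite{BVBV_sbo}. On a piecewise constant path the Dirac eigenvalue equation $Jv'=\lambda R v$ reduces to a product of transfer matrices $M_k(\lambda)=\exp(-\tfrac{\lambda}{n}JR_k)$; your computation $\tr(JR_k)=0$, $\det(JR_k)=\tfrac14$ is correct, and each $M_k(\lambda)$ is indeed the hyperbolic rotation about $b_k$ by angle $\lambda/n$. The spectral condition then becomes a projective alignment condition on the product, and after passing to the $\SU$ picture this matches the Szeg\H{o} recursion with Verblunsky coefficients having the Killip--Nenciu distribution. You have correctly flagged the genuine technical point: the recentering isometries must be chosen consistently so that the per-step factorizations telescope into a single Szeg\H{o}-type recursion in the variable $z=e^{i\lambda/n}$, and so that the rotational symmetry of each step yields a uniform phase for $\alpha_k$. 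One item you did not address is the self-adjointness and Hilbert--Schmidt inverse; but since $\cB_n$ takes only finitely many values in $\HH$, the integrands in \eqref{horo_int1} are bounded on $[0,1)$ and both conditions of Theorem~\ref{thm:Dirop} hold trivially.
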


Theorems \ref{thm:sbo} and \ref{thm:circop} imply that almost surely zero is not an eigenvalue for  the operators $\Sineop$ and $\Circop$.

\subsection{Coupling $\Circop$ and $\Sineop$ }

Fix $\beta>0$. Let $\cB$ a hyperbolic Brownian motion, set $\eta_0=\infty$, $\eta_1=\cB(\infty)$ and  $\Sineop=\Dirop(\cB(-\frac{4}{\beta}\log(1-t)),\eta_0,\eta_1)$.

Let $U$ be a uniform random variable on $[0,1]$ independent from $\cB$, and let $\FF_t, t\ge 0$ be the natural filtration of $\cB$ enlarged with $U$. Theorem \ref{thm:main} gives the existence of an array of stopping times $\tau_{n,k}$ with respect to $\FF_t$ so that for each $n\ge 1$ the random variables $\cB(\tau_{n,n}),\cB(\tau_{n,n-1}), \dots, \cB(\tau_{n,0})$ have the same joint distribution  as the random walk $b_0, \dots, b_n$ from Theorem \ref{thm:circop}. Setting  $\Circop=\Dirop(\tilde \cB_n(t), \eta_0, \eta_1)$  with $\tilde \cB_n(t)=\cB(\tau_{n,\lceil (1-t)n \rceil}), t\in [0,1)$ gives the appropriate coupling of $\Sineop$ and the sequence $\{\Circop\}_{ n\ge 1}$.

\section{Coupling construction}

\subsection{A heat kernel bound on the hyperbolic plane}\label{s:1cpl}

Our coupling relies on a careful estimate of the transition density for hyperbolic Brownian motion. Although there are a number of existing  bounds in the literature (see e.g.~\cite{DM}), we could not find one that would be strong enough for our purposes.
We show that the distribution of the distance of hyperbolic Brownian motion at time $t\le 1$  from its starting point  can be very precisely estimated in terms of a \textup{Beta}$(1,\tfrac2{t}-1/2)$ random variable.

\begin{lemma}\label{lem:betacpl}
Let $\cB(t)$ be standard hyperbolic Brownian motion and let $t\in(0,1]$. Let $Y=\log(\frac{1+\sqrt{\xi}}{1-\sqrt{\xi}})$ where $\xi$ has distribution \textup{Beta}$(1,\tfrac2{t}-1/2)$ and set $\zeta=d_\HH(\cB(0),\cB(t))$.
Then the following statements hold:
\begin{enumerate}[(a)]
\item \label{lem:dom} $P(Y>r)\ge P(\zeta>r) $ for all $r>0$, in other words $Y$ stochastically dominates $\zeta$.
\item \label{lem:TV} The total variation distance of $\zeta$ and $Y$ is bounded by $\tfrac32 t$.
\end{enumerate}
\end{lemma}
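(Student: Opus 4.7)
The argument reduces to a one-dimensional density comparison. Working in the Poincar\'e disk model with $\cB(0)=0$, the hyperbolic distance from the origin to a point at Euclidean distance $r$ equals $2\operatorname{arctanh}(r)=\log\frac{1+r}{1-r}$. Hence both $\zeta$ and $Y$ are the same strictly increasing function applied respectively to $X:=|\cB(t)|^2$ and to $\xi\sim\textup{Beta}(1,\tfrac{2}{t}-\tfrac12)$, and monotone pushforwards preserve both stochastic dominance and total variation distance. So it suffices to compare the laws of $X$ and $\xi$ on $[0,1)$.

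To extract the law of $X$, I would use It\^o's formula. The disk-model SDE $dz=\tfrac{1-|z|^2}{2}(dB_1+i\,dB_2)$ yields
\begin{align*}
dX \;=\; \tfrac{(1-X)^2}{2}\,dt \,+\, (1-X)\sqrt{X}\,dW,
\end{align*}
and the substitution $\phi=-\log(1-X)$ straightens it out to $d\phi=\tfrac12\,dt+\sqrt{1-e^{-\phi}}\,dW$. Solving Fokker--Planck on either form, or equivalently invoking the classical McKean heat kernel for the hyperbolic plane together with the change of variable $\rho=2\operatorname{arctanh}\sqrt{X}$, produces an explicit density of the shape
\begin{align*}
f_t(x)\;=\;\alpha\,(1-x)^{\alpha-1}\,h_t(x),\qquad\alpha=\tfrac{2}{t}-\tfrac12,\quad x\in[0,1),
\end{align*}
where the correction factor $h_t\colon[0,1)\to(0,\infty)$ is smooth, tends to $1$ uniformly as $t\to0$, and admits a controlled expansion $h_t(x)=1+t\,\varrho_t(x)$ with $\varrho_t$ bounded on $[0,1)$.

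For the stochastic dominance in (a), I would show that $h_t$ is monotone and crosses the value $1$ exactly once, so that $f_t$ and the Beta density $g(x)=\alpha(1-x)^{\alpha-1}$ meet at a single point $x^\ast\in(0,1)$ with $g\ge f_t$ to the right of $x^\ast$. Equal total mass then forces the two CDFs to cross exactly once, giving $P(X>r)\le P(\xi>r)$ and hence (a) after pushing forward by the monotone map. Granted (a), one has
\begin{align*}
d_{TV}(X,\xi)\;=\;\int_0^1(g-f_t)^+\,dx\;=\;P(\xi>x^\ast)-P(X>x^\ast),
\end{align*}
which I would evaluate by substituting the expansion $h_t=1+t\,\varrho_t$ into the explicit densities and integrating.

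\textbf{Main obstacle.} The technical heart of the argument is constructing $h_t$ in a form that \emph{simultaneously} makes the single-crossing property transparent for (a) and yields the sharp constant $\tfrac32$ in (b). The McKean integral is unwieldy and the quantitative control over the error term $\varrho_t$ requires a careful expansion near $t=0$. An alternative I would keep in reserve is a direct SDE comparison for the $\phi$-process, exploiting that its drift is the constant $\tfrac12$ and its diffusion coefficient $\sqrt{1-e^{-\phi}}\le 1$; this should give (a) cheaply but is unlikely to yield the sharp TV constant without further work.
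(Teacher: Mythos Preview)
Your reduction to comparing $X=|\cB(t)|^2$ with $\xi\sim\textup{Beta}(1,\alpha)$ via a strictly monotone pushforward is correct, and the single-crossing strategy for (a) is logically sound. But the obstacle you flag is the whole proof: you have not shown that $h_t$ crosses $1$ exactly once, nor produced the expansion $h_t=1+t\varrho_t$ with enough control to get the constant $\tfrac32$. As written this is a plan with the technical core missing. The paper never factors the density as $\alpha(1-x)^{\alpha-1}h_t(x)$, never proves or uses a single-crossing property, and gets both parts by a more direct route.

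The paper works in the radial variable $r=d_\HH(\cB(0),\cB(t))$ with the explicit McKean integral for $p_\zeta(r,t)$. The only analytic input is the elementary two-sided bound $\tfrac{1}{\sqrt{2}}\sqrt{s^2-r^2}\le\sqrt{\cosh s-\cosh r}\le\tfrac{1}{\sqrt{2}}\sqrt{s^2-r^2}\,e^{(r^2+s^2)/24}$, which, plugged into the McKean integral, produces closed-form envelopes $p_-\le p_\zeta\le p_+$ and a tail bound on $1-F_\zeta$. For (a) the argument splits at $r=t$: for $r\ge t$ one compares the tails $1-F_\zeta$ and $1-F_Y$ directly using $\log\cosh(x)\le x^2/2$; for $0<r<t$ one checks the \emph{pointwise} inequality $p_-(r,t)\ge p_Y(r)$ by hand, which is stronger than what a crossing argument would give on that range. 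For (b) the integral $\int|p_Y-p_\zeta|$ is cut at $K=2\sqrt{t\log(2/t)}$: the piece on $[K,\infty)$ is at most $2(1-F_Y(K))\le t$ by (a), and on $[0,K]$ one writes $|p_Y-p_\zeta|\le|p_Y-p_+|+(p_+-p_\zeta)$ and integrates each term explicitly, obtaining $\tfrac{29}{20}t$ and $\tfrac{t}{2}$. The constant $\tfrac32$ thus comes from summing concrete integrals, not from a first-order expansion of a correction factor. Your SDE-comparison backup idea for (a) is closer in spirit to the paper's $r<t$ density bound, but as you anticipated it would not by itself deliver the TV constant.
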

The proof of the lemma relies on a precise analysis of the explicit formula for the transition density. 
We leave it for Section \ref{s:calc}    in the Appendix.

\subsection{Single step coupling}\label{s:1step}

We first concentrate on a single step in the hyperbolic random walk corresponding  to $\Circop$ and couple it to the hyperbolic Brownian motion.

\begin{proposition}\label{prop:1step}
Fix $\gamma \ge 3/2$. Let $\cB$ be hyperbolic Brownian motion and $U$ an independent uniform random variable on $[0,1]$.  Let $\FF_t, t\ge 0$ be the filtration of $\cB$ enlarged with $U$.  Consider a Poincar\'e disk representation of the hyperbolic plane where $\cB(0)=0$.

There exists a finite random variable $\sigma>0$ so that the following hold:
\begin{enumerate}
\item $\sigma$ is a stopping time with respect to $\FF_t$.
\item $\cB(\sigma)$ has rotationally invariant distribution and $|\cB(\sigma)|^2$ has \textup{Beta}$(1,\gamma)$ distribution.
\item  $P(\sigma\ge \frac{4}{2 \gamma +1})=1$ and $P(\sigma\neq \frac{4}{2 \gamma +1}) \le \tfrac{3}{\gamma}$.
\item For $r\ge 8$ we have $P(\sigma > r/\gamma) \le 3e^{-\frac{1}{5}r^{1/3}}$. 
\end{enumerate}
\end{proposition}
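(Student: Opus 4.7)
The plan is to construct $\sigma$ as a hybrid: stop at a deterministic time $t$ with high probability, and otherwise wait for the hyperbolic distance from the origin to reach a prescribed random target. Set $t = 4/(2\gamma+1)$, so that $2/t - 1/2 = \gamma$ matches the second Beta parameter required of the target. Lemma \ref{lem:betacpl} at this $t$ then couples $\zeta := d_\HH(\cB(0), \cB(t))$ with a random variable $Y$ distributed as $\log\frac{1+\sqrt{\xi}}{1-\sqrt{\xi}}$ for $\xi \sim \textup{Beta}(1,\gamma)$, so that $Y \geq \zeta$ pointwise and $P(Y \neq \zeta) \leq 3t/2 < 3/\gamma$. (The lemma directly supplies stochastic dominance and the total variation estimate; a short argument combining the two produces a single coupling with both properties simultaneously.) Using the independent uniform $U$ to sample this coupling conditionally on $\zeta$ makes $Y$ an $\FF_t$-measurable function. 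Then define $\sigma = t$ if $Y = \zeta$, and otherwise $\sigma = \inf\{s \geq t : d_\HH(\cB(0), \cB(s)) \geq Y\}$; this is a.s.\ finite since $d_\HH(\cB(0), \cB(s)) \to \infty$.

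Properties (1)--(3) follow by construction: $\sigma$ is an $\FF_t$-stopping time; $\sigma \geq t = 4/(2\gamma+1)$ always, while $P(\sigma \neq t) = P(Y \neq \zeta) \leq 3/\gamma$; the rotational invariance of $\cB(\sigma)$ is inherited from the rotational symmetry of hyperbolic BM started at $\cB(0) = 0$ together with the fact that $\sigma$ is a function of the radial process and the independent $U$; and the Poincar\'e-disk identity $|z|_{\textup{Eucl}} = \tanh(d_\HH(0,z)/2)$ yields $|\cB(\sigma)|^2 = \tanh^2(Y/2) = \xi \sim \textup{Beta}(1,\gamma)$.

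The tail estimate (4) is the heart of the argument. Writing $\rho(s) = d_\HH(\cB(0), \cB(s))$, the definition of $\sigma$ and continuity of $\rho$ give the inclusion $\{\sigma > r/\gamma\} \subseteq \{\rho(r/\gamma) < Y\}$ whenever $r/\gamma \geq t$, which is true for $r \geq 8$. Inserting any deterministic threshold $A \geq 0$,
\[
P(\sigma > r/\gamma) \;\leq\; P(Y > A) + P(\rho(r/\gamma) < A).
\]
The first term equals $\sech^{2\gamma}(A/2)$, which is at most $\sech^{3}(A/2) \leq 8\,e^{-3A/2}$ uniformly in $\gamma \geq 3/2$ (using $\sech \leq 1$ and $\sech(x) \leq 2 e^{-x}$). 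For the second term, the radial SDE $d\rho = \tfrac12 \coth(\rho)\,ds + dW$ together with $\coth(\rho) \geq 1$ yields the pathwise lower bound $\rho(s) \geq s/2 + W(s)$, so $P(\rho(s) < A) \leq P(W(s) < A - s/2) \leq \exp(-(s/2-A)^2/(2s))$ whenever $A \leq s/2$.

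The main obstacle is to choose $A$ so that the two bounds combine to at most $3\,e^{-r^{1/3}/5}$ uniformly in $\gamma \geq 3/2$. With $A = r^{1/3}$ and $s = r/\gamma$, the radial exponent is of order $r/(8\gamma)$ and beats $r^{1/3}/5$ precisely when $r \gtrsim \gamma^{3/2}$; in this range both contributions decay at rate $e^{-r^{1/3}/5}$. In the complementary regime $r \lesssim \gamma^{3/2}$, the time $r/\gamma$ is short but $\gamma$ is correspondingly large. There the trivial bound $P(\sigma > r/\gamma) \leq P(\sigma \neq t) \leq 3/\gamma$ already suffices once $\gamma \geq e^{r^{1/3}/5}$, while the remaining finite ``middle'' window is closed off by combining the sharper small-$A$ tail $\sech^{2\gamma}(A/2) \leq e^{-\gamma A^2/4}$ with a smaller threshold $A$ adapted to the Rayleigh-type distribution of $\rho$ at small times. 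The exponent $r^{1/3}$ emerges as the natural balance between a linear-in-$A$ decay of $P(Y > A)$ and an $r/\gamma$-scale Gaussian decay of $P(\rho(r/\gamma) < A)$.
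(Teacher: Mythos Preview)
Your construction of $\sigma$ and the verification of (1)--(3) are essentially identical to the paper's. The gap is in part (4).

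Your decomposition
\[
P(\sigma > r/\gamma)\ \le\ P(Y>A)\ +\ P\big(\rho(r/\gamma)<A\big)
\]
uses only the endpoint $\rho(r/\gamma)$, and this is where the argument breaks. For any fixed time $s>0$ the radial value $\rho(s)$ has density $p_\zeta(r,s)\sim c\,r$ near $r=0$, so $P(\rho(s)<A)\asymp A^2$ as $A\downarrow 0$: the small-value tail of the endpoint is only \emph{polynomial}, never exponential. Consequently there is no choice of $A$ that makes both $P(\rho(r/\gamma)<A)$ and $P(Y>A)$ of order $e^{-r^{1/3}/5}$ uniformly in $\gamma$. Concretely, take $r=\gamma$ so that $r/\gamma=1$: to get $P(Y>A)=\sech^{2\gamma}(A/2)\le e^{-\gamma^{1/3}/5}$ you need $\gamma A^2\gtrsim \gamma^{1/3}$, i.e.\ $A\gtrsim \gamma^{-1/3}$; but then $P(\rho(1)<A)\gtrsim A^2\gtrsim\gamma^{-2/3}$, which is far larger than $e^{-\gamma^{1/3}/5}$ for large $\gamma$. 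Your ``middle window'' is therefore not finite in any uniform sense, and the Rayleigh heuristic you sketch cannot close it.

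The paper's remedy is two-fold. First, it uses that $\{\sigma>r/\gamma\}$ actually forces $\max_{t\le s\le r/\gamma}\rho(s)<Y$, not just $\rho(r/\gamma)<Y$; the running maximum has an \emph{exponential} confinement estimate
\[
P\Big(\max_{0\le s\le T}\rho(s)\le a\Big)\ \le\ \tfrac{4}{\pi}\,e^{-\pi^2 T/(8a^2)},
\]
inherited from the one-dimensional Brownian comparison. Second, it chooses the threshold $A=r^{1/3}/\sqrt{\gamma}$ rather than $r^{1/3}$; with this scaling the exponent $T/(8a^2)$ becomes $\asymp r^{1/3}$ \emph{uniformly in $\gamma$}, and the Beta tail $P(Y>r^{1/3}/\sqrt{\gamma})$ is handled via $\sech^{2\gamma}(u/2)\le e^{-\gamma u^2/6}$ in the small-$u$ regime, again giving a $\gamma$-free exponent. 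Your drift bound $\rho(s)\ge s/2+W(s)$ is correct but only useful once $s=r/\gamma$ is large; the confinement bound for the maximum is what carries the whole range.
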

The proof of the proposition will rely on Lemma \ref{lem:betacpl} and the lemma below,  to be proved in the Appendix.
%
%
%

\begin{lemma}\label{lem:stdom1}
Assume that $X_1$ and $X_2$ are random variables so that $X_2$ stochastically dominates $X_1$  and the  total variation distance of their distributions is $\eps$. Then there exists a measurable function $g:\R^2\to \R$ so that if $U$ is a uniform random variable on $[0,1]$,  independent of $X_1$ then the following hold:
\begin{enumerate}[(a)]
\item $g(X_1,U)$ has the same distribution as $X_2$.

\item $P(X_1\le g(X_1,U))=1.$

\item $P(X_1=g(X_1,U))=1-\eps$.

\end{enumerate}
\end{lemma}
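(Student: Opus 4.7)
The plan is to combine a maximal coupling of $X_1$ and $X_2$, which is responsible for property (c), with a monotone coupling on the residual mass, which takes care of the dominance property (b).

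First, I would decompose the laws as $P_i = \mu + \nu_i$ for $i=1,2$, where $\mu = P_1 \wedge P_2$ is the common part (constructed via Lebesgue decomposition against a dominating reference measure) and $\nu_1, \nu_2$ are mutually singular. Standard total-variation identities give $\mu(\R) = 1-\eps$ and $\nu_i(\R) = \eps$. Subtracting $\mu((-\infty,x])$ from both sides of the stochastic dominance inequality $F_2(x) \le F_1(x)$ yields $\nu_2((-\infty,x]) \le \nu_1((-\infty,x])$, so after normalization $\nu_2/\eps$ stochastically dominates $\nu_1/\eps$.

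Next I would define $g$ in two pieces using the density $h(x) = \frac{d\mu}{dP_1}(x) \in [0,1]$. On the event $\{U < h(X_1)\}$ I set $g(X_1,U)=X_1$, which contributes exactly $\mu$ to the joint law on the diagonal. On the event $\{U \ge h(X_1)\}$, the conditional law of $X_1$ is $\nu_1/\eps$ and the rescaled variable $V=(U-h(X_1))/(1-h(X_1))$ is uniform on $[0,1]$ and conditionally independent of $X_1$; I would feed $(X_1,V)$ into the standard monotone coupling of $\nu_1/\eps$ and $\nu_2/\eps$ by mapping $X_1$ to a point $W$ uniformly distributed on $[G_1(X_1^-), G_1(X_1)]$ using $V$, then outputting $G_2^{-1}(W)$, where $G_i$ denotes the CDF of $\nu_i/\eps$ and $G_2^{-1}$ is the right-continuous inverse.

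Verification of the three properties is then routine. Property (c) follows from $P(U < h(X_1)) = \ev h(X_1) = \mu(\R) = 1-\eps$. Property (a) follows by checking that the off-diagonal part contributes exactly $\nu_2$ to the output marginal, so $g(X_1,U)$ has law $\mu+\nu_2 = P_2$. Property (b) holds because $G_2 \le G_1$ forces $G_2^{-1}(W) \ge X_1$ whenever $W \in [G_1(X_1^-), G_1(X_1)]$. The main obstacle is the off-diagonal argument in the presence of atoms of $\nu_1$ or $\nu_2$: the internal randomization by $V$ inside the pre-image of the generalized inverse is precisely what is needed for $G_2^{-1}(W)$ to have marginal $\nu_2/\eps$ rather than merely to be stochastically bounded by it, and this is the step I would spend the most care on.
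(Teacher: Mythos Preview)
Your proposal is correct and follows essentially the same approach as the paper: decompose each law into the common part $\mu=P_1\wedge P_2$ and mutually singular residuals $\nu_1,\nu_2$, couple by the identity on the common part, and use the quantile (monotone) coupling on the residuals, which inherit the stochastic ordering. The only difference is in how the function $g$ is extracted: the paper first builds the joint coupling $(Y_1,Y_2)$ from two independent uniforms via $(\tilde F_1^{-1}(U_2),\tilde F_2^{-1}(U_2))$ on the residual event and then defines $g$ as the conditional quantile of $Y_2$ given $Y_1$, which sidesteps the atom-randomization step you flag, whereas you write $g$ down explicitly by first recovering a uniform $W$ from $(X_1,V)$ and then applying $G_2^{-1}$.
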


\begin{proof}[Proof of Proposition \ref{prop:1step}]
Set $t=\frac{4}{2\gamma+1}$, then $0<t<1$. Recall that if $z$ is in the Poincar\'e disk with  $|z|=r<1$ then $d_{\HH}(0,z)=\log\left(\frac{1+r}{1-r}\right)$.

Let $\xi$ be a random variable with distribution Beta$(1,\gamma)$.  Then by Lemma \ref{lem:betacpl}  the random variable $\log\left(\frac{1+|\cB(t)|}{1-|\cB(t)|}\right)$ is stochastically dominated by $\log(\frac{1+\sqrt{\xi}}{1-\sqrt{\xi}})$  and their total variation distance is bounded by $\tfrac32 t$. Since $\log\left(\frac{1+r}{1-r}\right)$ is strictly increasing in $r$, we get that
 $|\cB(t)|$ is stochastically dominated by $\sqrt{\xi}$ and their total variation distance is bounded by $\tfrac32 t$.

By Lemma \ref{lem:stdom1} there exits a measurable function $g$ so that  $g(|\cB(t)|,U)\ge |\cB(t)|$ almost surely, $g(|\cB(t)|,U)$ has the same distribution as $\sqrt{\xi}$, and $P(|\cB(t)|\neq g(|\cB(t)|,U))\le \tfrac32 t\le 3/\gamma$.

We set
\[
\sigma=\inf\{s\ge t: |\cB(s)|=g(|\cB(t)|,U)\}.
\]
Then $\sigma$ is an a.s.~finite stopping time with respect to $\FF_t$ and almost surely $\sigma\ge t$. Because $\sigma$ only depends on $|\cB|$ and $U$, it follows that $\cB(\sigma)$ has rotationally invariant distribution. Finally, from our construction we get that $|\cB(\sigma)|^2=g(|\cB(t)|,U)^2$  has Beta$(1,\gamma)$ distribution and $P(\sigma \neq t)\le \tfrac32 t\le 3/\gamma$.

The only thing left  to prove is the tail bound for $\sigma$. We start with the  bound
\begin{align}\label{tautail}
P(\sigma>r/\gamma) \le P\left(\sigma>r/\gamma\text{ and }d_\HH(0,\cB(\sigma))\le  \frac{r^{1/3}}{\gamma^{1/2}}\right) + P\left(d_\HH(0,\cB(\sigma))> \frac{r^{1/3}}{\gamma^{1/2}}\right).
\end{align}

For the rest of the proof we assume $r\ge 8$. Then $\sigma>r/\gamma>t$ and from the definition of $\sigma$ it follows that $d_{\HH}(0,\cB(s))<d_{\HH}(0,\cB(\sigma))$ for $t\le s<\sigma$.
Thus we can bound the first term  on the right of  (\ref{tautail}) by writing
\begin{align*}
P\left(\sigma>r/\gamma\text{ and }d_\HH(0,\cB(\sigma))\le  \frac{r^{1/3}}{\gamma^{1/2}}\right)&\le P\left(\max_{t\le s \le r/\gamma}d_\HH(0,\cB(s))\le  \frac{r^{1/3}}{\gamma^{1/2}}\right),\\
&\le  P\left(\max_{t\le s \le r/\gamma}d_\HH(\cB(t),\cB(s))\le  2\frac{r^{1/3}}{\gamma^{1/2}}\right)\\
&= P\left(\max_{0\le s \le r/\gamma-t}d_\HH(0,\cB(s))\le 2 \frac{r^{1/3}}{\gamma^{1/2}}\right).
\end{align*}
Since  $t\le \frac{3}{2\gamma}$ and $r\ge 8$, we have $r/\gamma-t\ge \tfrac45 r \gamma^{-1}$.
Using the bound (\ref{lowerhBM}) from Lemma \ref{lem:hypBMtau} of the Appendix we get
\begin{align*}
P(\max_{0\le s \le r/\gamma-t}d_\HH(0,\cB(s))\le 2 \frac{r^{1/3}}{\gamma^{1/2}})
\le P(\max_{0\le s \le \frac45 r \gamma^{-1}}d_\HH(0,\cB(s))\le  2\frac{r^{1/3}}{\gamma^{1/2}})\le \frac{4}{\pi}e^{-\frac{\pi^2 r^{1/3}}{40 }}.
\end{align*}
%

For the second term in  (\ref{tautail}) we recall that by construction $d_\HH(0,\cB(\sigma))$ has the same distribution as
$\log(\frac{1+\sqrt{\xi}}{1-\sqrt{\xi}})$ where $\xi$ has distribution Beta$(1,\gamma)$. By an explicit computation
\begin{align*}
P\left(\log(\tfrac{1+\sqrt{\xi}}{1-\sqrt{\xi}})> u\right)=\sech^{2\gamma}(\tfrac{u}{2}).
\end{align*}
We have $\sech(x)\le 2 e^{-x}$ which gives the following upper bound for $u\ge 4 \log 2$:
\[
P\left(\log(\tfrac{1+\sqrt{\xi}}{1-\sqrt{\xi}})> u\right)\le 2^{2\gamma} e^{-\gamma u}\le e^{-\frac{\gamma u}{2}}.
\]
For $0\le u\le 4 \log 2$ we have $\log \sech(u/2)\le -\frac{u^2}{12}$ so for these values we get
\[
P\left(\log(\tfrac{1+\sqrt{\xi}}{1-\sqrt{\xi}})> u\right)\le e^{-\frac{\gamma u^2}{6}}.
\]
From this we get
\[
P(d_\HH(0,\cB(\sigma))> \frac{r^{1/3}}{\gamma^{1/2}})\le \max\left( e^{-\frac{\sqrt{\gamma}  r^{1/3}}{2}}, e^{-\frac{ r^{2/3}}{6}}  \right)\le e^{-\tfrac13 r^{1/3}},
\]
where in the last step we used $\gamma\ge 3/2$ and $r\ge 8$.
Collecting our estimates we get
\[
P(\sigma>r/\gamma) \le\frac{4}{\pi}e^{-\frac{\pi^2 r^{1/3}}{40 }}+ e^{-\tfrac13 r^{1/3}}\le 3 e^{-\tfrac15 r^{1/3}},
\]
which completes the proof of the proposition.
\end{proof}

\subsection{Path coupling}\label{s:path}


\begin{proposition}\label{prop:stoptimes}
Let $\cB$ be hyperbolic Brownian motion and $U$ an independent \textup{Uniform}$[0,1]$ random variable. Let $\FF_t, t\ge 0$ be the filtration of $\cB$ enlarged with $U$.

There exists a collection of stopping times  $(\tau_{n,k}; 1\le n, 0\le k\le n)$ with respect to $\FF_t, t\ge 0$  so that the following statements hold.
\begin{enumerate}
\item For each fixed $n$ we have $0=\tau_{n,n}<\tau_{n,n-1}\ldots <\tau_{n,0}=\infty$, and $\Delta \tau_{n,k}:=\tau_{n,k}-\tau_{n,k+1}$ are independent.
\item For each $n$, the process $(b_{k}^{(n)}=\cB(\tau_{n,n-k}), k =0,1,\ldots, n)$ is a hyperbolic random walk with the same distribution as the one given before Theorem \ref{thm:circop}.

\item Let $t_{n,k}=\frac{4}{\beta}\log\left(\frac{n}{k}\right)$ for $1\le k\le n$.
There exists a  random integer $N_0>0$ so that for $n\ge N_0$ and $\log^6 n\le k\le n$ we have almost surely
\begin{align}\label{pathtime}
t_{n,k} - \frac{4 }{\beta^{2}k} \le \tau_{n,k}\le t_{n,k} + \frac{\log^{4+1/2} n}{k}.
\end{align}

\item For $k$ fixed the hyperbolic distance $d_\HH(\cB(\tau_{n,k}),\cB(\tau_{n,k-1}))$ does not depend on $n$ as long as
$k<\log^6 n$.
\end{enumerate}
\end{proposition}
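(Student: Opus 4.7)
The plan is to build the stopping times $\tau_{n,k}$ inductively as $k$ decreases from $n$ to $0$, chaining together single-step couplings via the strong Markov property of hyperbolic Brownian motion, with two regimes. Extract from $U$ an independent sequence of auxiliary uniforms $(V_k,W_k)_{k\ge 1}$. For ``bulk'' indices $k\ge \lceil \log^6 n\rceil$, pre-compose $\cB$ with a hyperbolic isometry sending $\cB(\tau_{n,k+1})$ to the origin of the Poincar\'e disk, apply Proposition \ref{prop:1step} to the restarted path with $\gamma=\gamma_k:=\beta k/2$ using $V_k$, and set $\Delta\tau_{n,k}$ equal to the resulting $\sigma_k$. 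For ``boundary'' indices $1\le k<\lceil\log^6 n\rceil$, draw $\zeta_k\sim\mathrm{Beta}(1,\gamma_k)$ from $W_k$ by inverse CDF, set $Y_k'=\log\tfrac{1+\sqrt{\zeta_k}}{1-\sqrt{\zeta_k}}$, and let $\Delta\tau_{n,k}$ be the first time the restarted path reaches hyperbolic distance $Y_k'$. Set $\tau_{n,0}=\infty$; by transience of $\cB$, the limit $\cB(\infty):=\lim_{t\to\infty}\cB(t)$ seen from $\cB(\tau_{n,1})$ has the uniform boundary law required of $b_n$.

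Parts (1) and (2) follow at once: strong Markov together with the independence of the $(V_k,W_k)$ makes the $\Delta\tau_{n,k}$ independent, and each step realizes the correct $\mathrm{Beta}(1,\gamma_k)$-displacement with uniformly random direction (rotational symmetry of the Poincar\'e-disk construction in the bulk, and of $\cB$'s first hitting of a target hyperbolic radius in the boundary regime). Part (4) is also immediate: for $1\le k<\lceil\log^6 n\rceil$ the distance $d_\HH(\cB(\tau_{n,k}),\cB(\tau_{n,k-1}))$ equals $Y_{k-1}'$, a deterministic function of $W_{k-1}$ and $k$ (and for $k=1$ the distance is $\infty$, trivially $n$-independent). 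The lower bound in (3) is likewise deterministic: Proposition \ref{prop:1step}(3) forces $\Delta\tau_{n,j}\ge 4/(\beta j+1)$ for every bulk $j\ge k\ge\lceil\log^6 n\rceil$, and comparing the resulting sum to $\int_k^n 4(\beta x+1)^{-1}dx=\tfrac{4}{\beta}\log\tfrac{\beta n+1}{\beta k+1}$ yields $\tau_{n,k}\ge t_{n,k}-4/(\beta^2 k)$.

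The main obstacle is the upper bound in (3). Decomposing $\Delta\tau_{n,j}=4/(\beta j+1)+\delta_{n,j}$, Proposition \ref{prop:1step} gives $\delta_{n,j}\ge 0$, $P(\delta_{n,j}>0)\le 6/(\beta j)$, and the stretched exponential tail $P(\delta_{n,j}>r/\gamma_j)\le 3e^{-r^{1/3}/5}$ for $r\ge 8$. Taking $r=(15\log n)^3$ and a union bound over $j\le n$ gives, with probability $1-O(n^{-2})$, the uniform estimate $\delta_{n,j}\le C_\beta\log^3 n/j$. On this event, $\sum_{j\ge k}\delta_{n,j}\le C_\beta\log^3 n\cdot S_k$, where $S_k=\sum_{j=k}^{n-1}\ind(\delta_{n,j}>0)/j$ is a sum of independent variables bounded by $1/k$, with $ES_k=O(1/(\beta k))$ and $\mathrm{Var}(S_k)=O(1/(\beta k^2))$. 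A Bernstein inequality followed by a union bound over $k\in[\log^6 n,n]$ yields $S_k\le C'\log n/k$ with probability $1-O(n^{-2})$, so $\sum_{j\ge k}\delta_{n,j}\le C''\log^4 n/k$. Combining with the deterministic estimate $\sum_{j=k}^{n-1}4/(\beta j+1)\le t_{n,k}+O(1/k)$ and a Borel--Cantelli argument (summing the $O(n^{-2})$ bad-event probabilities) produces $\tau_{n,k}\le t_{n,k}+\log^{4+1/2}n/k$ a.s.\ for all $n\ge N_0$, completing (3).
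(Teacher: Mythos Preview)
Your construction is the same as the paper's: split into bulk indices (apply Proposition~\ref{prop:1step}) and boundary indices (hit a prescribed radius drawn once from a Beta variable), with the auxiliary randomness extracted from $U$; parts (1), (2), (4) and the lower bound in (3) follow exactly as you say. One small omission: Proposition~\ref{prop:1step} requires $\gamma\ge 3/2$, so the bulk regime should be $k\ge\max(\lceil\log^6 n\rceil,\lceil 3/\beta\rceil)$; the paper handles this by first choosing $n_0$ with $\log^6 n_0\ge 3/\beta$, which you should state.

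For the upper bound in (3) your argument is correct but organized a little differently from the paper's. The paper introduces the two-valued majorant $Z_{n,j}=\tfrac{4}{\beta j+1}\ind(\sigma_{n,j}=\tfrac{4}{\beta j+1})+\tfrac{2\log^{3+3/8}n}{\beta j}\ind(\sigma_{n,j}\neq\tfrac{4}{\beta j+1})$, first uses Borel--Cantelli to ensure $\sigma_{n,j}\le Z_{n,j}$, and then bounds $\sum_j(Z_{n,j}-\tfrac{4}{\beta j+1})$ by a single exponential-moment (Chernoff) estimate. You instead first cap each $\delta_{n,j}$ by $C_\beta\log^3 n/j$ via a union bound, and then apply Bernstein to the indicator sum $S_k=\sum_j\ind(\delta_{n,j}>0)/j$. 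Both routes exploit the same two inputs from Proposition~\ref{prop:1step} (the $O(1/j)$ probability of a nonzero excess and the stretched-exponential tail), and both land on $\sum_j\delta_{n,j}\le C\log^4 n/k$; the paper's version packages the two steps into one Chernoff bound, while yours separates the ``size'' and ``sparsity'' controls, which makes the role of each estimate slightly more transparent.
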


\begin{proof}
Using a standard argument we can find measurable functions $f_{1,k}, f_{2,k}$ so that $U_k=f_{1,k}(U)$ is a Uniform$[0,1]$ random variable, $\xi_k=f_{2,k}(U)$ is distributed as  Beta$(1,\frac{\beta}{2}k)$ and $U_1, U_2, \dots, \xi_1, \xi_2, \dots$ are independent.

We first give the construction of the stopping times, then prove that they satisfy all the conditions.

For a fixed $n\ge 1$ we define $\tau_{n,k}$ recursively, starting with $\tau_{n,n}=0$. If for a certain $k<n$ we have already defined $\tau_{n,k+1}$ then we define $\tau_{n,k}$ as follows.
\begin{itemize}
\item \textbf{In the case of  $k\ge \max(  \log^6 n ,\frac{3}{\beta})$:}\\ We apply Proposition \ref{prop:1step} with $\gamma=\frac{\beta}{2}k$
 for the hyperbolic Brownian motion $\tilde \cB(t)=\cB(t+\tau_{n,k+1}), t\ge 0$ and the independent uniform random variable $U_k$, and denote the constructed stopping time by $\sigma_{n,k}$. We set $\tau_{n,k}=\tau_{n,k+1}+\sigma_{n,k}$.

\item \textbf{In the case of  $1\le k<\max( \log^6 n,\frac{3}{\beta})$:}\\ We set
\[
\tau_{n,k}=\inf\left\{t\ge \tau_{n,k+1}: d_\HH(\cB(\tau_{n,k+1}), \cB(t))=\log\left(\tfrac{1+\sqrt{\xi_k}}{1-\sqrt{\xi_k}}\right)  \right \}.
\]

\item For $k=0$ we define $\tau_{n,k}=\infty$.

\end{itemize}

By construction, the random variables $\tau_{n,k}$ are a.s.~finite stopping times with respect to the filtration $\FF_t, t\ge 0$, and they satisfy conditions 1, 2 and 4.
To check Condition 3 we first choose $n_0$ so  that for $n\ge n_0$ we have $\log^6 n\ge \frac{3}{\beta}$ and $\log^{3+3/8} n\ge 8$. During the rest of this proof we will assume $1\le \log^6 n\le k\le n$.

For $n\ge n_0$,  by Proposition \ref{prop:1step} we have almost surely
\[
\tau_{n,k}\ge \sum_{j=k}^{n-1} \frac{4}{\beta j+1} \ge \frac{4}{\beta}\log\left(\frac{n}{k}\right)-\frac{4}{\beta^2} \frac1k.
\]
This takes care of the lower bound in (\ref{pathtime}).

For the upper bound recall the definition of $\sigma_{n,k}=\tau_{n,k}-\tau_{n,k+1}$. From Proposition \ref{prop:1step} we have the following estimates:
\begin{align}\notag
&P\left(\sigma_{n,k}\ge \tfrac{4}{\beta k +1}\right)=1, \quad P\left(\sigma_{n,k}=\tfrac{4}{\beta k +1}\right)\ge 1-\frac{6}{\beta k},  \\[4pt] & P\left(\sigma_{n,k}>\tfrac{\log^{3+3/8} n}{\frac{\beta}{2} k}\right)\le 3 e^{-\frac15 \log^{1+1/8} n}.\label{signk}
\end{align}

Since $\sum_{k, n} P\left(\sigma_{n,k}>\tfrac{\log^{3+3/8} n}{\frac{\beta}{2} k}\right)\le 3 \sum_n n \, e^{-\frac13\log^{1+1/8} n}<\infty$, by the Borel-Cantelli  lemma there is a random $N_1\ge n_0$ so that for $n\ge N_1$ we have $\sigma_{n,k}\le \tfrac{\log^{3+3/8} n}{\frac{\beta}{2} k}$ a.s.
Set
\begin{align}\label{Znk}
Z_{n,k}=\frac{4}{\beta k+1}\ind(\sigma_{n,k}=\tfrac{4}{\beta k+1})+\tfrac{2 \log^{3+3/8} n }{\beta k} \ind(\sigma_{n,k}\neq \tfrac{4}{\beta k+1}).
\end{align}
For $n\ge N_1$ we have $ \sigma_{n,k}\le  Z_{n,k}$ and $\tau_{n,k}\le \sum_{j=k}^{n-1} Z_{n,j}$ a.s.

Next we will bound $P(\sum_{j=k}^{n-1} (Z_{n,j}-\tfrac{4}{\beta j+1})\ge \tfrac{\log^{9/2} n}{k})$. For $\lambda>0$ from (\ref{signk}) and  (\ref{Znk})  we get
\begin{align}\label{expmom}
E \left(e^{\sum_{j=k}^{n-1} \lambda (Z_{n,j}-\frac{4}{\beta k+1})}\right)\le \prod_{j=k}^{n-1} \left(1-\frac{6}{\beta j}+\frac{6}{\beta j}  e^{\lambda \tfrac{2 \log^{3+3/8} n}{\beta j}}    \right).
\end{align}
Assuming that $\lambda \tfrac{2\log^{3+3/8} n}{\beta k} \le 1$ we can use that $\tfrac{e^{x}-1}{x}\le 2$ for $x\le 1$ to bound the right side of (\ref{expmom}) as
\begin{align*}
 \prod_{j=k}^{n-1} \left(1-\frac{6}{\beta j}+\frac{6}{\beta j}  e^{\lambda \tfrac{2 \log^{3+3/8} n}{\beta j}}    \right)&\le  \prod_{j=k}^{n-1} \left(1+\frac{6}{\beta j}  \cdot 2 \lambda \frac{2 \log^{3+3/8} n}{\beta j}  \right)\le  \prod_{j=k}^{n-1} e^{\frac{24\lambda \log^{3+3/8} n }{\beta^2 j^2}}\\
 &\le e^{\frac{48\lambda\log^{3+3/8} n }{\beta^2 k}}.
\end{align*}
Setting now  $\lambda= \tfrac{\beta k}{2 \log^{3+3/8} n}$ and using the exponential Markov inequality we obtain
\begin{align*}
P\left(\sum_{j=k}^{n-1} (Z_{n,j}-\tfrac{4}{\beta j+1})\ge \tfrac{\log^{4+1/2} n}{ k}\right)&=E \left(e^{\sum_{j=k}^{n-1} \lambda (Z_{n,j}-\frac{4}{\beta k+1})}\right) e^{-\lambda  \tfrac{\log^{4+1/2} n}{ k}}
\\
&\le e^{\frac{48\lambda \log^{3+3/8} n }{\beta^2 k}-\lambda \tfrac{\log^{4+1/2} n}{ k}}\le e^{\frac{24}{\beta}-\frac{\beta}{2} \log^{1+1/8} n}.
\end{align*}
Since $\sum_{n} n   e^{\frac{24}{\beta}-\frac{\beta}{2} \log^{1+1/8} n}<\infty$, the Borel-Cantelli lemma implies that there is a random $N_0\ge N_1$ so that for $n\ge N_0$ and $\log^6 n\le k\le n$ we have a.s.
\[
\sum_{j=k}^{n-1} Z_{n,j}\le \sum_{j=k}^{n-1} \frac{4}{\beta j+1}+\tfrac{\log^{4+1/2}n}{k}\le \frac{4}{\beta} \log\left(\frac{n}{k}\right)+\tfrac{\log^{4+1/2} n}{k}.
\]
Since $\tau_{n,k}\le \sum_{j=k}^{n-1}$, the upper bound in (\ref{pathtime}) follows.
\end{proof}
\subsection{Path comparison}\label{s:pathcmp}

Let $\cB$ be a hyperbolic Brownian motion and consider the stopping times $\tau_{n,k}$ constructed in Proposition \ref{prop:stoptimes}. Set $\tilde \cB_n(t)=\cB(\tau_{n,\lceil (1-t) n\rceil})$ and $\tilde \cB(t)=\cB(-\tfrac{4}{\beta}\log(1-t))$.  The next proposition gives bounds on $d_{\HH}(\tilde \cB(t), \tilde \cB_n(t))$.

\begin{proposition}\label{prop:path} There is a random integer $N^*$   so that for $n\ge N^*$ we have the following a.s.~inequalities:
\begin{align}\label{path1}
d_\HH(\tilde \cB(t),\tilde \cB_n(t))&\le  \frac{\log^{3-1/8} n}{\sqrt{(1-t) n}}, \qquad \text{if} \quad 0\le t\le 1-\frac{\log^6 n}{n},\\
d_\HH(\tilde \cB_n(1-\tfrac{\log^6 n}{n}),\tilde \cB_n(t))&\le \frac{144}{\beta}(\log \log n)^2, \qquad \text{if} \quad 1-\frac{\log^6 n}{n}\le t<1.\label{path2}
\end{align}
\end{proposition}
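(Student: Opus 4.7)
The plan is to treat the two regimes of $t$ separately, exploiting Proposition \ref{prop:stoptimes}(3) in the bulk and the fact (Proposition \ref{prop:stoptimes}(4)) that for small $k$ the increments are simply Brownian hitting times of fixed hyperbolic distances.

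For (\ref{path1}), fix $t\in[0,1-\log^6 n/n]$ and set $k=\lceil(1-t)n\rceil\ge\log^6 n$. Writing $s=-\tfrac{4}{\beta}\log(1-t)$ and $s_n=\tau_{n,k}$, a triangle inequality splits $|s-s_n|$ into the discretization gap $|s-t_{n,k}|$ and the stopping-time gap $|t_{n,k}-\tau_{n,k}|$. The first is at most $\tfrac{4}{\beta}\bigl|\log\tfrac{(1-t)n}{k}\bigr|\le C/k$ because $k\in[(1-t)n,(1-t)n+1]$, and the second is at most $\log^{4+1/2} n/k$ by Proposition \ref{prop:stoptimes}(3). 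Thus $|s-s_n|\le C\log^{4+1/2}n/((1-t)n)$, with both times lying in $[0,T_n]$ for $T_n=O(\log n)$. The plan is now to invoke a uniform modulus of continuity for hyperbolic Brownian motion on $[0,T_n]$ of the schematic form
\[
\sup_{s,s'\in[0,T_n],\,|s-s'|\le h}d_\HH(\cB(s),\cB(s'))\;\le\; C\sqrt{h\log(T_n/h)}\qquad\text{a.s.,}
\]
which follows from a standard Kolmogorov-chaining / Lévy-type argument since the hyperbolic metric is comparable to the Euclidean one in local charts. Substituting $h=C\log^{4+1/2}n/((1-t)n)$ and $\log(T_n/h)=O(\log n)$ yields $d_\HH(\tilde\cB(t),\tilde\cB_n(t))\le C\log^{2+3/4}n/\sqrt{(1-t)n}$, which is dominated by $\log^{3-1/8}n/\sqrt{(1-t)n}$ once $n\ge N^{*}$.

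For (\ref{path2}), the index $k=\lceil(1-t)n\rceil$ lies in $\{1,\ldots,\lceil\log^6 n\rceil\}$, so the quantity to control is $\max_{1\le k\le\log^6 n}d_\HH(\cB(\tau_{n,\log^6 n}),\cB(\tau_{n,k}))$. By Proposition \ref{prop:stoptimes}(4) (and assuming $n$ is large enough that $\log^6 n\ge 3/\beta$), each $\tau_{n,k}-\tau_{n,k+1}$ for $k<\log^6 n$ is simply a hitting time $T_{Y_k}$ of hyperbolic distance $Y_k$ by $\cB$, where $Y_k=\log\tfrac{1+\sqrt{\xi_k}}{1-\sqrt{\xi_k}}$ with $\xi_k\sim\textup{Beta}(1,\beta k/2)$ independent of $\cB$. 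The plan is to bound the total window length $L_n=\tau_{n,1}-\tau_{n,\log^6 n}=\sum_{k=1}^{\lceil\log^6 n\rceil-1}T_{Y_k}$, and then the radial excursion of $\cB$ over a time window of length $L_n$. For the first step, Itô applied to the radial part $r$ of hyperbolic BM gives $d(r^2)=2r\,dW+(r\coth r+1)\,dt\ge 2r\,dW+2\,dt$, and optional stopping yields $E[T_{Y_k}\mid Y_k]\le Y_k^2/2$. A direct Beta computation shows $E[Y_k^2]=O(1/(\beta k))$, so $E[L_n]=O((\log\log n)/\beta)$. Exploiting the known tail $P(Y_k>u)=\sech^{\beta k}(u/2)$ and a Bernstein-type concentration inequality for sums of independent hitting times, this mean bound promotes to an a.s.\ bound $L_n\le C(\log\log n)/\beta$ for $n\ge N^{*}$ via Borel-Cantelli. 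Applying the strong Markov property and a radial linear-escape plus $\sqrt{L_n\log L_n}$ modulus bound then converts this into a hyperbolic distance bound of order $(\log\log n)/\beta$, which sits comfortably below $(144/\beta)(\log\log n)^2$.

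The main obstacle will be the promotion of the first-moment estimate on $L_n$ to an almost-sure one valid for all $n\ge N^{*}$, since the hitting-time tails of $T_{Y_k}$ are not immediately exponential in the scale of interest and the Borel-Cantelli summability in $n$ is not automatic from the variance bound alone. The natural remedy is a truncation argument separating a typical regime $\{Y_k\lesssim 1/\sqrt{\beta k}\}$ on which $T_{Y_k}$ concentrates sharply from a rare atypical regime handled deterministically via union bound; this is the most delicate step, and the generous $(\log\log n)^2$ on the right of (\ref{path2}) is presumably calibrated to leave slack for it.
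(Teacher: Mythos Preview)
Your argument for \eqref{path1} is correct and matches the paper: bound the time gap via Proposition~\ref{prop:stoptimes}(3) plus the discretization error, then feed this into a Levy-type modulus of continuity for hyperbolic Brownian motion (the paper's Proposition~\ref{prop:modcont}).

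For \eqref{path2} you take a genuine detour, and the obstacle you flag is one you have created for yourself. You correctly observe that in the small-$k$ regime each $\tau_{n,k}$ is the hitting time of hyperbolic distance $Y_k$ from $\cB(\tau_{n,k+1})$; but this means that \emph{by construction}
\[
d_\HH\bigl(\cB(\tau_{n,k+1}),\cB(\tau_{n,k})\bigr)=Y_k \quad\text{exactly.}
\]
Hence the triangle inequality already gives
\[
d_\HH\bigl(\tilde\cB_n(T_n),\tilde\cB_n(t)\bigr)\;\le\;\sum_{j=1}^{\lceil\log^6 n\rceil-1} Y_j,
\]
and the hitting times $T_{Y_k}$, the window length $L_n$, and the subsequent time-to-distance conversion never enter. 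The paper simply bounds this sum: from $P(Y_j\ge r)=\sech^{\beta j}(r/2)\le 2e^{-\beta j r/2}$ one gets $\sum_j P(Y_j\ge \tfrac{4}{\beta}\tfrac{\log j}{j})<\infty$, and since the $Y_j$ are $n$-independent (this is exactly what Proposition~\ref{prop:stoptimes}(4) is for), a single Borel--Cantelli in $j$ yields $\sum_{j=1}^m Y_j\le \tfrac{4}{\beta}\log^2 m$ eventually, whence $\sum_{j=1}^{\log^6 n}Y_j\le \tfrac{4}{\beta}(6\log\log n)^2=\tfrac{144}{\beta}(\log\log n)^2$. No concentration for sums of hitting times, no truncation, no summability in $n$ is needed.

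Your route is not wrong in principle --- bounding $L_n$ and then the excursion of $\cB$ over that window could be pushed through --- but the ``main obstacle'' you identify (promoting $E[L_n]=O((\log\log n)/\beta)$ to an a.s.\ bound with Borel--Cantelli control in $n$) is real for your approach and you have only sketched its resolution. The paper's route bypasses it entirely by working with distances rather than times.
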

\begin{proof}Let $T_n=1-\frac{\log^6 n}{n}$.
Consider $N_0$ from the statement of Proposition \ref{prop:stoptimes}. For $n\ge N_0$ and $0\le t\le T_n$ we have a.s.
\begin{align}\label{tcd1}
\left|\tau_{n,\lceil (1-t) n\rceil}-\tfrac{4}{\beta}\log(\tfrac1{1-t})   \right|&\le \tfrac{4}{\beta^2 \lceil (1-t) n\rceil}+\tfrac{\log^{9/2} n}{\lceil (1-t) n\rceil}+\tfrac{4}{\beta}\left|\log\left(\tfrac{n}{\lceil (1-t) n\rceil}\right) -\log(\tfrac{1}{1-t}) \right|\le \frac{\log^{4+5/8} n}{(1-t) n},
\end{align}
where for the second inequality we also assume $n\ge n_0$ with an $n_0$ only depending on $\beta$. Inequality (\ref{tcd1}) implies
\begin{align*}
d_\HH(\tilde \cB(t),\tilde \cB_n(t))\le \max\{d_\HH(\cB(s),\cB(s+u)): |u|\le h, \,0\le s+u \},
\end{align*}
with $h= \frac{\log^{4+5/8} n}{(1-t) n}$ and $s=\tfrac{4}{\beta}\log(\tfrac1{1-t})$. Note that $h\le \frac{\log^{4+5/8} n}{\log^6 n}\le \tfrac1{\log n}$.

Consider the random constant $h_0$ from the statement of  Proposition \ref{prop:modcont} of the Appendix. If $n\ge e^{h_0^{-1}}$ then $h\le h_0$  and we may apply Proposition \ref{prop:modcont} with $s, s+u$ if $0\le u\le h$ and with $s+u, s$ if $0\le -u\le \min(h,s)$.  Using the fact that $h \log(2+ \tfrac{s+1}{h})$ is monotone increasing in $h$ and $s$ we get
\begin{align*}
\max\{d_\HH(\cB(s),\cB(s+u)): |u|\le h, \,0\le s+u \}\le  20 \sqrt{h \log(2+ \tfrac{s+1}{h})}\le \frac{\log^{3-1/8} n}{\sqrt{(1-t) n}},
\end{align*}
if  $0\le t\le T_n$ and $n\ge N_1$, with a  random integer $N_1$.

%
%
%
%

To get the estimate for the $1-T_n\le t<1$ case we use 
%
the construction given in the proof of Proposition \ref{prop:stoptimes} and the triangle inequality repeatedly to get
\[
d_{\HH}(\tilde \cB_n(T_n),\tilde \cB_n(t))\le\sum_{j=1}^{\log^6 n-1} d_\HH(\cB_n(\tfrac{n-j}{n}), \cB_n(\tfrac{n-j-1}{n}))= \sum_{j=1 }^{\log^6 n-1} Y_j.
\]
Here $Y_j=\log\left(\tfrac{1+\sqrt{\xi_j}}{1-\sqrt{\xi_j}}  \right)$ and $\xi_j\sim$ Beta$(1,\frac{\beta}{2} j)$ are independent random variables. We have
\[
P(Y_j\ge r)=\sech^{\beta j}(r/2)\le 2e^{-\frac{\beta}{2} j r}
\]
which implies  $\sum_j P(Y_j\ge \tfrac{4}{\beta} \tfrac{\log j}{j})<\infty$. This means that there is a random $M_0$ so that for $m\ge M_0$ we have $Y_m\le \tfrac{4}{\beta} \tfrac{\log m}{m}$ and also $\sum_{j=1}^m Y_j\le \tfrac{4}{\beta} \log^2 m$. Thus if $\log^6 n\ge M_0$  then for $1-T_n\le t<1$ we have almost surely
\begin{align}
d_{\HH}(\tilde \cB_n(T_n),\tilde \cB_n(t))\le \frac{144}{\beta} (\log \log n)^2,\end{align}
%
which completes the proof of  (\ref{path2}).
\end{proof}

\section{Hilbert-Schmidt bounds} \label{s:HS}

%
%

This section contains general bounds on Dirac operators in terms of their defining paths.

\begin{proposition}[Hilbert-Schmidt property]\label{prop:HS1}
Let $\{\gamma(t), 0\le t\}$ be a measurable path in $\HH$,  $\eta_0, \eta_1\in \partial \HH$ distinct boundary points and $z_0\in \HH$. For a $\nu>0$ set $\tilde \gamma(t)=\gamma(\nu \log(\tfrac{1}{1-t}))$. Let $z(t)$ be the point moving with  speed $\alpha>0$ on the geodesic connecting $z_0$ to $\eta_1$ with $z(0)=z_0$.
Assume that there are constants  $b>0$ and $\eps\in(0,\nu^{-1})$ so that
\begin{align}\label{linear2}
d_{\HH}(\gamma(t), z(t))\le b+\eps t.
\end{align}
Then  the operator
\begin{align}\label{tauDir}
\tau=\Dirop(\tilde \gamma(t), \eta_0, \eta_1)
\end{align}
is self-adjoint on the appropriate domain and $\tau^{-1}$ is Hilbert-Schmidt.
\end{proposition}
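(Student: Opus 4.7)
The plan is to apply Theorem~\ref{thm:Dirop} with $\xi = z_0$, reducing everything to verifying the two integrability conditions \eqref{horo_int1} for the reparametrized path $\tilde\gamma$. To produce the required horocyclic distance bounds, I would use the cocycle identity
\[
d_\eta(\gamma(u), z_0) = d_\eta(\gamma(u), z(u)) + d_\eta(z(u), z_0),
\]
which is immediate from the definition of $d_\eta$ as a limit of hyperbolic distance differences. Since $|d_\eta(a,b)| \le d_\HH(a,b)$, the first summand is at most $b + \eps u$ by hypothesis \eqref{linear2}. For the second, $z(u)$ is a speed-$\alpha$ geodesic from $z_0$ to $\eta_1$, so $d_{\eta_1}(z(u), z_0) = -\alpha u$ exactly; the companion quantity $d_{\eta_0}(z(u), z_0)$ grows like $+\alpha u$, but the key geometric fact is that the sum $d_{\eta_0}(z(u), z_0) + d_{\eta_1}(z(u), z_0)$ remains bounded by a constant $C = C(z_0, \eta_0, \eta_1)$, as one verifies from the explicit formulas \eqref{horo} after placing $\eta_1$ at $\infty$.

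Changing variables via $u = \nu \log(1/(1-t))$ converts $dt$ on $[0,1)$ to $\nu^{-1} e^{-u/\nu}\,du$ on $[0,\infty)$. The first integral in \eqref{horo_int1} becomes $\nu^{-1} \int_0^\infty e^{d_{\eta_1}(\gamma(u), z_0) - u/\nu}\,du$, and the bound $d_{\eta_1}(\gamma(u), z_0) \le b + (\eps - \alpha) u$ yields $\nu^{-1} e^b \int_0^\infty e^{(\eps - \alpha - \nu^{-1}) u}\,du < \infty$, which converges as $\eps < \nu^{-1}$ and $\alpha > 0$. The second (double) integral similarly reduces to $\nu^{-1} \int_0^\infty e^{-2u/\nu}\, e^{d_{\eta_0}(\gamma(u), z_0) + d_{\eta_1}(\gamma(u), z_0)}\,du$ after the same substitution and the trivial inner $dt$-integration (contributing an extra factor $1-t = e^{-u/\nu}$), and the combined horocyclic bound makes the exponent at most $(2\eps - 2\nu^{-1}) u + 2b + C$, integrable precisely under the hypothesis $\eps < \nu^{-1}$.

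The main obstacle is the geometric input that the sum $d_{\eta_0}(z(u), z_0) + d_{\eta_1}(z(u), z_0)$ along the geodesic stays bounded rather than growing linearly: the crude bound $|d_{\eta_0}(z(u), z_0)| \le \alpha u$ alone would give $d_{\eta_0} + d_{\eta_1} \le 2b + 2(\eps + \alpha) u$, forcing the strictly stronger assumption $\eps + \alpha < \nu^{-1}$ rather than the given $\eps < \nu^{-1}$. Otherwise the argument is just careful sign bookkeeping, the crucial sign being the \emph{negative} $-\alpha u$ in $d_{\eta_1}(z(u), z_0)$ because $z$ moves \emph{toward} $\eta_1$, so this contribution pulls the $\eta_1$-exponent down and secures the convergence.
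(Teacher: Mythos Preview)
Your proposal is correct and follows the same route as the paper: both verify the hypotheses of Theorem~\ref{thm:Dirop} at $\xi=z_0$ by using the geodesic $z$ to bound $d_{\eta_1}(\gamma(u),z_0)\le b-(\alpha-\eps)u$ and $d_{\eta_0}(\gamma(u),z_0)\le b+(\alpha+\eps)u$ (the paper via an explicit isometry sending $(z_0,\eta_1)\to(i,\infty)$, you via the cocycle identity and a change of variables). Two minor remarks: your ``bounded sum'' fact is an immediate consequence of these two separate bounds, so your last paragraph overstates the difficulty; and the paper's proof actually treats the double integral in~\eqref{horo_int1} with $\tilde\gamma(s)$ in the $\eta_0$-term and $\tilde\gamma(t)$ in the $\eta_1$-term---correcting an apparent typo in the stated condition---which is why it keeps the two horocyclic bounds separate rather than summing them at one point.
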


\begin{proof}
We will check that the conditions of Theorem \ref{thm:Dirop} are satisfied.

If $Q$ is an isometry of $\HH$ then $d_{\HH}(z_1,z_2)=d_{\HH}(Q z_1, Q z_2)$ and $d_{\eta}(z_1,z_2)=d_{Q\eta}(Qz_1,Qz_2)$. Take an isometry $Q$ for which $Qz_0=i$ and $Q \eta_1=\infty$ and  denote $Q\eta_0$ by $q$. The  geodesic $z(t)$ is mapped into the geodesic connecting $i$ with $\infty$ with speed $\alpha$, thus $Qz(t)=i e^{ \alpha t}$.
From (\ref{horo}) it follows that
\begin{align*}
d_{\eta_1}(z(t),z_0)&=d_{\infty}(i e^{\alpha t},i)=-\alpha t,\\
d_{\eta_0}(z(t),z_0)&=d_q(i e^{\alpha t},q)=\alpha t-\log(1+q^2)\le \alpha t.
\end{align*}
From the triangle inequality we get
\begin{align}\notag
d_{\eta_1}(\gamma(t),z_0)&\le d_{\eta_1}(z(t),z_0)+d_\HH(\gamma(t), z(t))\le -(\alpha-\eps)t+b,\\
d_{\eta_0}( \gamma(t),z_0)&\le d_{\eta_0}(z(t),z_0)+d_\HH(\gamma(t), z(t))\le (\alpha+\eps)t+b.\label{pathbnd1}
\end{align}
The bounds in (\ref{horo_int1}) now follow easily:
\[
\int_0^1 e^{d_{\eta_1}(\tilde \gamma(t),z_0)}dt\le \int_0^1 e^{-(\alpha-\eps) \nu \log\left(\frac{1}{1-t}\right)+b}dt=e^b \int_0^1 (1-t)^{(\alpha-\eps) \nu} dt=\frac{e^b}{1+(\alpha-\eps)\nu}<\infty
\]
and
\begin{align*}
\int_0^1 \int_0^t  e^{d_{\eta_0}(\tilde \gamma(s),z_0)+d_{\eta_1}(\tilde \gamma(t),z_0)} ds dt&\le e^{2b} \int_0^1 \int_0^t (1-s)^{-(\alpha+\eps)\nu} (1-t)^{(\alpha-\eps)\nu}ds dt\\&= \frac{e^{2b}}{2(1+(\alpha-\eps)\nu)(1-\eps \nu)}<\infty.
 \end{align*}
%
%
%
%
\end{proof}

 Recall from (\ref{HSkernel}) that $\res \tau$ is an integral operator with kernel $K_{\res \tau}$. For $0<T<1$ we denote by $\ress_T \tau$ the integral operator with kernel $K_{\res \tau}(x,y)\cdot \ind(0\le x\le T, 0\le  y\le T) $.

\begin{proposition}[Hilbert-Schmidt truncation]\label{prop:HS2}
Let $\gamma, \eta_0, \eta_1, z_0, \alpha, \nu, \tilde \gamma, z(t)$ be as in Proposition \ref{prop:HS1},  and define $\tau$ according to (\ref{tauDir}).
\begin{enumerate}
\item Assume that (\ref{linear2}) holds for some $b>0$,  $\eps\in(0,\nu^{-1})$ with $\alpha \neq \nu^{-1}-\eps$. Then for any $T>0$ we have
\begin{align}\label{HStail1}
\|\res \tau-\ress_T \tau\|_{\HS}^2 \le C\,(1-T)^{1 +\min(\nu (\alpha-\eps),1-2\eps \nu)}.
\end{align}
with $C$ depending on $\eta_0, \eta_1, 1-\nu \eps, \alpha, b$.

\item Assume that for some $0<T<1$ (\ref{linear2}) holds for $0\le t\le \nu \log(\tfrac{1}{1-T})$ with some $b>0$ and $\eps\in(0,\nu^{-1})$  where $\alpha \neq \nu^{-1}-\eps$. Assume further that $d_\HH(\tilde \gamma(t),\tilde \gamma(T))\le M$ for $t\ge T$.
Then  we have
\begin{align}\label{HStail2}
\|\res \tau-\res \tau_T\|_{\HS}^2 \le C e^{2M} \, (1-T)^{1 +\min(\nu (\alpha-\eps),1-2\eps \nu)}.
\end{align}
with $C$ depending on $\eta_0, \eta_1, 1-\nu \eps, \alpha, b$.
\end{enumerate}
%
%
\end{proposition}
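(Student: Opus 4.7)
The plan is to express $\|\res\tau - \ress_T\tau\|_{\HS}^2$ as an integral of $\|K_{\res\tau}(x,y)\|_F^2$ over $[0,1)^2\setminus [0,T]^2$, bound the integrand by exponentials of horocyclic distances, and then insert the path estimates (\ref{pathbnd1}) obtained in the proof of Proposition \ref{prop:HS1}. A direct calculation with $X(s)=\frac{1}{\sqrt{y(s)}}\mat{1}{-x(s)}{0}{y(s)}$ shows $|X(s)u|^2=|u|^2 e^{d_\eta(\tilde\gamma(s),i)}$ whenever $u\in\R^2\setminus\{0\}$ represents the boundary point $\eta$; using additivity of $d_\eta$ to replace the reference point $i$ by $z_0$ yields $|a(s)|^2\le C_0 e^{d_{\eta_0}(\tilde\gamma(s),z_0)}$ and $|c(s)|^2\le C_1 e^{d_{\eta_1}(\tilde\gamma(s),z_0)}$. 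Combined with $\|ac^t\|_F^2=|a|^2|c|^2$ and the $x\leftrightarrow y$ symmetry between the two rank-one blocks of $K_{\res\tau}$, this collapses the HS norm to
\begin{align*}
\|\res\tau - \ress_T\tau\|_{\HS}^2 \;=\; \tfrac{1}{2}\int_T^1 |c(y)|^2 \int_0^y |a(x)|^2 \, dx\, dy.
\end{align*}

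For Part 1 the bounds (\ref{pathbnd1}) apply on all of $[0,1)$, giving $|a(x)|^2 \le Ce^b (1-x)^{-\nu(\alpha+\eps)}$ and $|c(y)|^2 \le Ce^b (1-y)^{\nu(\alpha-\eps)}$. The inner integral splits into two cases: when $\nu(\alpha+\eps)<1$ it is uniformly bounded in $y$, and the outer $y$-integral produces $(1-T)^{1+\nu(\alpha-\eps)}$; when $\nu(\alpha+\eps)>1$ it is of order $(1-y)^{1-\nu(\alpha+\eps)}$, and the outer integral produces $(1-T)^{2-2\nu\eps}$. Using the equivalence $\nu(\alpha+\eps)\lessgtr 1 \iff \nu(\alpha-\eps)\lessgtr 1-2\nu\eps$, the resulting exponent in each case is exactly $1+\min(\nu(\alpha-\eps),\,1-2\nu\eps)$, giving (\ref{HStail1}). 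The hypothesis $\alpha\neq\nu^{-1}-\eps$ excludes the borderline case $\nu(\alpha+\eps)=1$ which would introduce a logarithmic factor, while $\nu\eps<1$ ensures convergence of the outer integral in the second case.

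For Part 2 the bound (\ref{linear2}) is only valid on $\{s\le T\}$, so we extend it via the standard inequality $|d_\eta(z_1,z_0)-d_\eta(z_2,z_0)|\le d_\HH(z_1,z_2)$. Together with $d_\HH(\tilde\gamma(s),\tilde\gamma(T))\le M$ for $s>T$, this gives $|a(x)|^2\le Ce^{b+M}(1-T)^{-\nu(\alpha+\eps)}$ and $|c(y)|^2\le Ce^{b+M}(1-T)^{\nu(\alpha-\eps)}$ on $\{s>T\}$. We then decompose the integration domain into $\{0\le x\le T<y<1\}$ (using the original bound on $a$ and the extended bound on $c$) and $\{T<x<y<1\}$ (using the extended bounds on both), and rerun the case analysis from Part 1 on each piece. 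The first piece contributes $\le Ce^M(1-T)^{1+\min(\nu(\alpha-\eps),\,1-2\nu\eps)}$, while the second contributes $\le Ce^{2M}(1-T)^{2-2\nu\eps}$; since $2-2\nu\eps\ge 1+\min(\nu(\alpha-\eps),\,1-2\nu\eps)$ and $1-T\le 1$, both are bounded by $Ce^{2M}(1-T)^{1+\min(\nu(\alpha-\eps),\,1-2\nu\eps)}$, establishing (\ref{HStail2}). The main obstacle is the case split driven by the sign of $\nu(\alpha+\eps)-1$ in the inner integral, and in Part 2 the careful bookkeeping of where each extended bound is applied so that the factor $e^M$ compounds to exactly $e^{2M}$ in the worst subregion rather than a higher power.
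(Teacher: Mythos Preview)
Your proposal is correct and follows essentially the same route as the paper: both reduce the truncated HS norm to $\tfrac12\int_T^1|c(y)|^2\int_0^y|a(x)|^2\,dx\,dy$ via the identity $|X(s)u|^2=|u|^2 e^{d_\eta(\tilde\gamma(s),i)}$, insert the bounds $|a(s)|^2\le C(1-s)^{-\nu(\alpha+\eps)}$ and $|c(s)|^2\le C(1-s)^{\nu(\alpha-\eps)}$ coming from (\ref{pathbnd1}), and then split on the sign of $\nu(\alpha+\eps)-1$ to obtain the exponent $1+\min(\nu(\alpha-\eps),1-2\nu\eps)$. For Part~2 both arguments freeze the bounds at $t=T$ via $|d_\eta(\tilde\gamma(t),z_0)-d_\eta(\tilde\gamma(T),z_0)|\le d_\HH(\tilde\gamma(t),\tilde\gamma(T))\le M$ and redo the same integration; you keep slightly sharper track of where one versus two factors of $e^M$ appear, while the paper simply bounds both pieces by $C e^{2M}$ from the start, but this is a cosmetic difference.
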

\begin{proof}
We denote the representation of $\gamma$ in the half plane by $x+i y$ and use $\tilde x+i \tilde y$ for the representation of $\tilde \gamma$. We represent $\eta_0, \eta_1$ with
nonzero vectors $u_0, u_1$ that satisfy $u_0^t J u_1=1$. Recall the integral kernel of $\res \tau$ from (\ref{HSkernel}). From the definition of $\ress_T \tau$ we get
\begin{align*}
2\|\res \tau-\ress_T \tau\|_{\HS}^2&=\int_T^1  \int_s^1  |a(s)|^2 |c(t)|^2 dt ds+ \int_0^T \int_T^1  |a(s)|^2 |c(t)|^2 ds dt,
\end{align*}
where $a(s)=X(s)u_0$ and $c(s)= X(s) u_1$ with $X=\tfrac{1}{\sqrt{\tilde y}}\mat{1}{-\tilde x}{0}{\tilde y}$.

From (\ref{horo}) one can check that if   $u\in \R^2$ is a nonzero vector then $e^{d_u(x+i y,i)}=\frac{|X u|^2}{|u|^2}$.
Using the triangle inequality we get
\begin{align*}
|a(s)|^2&=|X(s)u_0|^2=|u_0|^2 e^{d_{\eta_0}(\tilde \gamma,i)}\le |u_0|^2 e^{d_{\eta_0}(\tilde \gamma,z_0)+d_{\HH}(z_0,i)},\\
|c(s)|^2&=|X(s)u_1|^2=|u_1|^2 e^{d_{\eta_1}(\tilde \gamma,i)}\le |u_1|^2 e^{d_{\eta_1}(\tilde \gamma,z_0)+d_{\HH}(z_0,i)}.
\end{align*}
Using now (\ref{pathbnd1}) and recalling $\tilde \gamma(t)=\gamma(\nu \log(\tfrac{1}{1-t}))$ we get that if  (\ref{linear2}) holds for all $t$ then
%
%
\begin{align}\label{HSacbounds}
|c(t)|^2\le C  (1-t)^{(\alpha-\eps)\nu}, \quad |a(t)|^2\le C (1-t)^{-(\alpha+\eps)\nu},
\end{align}
where $C$ depends on $u_0, u_1$ and $b$.
This leads to
\begin{align*}
2\|\res \tau-\ress_T \tau\|_{\HS}^2&\le  C^2 \int_T^1  \int_s^1  (1-t)^{(\alpha-\eps)\nu}  (1-s)^{-(\alpha+\eps)\nu} dt ds \\[5pt]&\qquad+C^2 \int_0^T \int_T^1(1-t)^{(\alpha-\eps)\nu}  (1-s)^{-(\alpha+\eps)\nu}ds
dt\\[5pt]
&=C^2\frac{(1-T)^{2-2\eps \nu}}{2(1+(\alpha-\eps)\nu)(1-\eps \nu)}+C^2\frac{(1-T)^{(\alpha-\eps)\nu+1 } \left((1-T)^{-\nu  (\alpha +\eps )+1}-1\right)}{(1+(\alpha-\eps)  \nu) (1-\nu  (\alpha +\eps ))}.
\end{align*}
Considering the $\nu^{-1}-\eps-\alpha>0$ and $\nu^{-1}-\eps-\alpha<0$ cases separately we get (\ref{HStail1}).

Now assume that (\ref{linear2}) holds for $0\le t\le \nu \log(\tfrac{1}{1-T})$, and $d_\HH(\tilde \gamma(t),\tilde \gamma(T))\le M$ for $t\ge T$. Then for $0\le t\le T$ we still have (\ref{HSacbounds}), while for $t\ge T$ we can use $d_\eta(x,y)\le d_\eta(x,z)+d_\HH(y,z)$ to get
\[
|c(t)|^2\le C e^{M}  (1-T)^{(\alpha-\eps)\nu}, \qquad |a(t)|^2\le C e^{M} (1-T)^{-(\alpha+\eps)\nu}, \quad \text{for $T\le t<1$.}
\]
This gives
\begin{align*}
2 \|\res \tau-\res\tau_T\|_{\HS}^2&\le C^2 e^{2M} \int_T^1  \int_s^1  (1-T)^{-2\eps \nu} dt ds \\[5pt]&\qquad+C^2 e^{2M}\int_0^T  (1-s)^{-(\alpha+\eps)\nu} ds\int_T^1(1-T)^{(\alpha-\eps)\nu} dt
\\[5pt]
&= C^2 e^{2M} (1-T)^{2-2\eps\nu}+C^2 e^{2M}\frac{(1-T)^{(\alpha-\eps)\nu+1 } \left((1-T)^{-\nu  (\alpha +\eps )+1}-1\right)}{1-\nu  (\alpha +\eps )},
\end{align*}
from which (\ref{HStail2}) follows.
\end{proof}

\begin{proposition}[Hilbert-Schmidt approximation]\label{prop:HS3}
Let $\gamma, \eta_0, \eta_1, z_0, \alpha, \nu, \tilde \gamma,  z(t)$ be as in Proposition \ref{prop:HS1},  and define $\tau$ according to (\ref{tauDir}). Assume that for some $0<T<1$ (\ref{linear2}) holds for $0\le t\le \nu \log(\tfrac{1}{1-T})$ for some $b>0$ and $0<\eps<\min(\alpha,\tfrac1{2\nu})$.

Suppose that the path $\tilde \gamma_1$ is measurable and for $0\le t\le \nu \log(\tfrac1{1-T})$ we have
\begin{align}\label{distgam}
\sinh(\tfrac12 d_{\HH}(\tilde \gamma(t), \tilde \gamma_1(t)))^2\le \min(\delta (1-t)^{-1}, M) 
\end{align}
for some $M, \delta>0$.

Consider $\tau_1=\Dirop(\{\tilde \gamma_1(t), t\ge 0\}, \eta_0, \eta_1)$,  define $\ress_T \tau$  as in Proposition \ref{prop:HS2}, and  $\ress_T  \tau_{1}$ similarly.
Then
\begin{align}\label{hs3}
\|\ress_T \tau-\ress_T \tau_{1}\|_{\HS}^2\le C (M+1) \delta.
\end{align}
with a constant $C$ depending only on $\eta_0, \eta_1, \alpha, \nu, \eps, b$.
\end{proposition}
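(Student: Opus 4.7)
}
The plan is to express the Hilbert–Schmidt norm squared as an explicit integral using the kernel formula \eqref{HSkernel}, then control the integrand via a pointwise bound of the form ``$|a(s)c(t)^t - a_1(s)c_1(t)^t|_F^2$ is small times a weight $|a(s)|^2|c(t)|^2$.'' Truncating and writing out the kernel, we get
\begin{align*}
\|\ress_T\tau - \ress_T\tau_1\|_{\HS}^2 = \tfrac12 \int_0^T\!\!\int_s^T |a(s)c(t)^t - a_1(s)c_1(t)^t|_F^2\,dt\,ds,
\end{align*}
where $a=Xu_0, c = Xu_1$ are defined from $\tilde\gamma$ and $a_1, c_1$ from $\tilde\gamma_1$. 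By adding and subtracting $a(s)c_1(t)^t$ and using $|Ab^t|_F = |A||b|$ for rank-one matrices, we get
\begin{align*}
|a(s)c(t)^t - a_1(s)c_1(t)^t|_F^2 \le 2|a(s)|^2 |c(t)-c_1(t)|^2 + 2|c_1(t)|^2|a(s)-a_1(s)|^2.
\end{align*}

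The heart of the argument is a pointwise bound that converts the hyperbolic distance between $\tilde\gamma$ and $\tilde\gamma_1$ into a comparison of the $X$-images. Since $X, X_1$ are both upper-triangular with determinant $1$, so is $g := X_1 X^{-1}$, and an explicit calculation gives
\begin{align*}
g = \mat{\sqrt{\tilde y/\tilde y_1}}{(\tilde x-\tilde x_1)/\sqrt{\tilde y \tilde y_1}}{0}{\sqrt{\tilde y_1/\tilde y}}, \qquad \|g-I\|_F^2 \le \frac{(\tilde x-\tilde x_1)^2+(\tilde y-\tilde y_1)^2}{\tilde y \tilde y_1} = 4\sinh^2\!\bigl(\tfrac12 d_{\HH}(\tilde\gamma,\tilde\gamma_1)\bigr),
\end{align*}
using the standard formula $\cosh d_{\HH} = 1 + \tfrac{(\tilde x-\tilde x_1)^2+(\tilde y-\tilde y_1)^2}{2\tilde y\tilde y_1}$. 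Since $a_1 - a = (g-I)a$ and $c_1 - c = (g-I)c$, this yields $|a(s)-a_1(s)|^2 \le 4\psi(s)|a(s)|^2$ and $|c(t)-c_1(t)|^2 \le 4\psi(t)|c(t)|^2$, where $\psi(r) := \sinh^2(\tfrac12 d_{\HH}(\tilde\gamma(r),\tilde\gamma_1(r)))$. Computing the singular values of $g^t g$ (its trace is $2\cosh d_{\HH}$ and its determinant is $1$) gives $\|g\|_{op}^2 = e^{d_{\HH}}$, so the hypothesis $\psi \le M$ also yields $|c_1(t)|^2 \le e^{d_{\HH}}|c(t)|^2 \le C(1+M)|c(t)|^2$.

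Combining these bounds gives
\begin{align*}
|a(s)c(t)^t - a_1(s)c_1(t)^t|_F^2 \le C(1+M)\,|a(s)|^2 |c(t)|^2 \bigl(\psi(s)+\psi(t)\bigr).
\end{align*}
The weight $|a(s)|^2 |c(t)|^2$ is controlled by \eqref{HSacbounds} (the hypothesis \eqref{linear2} holds on the relevant range), giving $|a(s)|^2 \le C(1-s)^{-(\alpha+\eps)\nu}$ and $|c(t)|^2 \le C(1-t)^{(\alpha-\eps)\nu}$. Feeding in the assumed bound $\psi(r) \le \delta/(1-r)$ then reduces the estimate to the two elementary integrals
\begin{align*}
\int_0^T\!\!\int_s^T (1-s)^{-(\alpha+\eps)\nu-1}(1-t)^{(\alpha-\eps)\nu}\,dt\,ds,\qquad \int_0^T\!\!\int_s^T (1-s)^{-(\alpha+\eps)\nu}(1-t)^{(\alpha-\eps)\nu-1}\,dt\,ds.
\end{align*}
In each case the inner integral against $(1-t)^{(\alpha-\eps)\nu}$ or $(1-t)^{(\alpha-\eps)\nu-1}$ produces $C(1-s)^{(\alpha-\eps)\nu+1}$ or $C(1-s)^{(\alpha-\eps)\nu}$, so the outer integrand is $\le C(1-s)^{-2\eps\nu}$, which is integrable on $[0,1]$ precisely because $\eps < 1/(2\nu)$. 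Multiplying by $C(1+M)\delta$ yields the claimed bound \eqref{hs3}.

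The only genuinely non-routine step is the pointwise comparison $|a-a_1|^2 \le 4\sinh^2(d_{\HH}/2)|a|^2$; once one sees that the right object to look at is $g=X_1 X^{-1}$ and that this matrix is upper triangular with determinant $1$, its operator-theoretic behaviour is pinned down entirely by the hyperbolic distance $d_{\HH}(\tilde\gamma,\tilde\gamma_1)$. After that the proof is just a triangle inequality followed by the same integrability calculation that appeared in Proposition \ref{prop:HS2}.
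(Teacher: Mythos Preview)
Your proof is correct and shares the paper's central idea: compare $X$ and $X_1$ through $g=X_1X^{-1}$ and bound $\|g-I\|_F^2\le 4\sinh^2(\tfrac12 d_\HH)$, then reduce to the same power-law integrals as in Proposition~\ref{prop:HS2}. The difference lies in how the rank-one difference is decomposed. The paper writes $ac^t-a_1c_1^t=a(c-c_1)^t+(a-a_1)c^t-(a-a_1)(c-c_1)^t$, obtaining three terms all expressed against $|a|^2|c|^2$; the cross term $\psi(s)\psi(t)$ is then controlled using $\psi\le\min(\delta(1-t)^{-1},M)$ in both factors. You instead use the two-term split $a(c-c_1)^t+(a-a_1)c_1^t$, which leaves a factor $|c_1|^2$ rather than $|c|^2$; you absorb this via the clean observation that $\|g\|_{\mathrm{op}}^2=e^{d_\HH}\le 2+4M$, so $|c_1|^2\le C(1+M)|c|^2$. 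Your route is slightly more direct (two integrals instead of three, and the $M$-dependence enters once as a global multiplicative factor rather than through a cross term), while the paper's route avoids computing the singular values of $g$. Both arrive at the same endpoint with the same dependence on the parameters.
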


\begin{proof}
Denote the representation of $\tilde \gamma_1$ in the half plane by $\tilde x_1+i \tilde y_1$.

The hyperbolic distance formula in the upper half plane representation gives
\[
4\sinh(\tfrac12 d_{\HH}(\tilde \gamma, \tilde \gamma_1))^2= \frac{(\tilde x-\tilde x_1)^2}{ \tilde y \tilde y_1}+ \left(\sqrt{\tfrac{\tilde y}{\tilde y_1}}-\sqrt{\tfrac{\tilde y_1}{\tilde y}}\right)^2.
\]
Consider $a, c$, defined as in the proof of Proposition \ref{prop:HS2} and the analogously defined $a_1, c_1$.

If  $u\in \R^2$ is a nonzero vector, $X=\tfrac{1}{\sqrt{\tilde y}}\mat{1}{-\tilde x}{0}{\tilde y}$ and $X_1=\tfrac{1}{\sqrt{\tilde y_1}}\mat{1}{-\tilde x_1}{0}{\tilde y_1}$ then
\begin{align*}
\frac{|X u-X_1 u|}{|X u|}=\frac{|(I-X_1X^{-1})X u|}{|X u|}\le \|I-X_1 X^{-1}\|_2^2.
\end{align*}
An explicit computation gives
\begin{align*}
 \|I-X_1 X^{-1}\|_2^2&=\left(\frac{x-x_1}{\sqrt{\tilde y \tilde y_1}}\right)^2+\left(1-\sqrt{\frac{\tilde y_1}{\tilde y}}\right)^2+\left(1-\sqrt{\frac{\tilde y}{\tilde y_1}}\right)^2\\
 &\le\left(\frac{\tilde x-\tilde x_1}{\sqrt{\tilde y \tilde y_1}}\right)^2+\left(\sqrt{\tfrac{\tilde y}{\tilde y_1}}-\sqrt{\tfrac{\tilde y_1}{\tilde y}}\right)^2=4\sinh(\tfrac12 d_{\HH}(\tilde x+i \tilde y, \tilde x_1+i \tilde y_1))^2.
 \end{align*}
This yields
\begin{align*}
\frac{|c- c_1|^2}{|c|^2}\le 4\sinh(\tfrac12 d_{\HH}(\tilde \gamma, \tilde \gamma_1))^2,\qquad
\frac{|a- a_1|^2}{|  a|^2}\le4\sinh(\tfrac12 d_{\HH}(\tilde \gamma, \tilde \gamma_1))^2.
\end{align*}
To compute $\|\ress_T \tau-\ress_T \tau_{1}\|_{\HS}^2$ we need to estimate $\tr(\Delta K(s,t) \Delta K(s,t)^t)$ where $\Delta K(s,t)=\tfrac12 \left(a(s) c(t)^t- a_1(s)  c_1(t)^t\right)$. Using the Cauchy-Schwarz inequality we obtain
\begin{align*}
&4 \tr(\Delta K(s,t) \Delta K(s,t)^t)\\
&\qquad\le 3 \left( | a(s)|^2 | |c(t)- c_1(t)|^2+ |a(s)- a_1(s)|^2 |c(t)|^2+ |a(s)- a_1(s)|^2  |c(t)- c_1(t)|^2\right).
\end{align*}
The previous estimates with (\ref{distgam}) yield
\begin{align*}
&\|\ress_T \tau-\ress_T \tau_{1}\|_{\HS}^2= 2  \int_0^{T} \int_0^t   \tr(\Delta K(s,t) \Delta K(s,t)^t) ds dt\\
&\qquad \le 24 \int_0^{T} \int_0^t |a(s)|^2 |c(t)|^2( \sinh(\tfrac{d_H(s)}{2})^2 +\sinh(\tfrac{d_H(t)}{2})^2+ \sinh(\tfrac{d_H(s)}{2})^2 \sinh(\tfrac{d_H(t)}{2})^2 ) ds dt\\
&\qquad \le 24 \int_0^{T} \int_0^t |a(s)|^2 |c(t)|^2( \delta (1-t)^{-1}+\delta (1-s)^{-1}+\delta^2 (1-t)^{-1} (1-s)^{-1}) ds dt.
\end{align*}
Using the arguments in the proof of Proposition \ref{prop:HS2} we get that (\ref{HSacbounds}) holds for $0\le t\le T$, with a constant $C_1$ depending on $u_0, u_1$ and $b$.  Using this together with the bound $\delta (1-t)^{-1}\le M$ (which holds for  $t\in [0,T]$  by assumption) we get
\begin{align*}
&\|\ress_T \tau-\ress_T \tau_{1}\|_{\HS}^2\\
&\quad \le  24 C_1^2 \int_0^{T} \int_0^t (1-s)^{-(\alpha+\eps)\nu} (1-t)^{(\alpha-\eps)\nu}
( \delta (1-t)^{-1}+\delta (1-s)^{-1}+\delta^2 (1-t)^{-1} (1-s)^{-1}) ds dt\\
&\quad \le 24 C_1^2 \delta (1+\tfrac{M}{2})\int_0^{T} \int_0^t  \left((1-s)^{-(\alpha+\eps)\nu-1} (1-t)^{(\alpha-\eps)\nu}+(1-s)^{-(\alpha+\eps)\nu} (1-t)^{(\alpha-\eps)\nu-1}   \right)ds dt\\
&\quad \le24 C_1^2  \delta (2+M)\tfrac{1}{(\alpha-\eps)\nu (1-2\eps\nu)} .
\end{align*}
The bound (\ref{hs3})  now follows with the choice  $C= \tfrac{48 C_1^2}{(\alpha-\eps)\nu (1-2\eps\nu)} $.
\end{proof}

\section{Proof of Theorem \ref{thm:main}} \label{s:proof}

We can now return to the proof of our main theorem.
\begin{proof}[Proof of Theorem \ref{thm:main}]
Let $\cB$ be a hyperbolic Brownian motion. Set $\tilde \cB(t)=\cB(-\tfrac4{\beta}\log(1-t))$ for $0\le t<1$. Set $\eta_0=\infty$, $\eta_1=\cB(\infty)$ and set $\Sineop=\Dirop(\tilde \cB, \eta_0, \eta_1)$. 

Let $U$ be a uniform random variable on $[0,1]$ independent of $\cB$ and let  $\tau_{n,k}$ be  the stopping times constructed in Proposition \ref{prop:stoptimes}. According to the proposition the path $\tilde \cB_n(t)=\cB(\tau_{n,\lceil (1-t)n \rceil}), t\in [0,1)$ has the same distribution as the path in the construction of $\Circop$ in Theorem \ref{thm:circop}, and we may write $\Circop=\Dirop(\tilde \cB_n, \eta_0, \eta_1)$.

To prove the bound (\ref{mainbound})  we set $T_n=1-\frac{\log^6 n}{n}$. Recall the definition of $\ress_T$ from Section \ref{s:HS}  and write
\begin{align}\label{SC}
 \|\res \Sineop - \res \Circop\|^2_{\HS}&\le 3  \|\ress_{T_n} \Sineop - \ress_{T_n} \Circop\|^2_{\HS}\\&\quad +3  \|\res \Sineop - \ress_{T_n} \Sineop\|^2_{\HS}+3  \|\res \Circop - \ress_{T_n} \Circop\|^2_{\HS}. \notag
\end{align}
We will use Propositions \ref{prop:path}, \ref{prop:HS2} and \ref{prop:HS3} to estimate the three terms on the right.
Let $z(t)$ be the point moving with speed $1/2$ on the geodesic connecting   $\cB(0)$ to $\cB(\infty)$.  From Lemma \ref{lem:hypBMspeed} of the Appendix  it follows that for any $\eps>0$ there is a random $b=b_\eps$ so that a.s.~for all $t\ge 0$ we have
\begin{align}\label{Bz}
d_\HH(\cB(t), z(t))\le \eps t+b.
\end{align}
Applying the first statement of Proposition \ref{prop:HS2} for $\Sineop=\Dirop(\tilde \cB(t),  \eta_0, \eta_1)$ with $\gamma=\cB$, $\nu=\tfrac4{\beta}$, $\alpha=1/2$ and an $\eps>0$ satisfying $0<\eps <\min(\frac1{4}, \frac{\beta}{16})$ we get
\[
 \|\res \Sineop - \ress_{T_n} \Sineop\|^2_{\HS}\le C \left(\frac{\log^6 n}{n}\right)^{1+\min(\tfrac1{\beta},\tfrac12)}
\]
with a random $C$ depending on $\cB$ and $\beta$.

Recall from Proposition \ref{prop:path} that for $n\ge N^*$ we have
\begin{align}\label{pathtB1}
d_\HH(\tilde \cB(t),\tilde \cB_n(t))&\le  \frac{\log^{3-1/8} n}{\sqrt{(1-t) n}}\le 1, \qquad \text{if} \quad 0\le t\le T_n,\\
d_\HH(\tilde \cB_n(T_n),\tilde \cB_n(t))&\le \frac{144}{\beta}(\log \log n)^2, \qquad \text{if} \quad T_n\le t<1.\label{pathtB2}
\end{align}
Let $\cB_n(t)=\tilde \cB_n( 1-e^{-\tfrac{\beta}{4} t})$, then $\tilde \cB_n(t)=\cB_n(\tfrac{4}{\beta}\log(\tfrac1{1-t}))$ for $0\le t<1$.
From (\ref{Bz}), (\ref{pathtB1}) and the triangle inequality we get
\[
d_\HH(\cB_n(t), z(t))\le \eps t+b+1, \text{ for $0\le t\le \tfrac{4}{\beta}\log(\tfrac1{1-T_n})$.}
\]
Recall that  $\Circop=\Dirop(\tilde \cB_n(t),  \eta_0, \eta_1)$.  Applying the second statement of Proposition \ref{prop:HS2} with $\gamma=\cB_n$, $\nu=\tfrac4{\beta}$, $\alpha=1/2$, $M=\frac{144}{\beta}(\log \log n)^2$, $T=T_n$ and an $\eps>0$ satisfying $0<\eps <\min(\frac1{4}, \frac{\beta}{16})$ we get
\[
 \|\res \Circop - \ress_{T_n} \Circop\|^2_{\HS} \le  C e^{\frac{288}{\beta}(\log \log n)^2} \left(\frac{\log^6 n}{n}\right)^{1+\min(\tfrac1{\beta},\tfrac12)} \qquad \text{ if $n\ge N^*$,}
\]
with a random $C$ depending on $\cB$ and $\beta$.

Since $\sinh(x/2)^2\le  x^2$ for $0\le x\le 1$,  from (\ref{pathtB1}) we get
\[
\sinh(\tfrac12 d_H( \cB(t), \cB_n(t)) )^2\le  \frac{\log^{6-1/4} n}{n} (1-t)^{-1}  \le 1, \quad \text{if} \quad 0\le t\le T_n.
\]
Hence we may use Proposition \ref{prop:HS3} with $\gamma=\cB$, $\gamma_1=\cB_n$, $\nu=\tfrac4{\beta}$, $\alpha=1/2$, $T=T_n$, $\delta= \frac{\log^{6-1/4} n}{n}$, $M=1$ and $0<\eps<\min(\tfrac12, \tfrac{\beta}{2})$ to get
\[
\|\ress_{T_n} \Sineop - \ress_{T_n} \Circop\|^2_{\HS}\le C  \frac{\log^{6-1/4} n}{n}\quad \text{ if $n\ge N^*$,}
\]
with a random   $C$ depending on $\cB$ and $\beta$. Collecting our estimates, going back to (\ref{SC}) and modifying the random lower bound $N^*$ appropriately we get
\[
 \|\res \Sineop - \res \Circop\|^2_{\HS}\le \frac{\log^6 n}{n}, \qquad \text{for $n\ge N^*$}.
\]
This completes the proof of Theorem \ref{thm:main}.
\end{proof}

Before proving the three statements of Corollary \ref{cor:cpl} we state a law of large numbers for the points of $\Sineb$. 
\begin{proposition}\label{prop:LLN}
Suppose that the points of $\Sineb$ are given by $\lambda_k, k\in \ZZ$  with $\lambda_0<0<\lambda_1$. Then with probability one we have
\begin{align}\label{sineLLN}
\lim_{k\to \infty} \frac{\lambda_k}{k}=\lim_{k\to -\infty} \frac{\lambda_k}{k}=2\pi.
\end{align}
\end{proposition}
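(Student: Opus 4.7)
The plan is to reduce the claim to a law of large numbers for the counting function $\mathcal{N}(\lambda) = \#\{k \in \ZZ: 0 < \lambda_k \le \lambda\}$ of the $\Sineb$ process, namely that $\mathcal{N}(\lambda)/\lambda \to 1/(2\pi)$ a.s.\ as $\lambda \to \infty$. Once this is in hand, the inversion $\mathcal{N}(\lambda_k) = k$ (valid for $k \ge 1$) together with $\lambda_k \to \infty$ a.s.\ (since $\Sineb$ has no accumulation points) yields $k/\lambda_k \to 1/(2\pi)$, and the symmetry of $\Sineb$ under $x \mapsto -x$ (inherited from the symmetry of the gaussian $\beta$-ensemble whose bulk limit is $\Sineb$) handles $k \to -\infty$ by the same argument applied to $\{-\lambda_k\}$.

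First I would invoke the phase-function representation of $\mathcal{N}(\lambda)$ established in \cite{BVBV}, which writes $\mathcal{N}(\lambda)$ for $\lambda > 0$ as $(2\pi)^{-1}$ times the terminal winding angle $\alpha_\lambda(1)$ of an SDE driven by the hyperbolic Brownian motion underlying $\Sineop$. From this SDE I would extract the two moment estimates
\begin{align*}
\ev \mathcal{N}(\lambda) &= \lambda/(2\pi) + O(1), \\
\Var \mathcal{N}(\lambda) &= O(\log(2+\lambda)),
\end{align*}
uniformly in $\lambda \ge 0$. The mean formula reflects the deterministic drift of the carousel being proportional to $\lambda$, while the logarithmic variance bound comes from estimating the quadratic variation of the martingale part of $\alpha_\lambda$.

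Next I would apply Chebyshev's inequality along the dyadic sequence $\lambda = 2^n$, yielding $P(|\mathcal{N}(2^n) - 2^n/(2\pi)| > \eps\, 2^n) \le C\, n /(\eps^2 \, 2^{2n})$, which is summable in $n$. Borel--Cantelli then forces $\mathcal{N}(2^n)/2^n \to 1/(2\pi)$ almost surely. Monotonicity of $\mathcal{N}$ sandwiches $\mathcal{N}(\lambda)/\lambda$ for $\lambda \in [2^n, 2^{n+1}]$ between $\mathcal{N}(2^n)/2^{n+1}$ and $\mathcal{N}(2^{n+1})/2^n$, both tending to $1/(2\pi)$, which upgrades the a.s.\ limit to the continuous parameter $\lambda \to \infty$.

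The main obstacle I anticipate is the variance bound: the rest of the argument (Chebyshev, Borel--Cantelli, monotone interpolation, the final inversion to pointwise asymptotics) is essentially cosmetic, but the logarithmic control on $\Var \mathcal{N}(\lambda)$ requires genuine input from the carousel SDE, specifically an It\^o-type computation using the explicit generator of the hyperbolic angle process that drives $\alpha_\lambda$. This ingredient is however already available in the existing $\Sineb$ literature, so the proposition should reduce to a citation together with the bookkeeping described above.
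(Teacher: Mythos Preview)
Your proposal is correct and follows a genuinely different route from the paper. The paper dispatches the proposition in two lines by citing the large deviation principle of \cite{HV}: the normalized counting function $\tfrac{1}{\lambda}\#\{\lambda_k:0\le\lambda_k\le\lambda\}$ satisfies an LDP at scale $\lambda^2$ with rate function minimized uniquely at $1/(2\pi)$, so Borel--Cantelli along any polynomially growing sequence is immediate. You instead propose a second-moment argument: exact mean and logarithmic variance for $\mathcal{N}(\lambda)$ from the carousel SDE, Chebyshev along dyadics, monotone interpolation, and inversion. Both approaches outsource the substantive input to the existing $\Sineb$ literature and both finish with Borel--Cantelli plus the $x\mapsto -x$ symmetry; the difference is that the LDP gives exponential tail bounds (overkill here) while your variance estimate is the minimal input needed. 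Your route is slightly more self-contained in spirit, since the moment bounds can be read off the It\^o expansion of the phase process more directly than the full LDP machinery, but the paper's citation is shorter to write down.
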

\begin{proof}
We will show the statement for $k\to \infty$, by the symmetry of the $\Sineb$ process this is sufficient. In  \cite{HV} it was shown that 
 $\tfrac1{\lambda} \#\{\lambda_k: 0\le \lambda_k\le \lambda\}$ satisfies a large deviation principle as $\lambda\to \infty$ with scale $\lambda^2$ and  rate function  $\beta I(\rho)$, where  $I(\frac{1}{2\pi})=0$ is the global minimum. From this the statement follows by a standard Borel-Cantelli argument. 
\end{proof}

\begin{proof}[Proof of Corollary \ref{cor:cpl}]
The bound (\ref{HSev}) follows directly from (\ref{mainbound}) and the Hoffman-Wielandt inequality for compact integral operators (see e.g.~\cite{BhatiaElsner}).

From Proposition \ref{prop:LLN} we see that there is a random constant $C$ so that a.s.~$|\lambda_k|\le C (|k|+1)$ for all $k$.
Set $a_n=\frac{n^{1/4}}{\log^{2} n}$. From (\ref{mainbound}) it follows that for large enough $n$ we have
\begin{align}\label{000}
\sup_{k} |\lambda_k^{-1}-\lambda_{k,n}^{-1}|\le \sqrt{\sum_{k} |\lambda_k^{-1}-\lambda_{k,n}^{-1}|^2}\le \frac{\log^{3} n}{n^{1/2}}.
\end{align}
Let $b_{k,n}=\lambda_k^{-1}-\lambda_{k,n}^{-1}$, then
\begin{align}\label{egy}
(\lambda_{k,n}-\lambda_k)(1-b_{k,n}  \lambda_k)=b_{k,n}  \lambda_k^2.
\end{align}
For large enough $n$ we have
\[
\max_{|k|\le a_n} |b_{k,n} \lambda_k| \le  \frac{\log^{3} n}{n^{1/2}} C (a_n+1)=C\left( \frac{\log n}{n^{1/4}}+ \frac{\log^{3} n}{n^{1/2}}\right)
\]
which means that $\lim\limits_{n\to \infty} \max_{|k|\le a_n} |b_{k,n} \lambda_k|=0$ a.s. From (\ref{egy}), for large enough $n$, we have
\[
\max_{|k|\le a_n}|\lambda_{k,n}-\lambda_k|\le 2 C^2 (a_n+1)^2 \max_{|k|\le a_n} |b_{k,n}| \le  4 C^2 a_n^2 \frac{\log^{3} n}{n^{1/2}}=\frac{4 C^2 }{\log n}.
\]
This completes the proof of (\ref{cpl}).

By Proposition \ref{prop:LLN} we may  choose a random $C>0$ so that $|\lambda_k| \le C \sqrt{1+k^2}$ for all $k$. Then we have $\frac{1}{C\sqrt{1+k^2}}\le \frac{1}{|\lambda_k|}$ and for $n\ge N$, $|k|\le n^{1/2-\eps}$ we have
\[
 \frac{1}{|\lambda_{k,n}|}\ge \frac{1}{|\lambda_k|}-\left|\frac{1}{\lambda_k}-\frac{1}{\lambda_{k,n}}   \right|\ge \frac{1}{C\sqrt{1+k^2}}-\frac{\log^3 n}{\sqrt{n}}\ge \frac{1}{2C \sqrt{1+k^2}}.
\]
(For the last inequality one might need to change $N$.) Then for such $k$ and $n$ we get
\[
|\lambda_k-\lambda_{k,n}|=\left|\frac{1}{\lambda_k}-\frac{1}{\lambda_{k,n}}   \right| |\lambda_k| |\lambda_{k,n}|\le \frac{\log^3 n}{n^{1/2}} 2 C^2 (1+k^2)\le \frac{1+k^2}{n^{1/2-\eps}},
\]
again by setting a large enough random lower bound on $n$. 
\end{proof}

\section{Estimating the dependence on $\beta$ in $\Sineop$}\label{s:beta}

The construction of the $\Sineop$ operator provides a natural coupling of this operator for all values of $\beta$.  The techniques we developed for the proof of Theorem \ref{thm:main} can be used to estimate how the operator $\Sineop$  (and the process $\Sineb$) depends on the value of $\beta$ in this coupling.

\begin{proof}[Proof of Theorem \ref{thm:betadep}]
We have
\[
\Sineop=\Dirop(\tilde \gamma, \eta_0, \eta_1), \qquad \mathtt{Sine}_{\beta_1}=\Dirop(\tilde \gamma_1, \eta_0, \eta_1)
\]
where $\tilde \gamma(t)=\cB(\tfrac{4}{\beta}\log(\tfrac1{1-t}))$, $\tilde \gamma_1(t)=\cB(\tfrac{4}{\beta_1} \log(\tfrac1{1-t}))$, $\eta_0=\infty$ and $\eta_1=\cB(\infty)$. 
We will estimate $ \|\res \Sineop-\res \mathtt{Sine}_{\beta_1}   \|_{\HS}  $ using Propositions \ref{prop:HS2} and \ref{prop:HS3}. Note that with probability one $\eta_0\neq \eta_1$ and hence a.s.~0 is not an eigenvalue for any of the $\Sineop$ operators.

Set $T=1-\delta<1$. Cutting off $\Sineop$ and $\mathtt{Sine}_{\beta_1}$ at $T$  using the first statement of Proposition \ref{prop:HS2} gives
\[
\|\res \Sineop-\ress_T \Sineop\|^2_{\HS}+\| \res \mathtt{Sine}_{\beta_1}-\ress_T \mathtt{Sine}_{\beta_1}\|^2_{\HS}\le C_0 \delta^{1+\min(4 \kappa,1)-\eps}
\]
for a small enough $\eps>0$ with a random $C_0$. (Recall that $\kappa\le \beta^{-1}<\beta_1^{-1}$.)

To estimate $\| \ress_T \mathtt{Sine}_{\beta}-\ress_T \mathtt{Sine}_{\beta_1}\|^2_{\HS}$ we first bound $d_\HH(\tilde \gamma(t), \tilde \gamma_1(t))$ in $[0,T]$. For 
 $0<t\le T$ we have $(\tfrac{4}{\beta_1}-\tfrac{4}{\beta})\log(\tfrac1{1-t})\le \delta \log(\tfrac{1}{\delta})<1$ and 
by Proposition \ref{prop:modcont} (and the comment following the proposition)   we get the bound
\begin{align}\label{dh}
d_\HH(\cB(\tfrac{4}{\beta}\log(\tfrac1{1-t}), \cB(\tfrac{4}{\beta_1}\log(\tfrac1{1-t}))^2<C^2 \delta \log(\tfrac1{1-t}) \log\left(2+\frac{\tfrac{4}{\beta }\log(\frac{1}{1-t})+1}{\delta \log(\tfrac1{1-t})}\right)
\end{align}
for $0\le t\le T$ with a random $C$.  

The right side of (\ref{dh}) can be bounded by
\begin{align}
C^2  \delta \log(\tfrac1{1-t}) \left(\log\left((\tfrac{4}{\kappa}+2)\log(\tfrac{1}{\delta})+1\right)+\log\left(\tfrac{1}{\delta \log(\frac1{1-t})}\right)   \right)
\end{align}
which can further be bounded by $C^2 c_1$ with a deterministic $c_1$ depending only $\kappa$. 

Note that since $\delta<1/3$, we have
\[
\log(\tfrac1{1-t}) \log\left((\tfrac{4}{\kappa}+2)\log(\tfrac{1}{\delta})+1\right)\le c_2 \log(\tfrac1{\delta} ) (1-t)^{-1}
\]
with a deterministic $c_2$ depending only on $\kappa$.  Moreover, it is a straightforward calculus exercise to show that for $0<t\le T=1-\delta$ we have
\[
 \delta \log(\tfrac1{1-t}) \log\left(\tfrac{1}{\delta \log(\frac1{1-t})}\right) \le e^{-1/3} \delta\log(\tfrac1{\delta}) (1-t)^{-1}.
\]
This gives the bound 
\[
d_\HH(\tilde \gamma(t), \tilde \gamma_1(t))^2<C_1\min(\delta \log(\tfrac1{\delta}) (1-t)^{-1},1)
\]
with a constant depending on $C$ and $\kappa$. We can turn this into an upper bound of the form 
\[
\sinh(\tfrac12 d_{\HH}(\tilde \gamma(t), \tilde \gamma_1(t))^2\le C_2 \min(\delta \log(\tfrac1{\delta}) (1-t)^{-1},1),
\]
from which Proposition \ref{prop:HS3} gives
\[
\| \ress_T \mathtt{Sine}_{\beta}-\ress_T\mathtt{Sine}_{\beta_1}\|^2_{\HS}\le C_3 \delta \log(\tfrac1\delta).
\]
Collecting all the terms gives
\[
\| \res \mathtt{Sine}_{\beta}-\res \mathtt{Sine}_{\beta_1}\|^2_{\HS}\le C  \delta \log(\tfrac1\delta)
\]
with a random $C$. The Hoffman-Wielandt inequality finishes  the proof of (\ref{sb1}).
\end{proof}

\section{Appendix} \label{s:app}

In the first part of the Appendix we collect the proofs of Lemmas  \ref{lem:betacpl} and \ref{lem:stdom1} that were used in the single step coupling of Proposition \ref{prop:1step}. In the second part we collect some estimates on the hyperbolic Brownian motion.

\subsection{Proof of the coupling statements} \label{s:calc}

 We now return to the proof of Lemma \ref{lem:betacpl}. Let $\cB$ be a hyperbolic Brownian motion and set $\zeta=\zeta_t=d_\HH(\cB(0),\cB(t))$.

The circumference of a hyperbolic circle of radius $r$ is $2\pi \sinh(r)$ and the density function of the hyperbolic BM at time $t$ and distance $r$ is given by the following formula (see e.g. \cite{KST}):
\[
g(r,t)=\frac{\sqrt{2} e^{-t/8}}{(2\pi t)^{3/2}}  \int_r^\infty \frac{s e^{-\frac{s^2}{2t}}}{\sqrt{\cosh s-\cosh r}} ds.
\]
From this the density of $\zeta_t$ is
\begin{align}
p_{\zeta}(r,t)=2\pi \sinh(r) g(r,t)=\frac{ e^{-t/8} \sinh(r)}{\sqrt{\pi} t^{3/2}}  \int_r^\infty \frac{s e^{-\frac{s^2}{2t}}}{\sqrt{\cosh s-\cosh r}} ds, \label{zetapdf}
\end{align}
and the tail of the cumulative distribution function  is
\begin{align}\notag
1-F_{\zeta}(r)&=\int_r^\infty \sinh(u)  	\frac{ e^{-t/8}}{\sqrt{\pi} t^{3/2}}  \int_u^\infty \frac{s e^{-\frac{s^2}{2t}}}{\sqrt{\cosh s-\cosh u}} ds\,	du\\ \notag
&=\int_r^\infty  \frac{ e^{-t/8}s e^{-\frac{s^2}{2t}}}{\sqrt{\pi} t^{3/2}} \int_r^s  \frac{\sinh(u)}{\sqrt{\cosh s-\cosh u}}du\, ds\\
&=\int_r^\infty  \frac{ 2 e^{-t/8}s e^{-\frac{s^2}{2t}}}{\sqrt{\pi} t^{3/2}}  \sqrt{\cosh (s)-\cosh (r)} ds. \label{zetacdf}
\end{align}
Let $Y=Y_\gamma=\log(\frac{1+\sqrt{\xi}}{1-\sqrt{\xi}})$ where $\xi$ has distribution \textup{Beta}$(1,\gamma)$. We record the cumulative distribution function $F_Y$ and the probability density function $p_Y$  of $Y$, which follow from direct computation of the Beta distribution:
\begin{align}\label{Ycdf}
F_Y(r)=1-\sech^{2\gamma}(\tfrac{r}{2}),\qquad
p_Y(r)=\gamma  \sinh \left(\frac{r}{2}\right) \text{sech}^{2 \gamma +1}\left(\frac{r}{2}\right).
\end{align}

We start with a simple estimate.
\begin{lemma}\label{lem:cosh}
For $0\le r\le s$ we have
\begin{align}\label{coshbnd}
\frac1{\sqrt{2}}\sqrt{s^2-r^2}\le \sqrt{\cosh(s)-\cosh(r)}\le \frac1{\sqrt{2}}\sqrt{s^2-r^2}\exp\left(\frac{r^2+s^2}{24}\right)
\end{align}
\end{lemma}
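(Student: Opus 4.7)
The inequality concerns only elementary functions of one and two variables, so the natural approach is to start from the product-to-sum identity
\[
\cosh(s)-\cosh(r) = 2\sinh\!\left(\tfrac{s+r}{2}\right)\sinh\!\left(\tfrac{s-r}{2}\right),
\]
which cleanly separates the two factors $(s+r)/2$ and $(s-r)/2$ whose product is $(s^2-r^2)/4$. The plan is then to bound each $\sinh$ factor using a matching pair of elementary inequalities and take square roots at the end.

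For the lower bound I would use $\sinh(x)\ge x$ for $x\ge 0$, applied to both factors. This gives
\[
\cosh(s)-\cosh(r)\ge 2\cdot\tfrac{s+r}{2}\cdot\tfrac{s-r}{2}=\tfrac{s^2-r^2}{2},
\]
and taking square roots yields the left inequality immediately.

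For the upper bound the key fact is $\sinh(x)\le x e^{x^2/6}$ for $x\ge 0$. This follows by comparing Taylor coefficients: we need $(2k+1)!\ge 6^k k!$, i.e.\ $(k+1)(k+2)\cdots(2k+1)\ge 6^k$, which holds at $k=0,1$ with equality for $k=1$ and is easily verified for $k\ge 2$ (the ratio of the $(k+1)$-term product to the $k$-term product is $\frac{(2k+2)(2k+3)}{k+1}=2(2k+3)\ge 6$). Applying this to both $\sinh$ factors gives
\[
2\sinh\!\left(\tfrac{s+r}{2}\right)\sinh\!\left(\tfrac{s-r}{2}\right)\le \tfrac{s^2-r^2}{2}\exp\!\left(\tfrac{(s+r)^2+(s-r)^2}{24}\right)=\tfrac{s^2-r^2}{2}\exp\!\left(\tfrac{s^2+r^2}{12}\right),
\]
and taking square roots produces the stated factor $\exp((r^2+s^2)/24)$.

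There is no real obstacle here; the only step worth double checking is the Taylor comparison that gives $\sinh(x)\le x e^{x^2/6}$, but this is routine and the exponent $1/6$ is the tightest one that works, which is why the lemma gets exactly $(r^2+s^2)/24$ in the exponential after dividing by $2$ under the square root.
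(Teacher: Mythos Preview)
Your proof is correct and follows essentially the same route as the paper: both use the identity $\cosh(s)-\cosh(r)=2\sinh\!\big(\tfrac{s+r}{2}\big)\sinh\!\big(\tfrac{s-r}{2}\big)$ together with the bounds $1\le \sinh(x)/x\le e^{x^2/6}$. You additionally supply the Taylor-coefficient justification for the upper bound $\sinh(x)\le x e^{x^2/6}$, which the paper simply asserts.
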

\begin{proof}The statement follows  from the bound $1\le \frac{\sinh(x)}{x}\le e^{x^2/6}$ and
\[
\frac{{2(\cosh(s)-\cosh(r))}}{s^2-r^2}=\frac{\sinh((s-r)/2)}{(s-r)/2} \cdot \frac{\sinh((s+r)/2)}{(s+r)/2}. \qedhere
\]
\end{proof}
Applying Lemma \ref{lem:cosh} to (\ref{zetapdf}) and (\ref{zetacdf}) and computing the resulting integrals directly we get the following bounds:
\begin{align}\label{pdf1}
p_\zeta(r,t)&\ge  (1+\tfrac{t}{12})^{-1/2}\, \frac1{t} e^{-\frac{r^2}{2 t}-\frac{r^2}{12}-\frac{t}{8}} \sinh (r)  =:p_{-}(r,t)\\ \label{pdf2}
p_\zeta(r,t)&\le \frac1{t} {e^{-\frac{r^2}{2 t}-\frac{t}{8}} \sinh (r)}=:p_{+}(r,t)\\\label{cdf1}
1-F_\zeta(r)&\le {(1-\tfrac{t}{12})^{-3/2}} e^{-\frac{r^2}{2 t}+\frac{r^2}{12}-\frac{t}{8}},
\end{align}
where the last bound is valid for $0<t<12$.

We also record the following bounds on $\log \cosh(r)$ which can be readily checked by differentiation and Taylor expansion.
\begin{align}\label{coshUB}
\log \cosh(x)&\le \tfrac{x^2}{2}, \qquad\qquad \text{for all $x$}\\
\log \cosh(x)&\ge \tfrac{x^2}{2}-\tfrac{x^4}{12}, \qquad \text{for $0\le x\le 1$}\label{coshLB}
\end{align}

\begin{proof}[Proof of (\ref{lem:dom}) in Lemma \ref{lem:betacpl}]
From (\ref{Ycdf}) and (\ref{coshUB}) we get
\[
1-F_Y(r)=\exp(-2\gamma \log \cosh(r/2))\ge \exp(-\gamma \tfrac{r^2}{4})=\exp(-\tfrac{r^2}{2t}+\tfrac{r^2}{8}).
\]
One can check that if $0< t\le 1$ and $t\le r$ then
\[
  {(1-\tfrac{t}{12})^{-3/2}} e^{-\frac{r^2}{2 t}+\frac{r^2}{12}-\frac{t}{8}}\le \exp(-\tfrac{r^2}{2t}+\tfrac{r^2}{8}).
\]
Together with (\ref{cdf1}) this proves the statement for $t\le r$.

To prove the statement in the $0\le r<t$ case, we will show
\[
\int_0^r p_\zeta(u,t) du\ge \int_0^r p_Y(u) du \qquad \textup{for $0\le r<t\le 1$}
\]
using the lower bound (\ref{pdf1}) on $p_\zeta(r,t)$. We will prove that for $0<r<t\le 1$ we have
\[
p_-(r,t)= (1+\tfrac{t}{12})^{-1/2}\, \frac1{t} e^{-\frac{r^2}{2 t}-\frac{r^2}{12}-\frac{t}{8}} \sinh (r) \ge \gamma  \sinh \left(\frac{r}{2}\right) \text{sech}^{2 \gamma +1}\left(\frac{r}{2}\right)=p_Y(r).
\]
The last inequality is equivalent to
\begin{align}\label{333}
 e^{-\frac{r^2}{2 t}-\frac{r^2}{12}} \ge e^{t/8} \sqrt{1+\tfrac{t}{12}} \left(1-\tfrac{t}{4}\right)\exp\left(-(\tfrac{4}{t}+1)\log \cosh(r/2)\right).
\end{align}
For $0\le t\le 1$ we have $e^{t/8} \sqrt{1+\tfrac{t}{12}} \left(1-\tfrac{t}{4}\right)\le 1$.  Then by  the bound (\ref{coshLB})
we get
\[
e^{t/8} \sqrt{1+\tfrac{t}{12}} \left(1-\tfrac{t}{4}\right) \exp\left(-(\tfrac{4}{t}+1)\log \cosh(r/2)\right)\le \exp\left(-(\tfrac{4}{t}+1)(\tfrac{r^2}{8}-\tfrac{r^4}{192})\right).
\]
The finish the proof of (\ref{333}) we need to show that for  $0<r<t\le 1$ we have
\[
\left(\frac{4}{t}+1\right)\left(\frac{r^2}{8}-\frac{r^4}{192}\right)\ge \frac{r^2}{2 t}+\frac{r^2}{12},
\]
which follows from direct computation.
\end{proof}
Now we turn to the proof of the total variation bound.
\begin{proof}[Proof of (\ref{lem:TV}) in Lemma \ref{lem:betacpl}]
We need to show that  $\int_0^\infty |p_Y(r)-p_\zeta(r,t)|dr\le 3t$.

By part (a) we have $P(\zeta>r)\le P(Y_\gamma>r)$ which leads to
\[
\int_K^\infty |p_Y(r)-p_\zeta(r,t)|dr\le 2 (1-F_Y(K))=2 \,\sech^{2\gamma}(\tfrac{K}{2}), \qquad \text{for $K>0$}.
\]
Setting $K=2 \sqrt{t \log(2/t)}\le 2$ we get
\[
\int_K^\infty |p_Y(r)-p_\zeta(r,t)|dr\le 2 \,\sech^{2\gamma}(\tfrac{K}{2})\le 2 \exp(- 2\gamma\cdot \tfrac{5}{12}(K/2)^2)\le 2 (t/2)^{\tfrac{5}{4}}\le t,
\]
where we used  (\ref{coshLB}) in the second inequality.

Using the triangle inequality and the bound (\ref{pdf1}) we get
\[
\int_0^K |p_Y(r)-p_\zeta(r,t)|dr\le \int_0^K |p_Y(r)-p_{+}(r,t)|dr+\int_0^K (p_{+}(r,t)-p_\zeta(r,t))dr.
\]
We can bound the second integral explicitly:
\begin{align*}
\int_0^K (p_{+}(r,t)-p(r,t))dr&\le \int_0^\infty (p_{+}(r,t)-p(r,t))dr=\frac{\sqrt{\frac{\pi }{2}} e^{\frac{3 t}{8}} \left(2 \Phi\left(\sqrt{t}\right)-1\right)}{\sqrt{t}}-1\\
&\le e^{\frac{3t}{8}}-1\le t/2.
\end{align*}
Introduce
\[
p_0(r,t)=\frac{2 e^{-\frac{r^2}{2 t}} \sinh \left(\frac{r}{2}\right)}{t}.
\]
Then
\[
\frac{p_+}{p_0}=e^{-t/8} \cosh(r/2), \qquad \frac{p_Y}{p_0}=(1-t/4)e^{\frac{r^2}{2 t}} \text{sech}^{\frac{4}{t}}\left(\tfrac{r}{2}\right).
\]
We have, for $0\le t\le 1$, $0\le r\le 2$:
\[
\left|\frac{p_+}{p_0}-1\right|\le \frac{t}8+\frac{r^2}4.
\]
Using the bounds (\ref{coshUB}) and  (\ref{coshLB}) we get that for $0\le r\le K<2$ we have
\[
1=\exp\left(\tfrac{r^2}{2 t}- \tfrac{4}{t} \tfrac{r^2}{8}\right) \le e^{\frac{r^2}{2 t}} \text{sech}^{\frac{4}{t}}\left(\tfrac{r}{2}\right)\le \exp\left(\tfrac{r^2}{2 t}- \tfrac{4}{t}(\tfrac{r^2}{8}-\tfrac{r^4}{192})\right)=e^{\frac{r^4}{48 t}}
\]
This leads to
\[
\left| \frac{p_Y}{p_0}-1\right|\le \frac{t}{4}+|e^{\frac{r^4}{48t}}-1|\le \frac{t}{4}+\frac{r^4}{24t}
\]
where we used that $\frac{r^4}{t}\le t \log(2/t)^2\le 1$ if $t\le 1$. Note also that $\sinh(x)\le \frac{6}{5}x$ for $x\le 1$.
From this
\[
\int_0^K |p_Y(r)-p_{+}(r,t)|dr\le \int_0^K (\tfrac38 t+\tfrac 14 r^2+\tfrac{r^4}{24 t}) p_0(r)dr\le \frac65 \int_0^\infty (\tfrac38 t+\tfrac 14 r^2+\tfrac{r^4}{24 t})\frac{ e^{-\frac{r^2}{2 t}} r }{t}dr=\frac{29}{20}t.
\]
Collecting all our estimates gives
\begin{equation*}
\int_0^\infty |p_Y(r)-p_\zeta(r,t)|dr\le 3t. \qedhere
\end{equation*}
\end{proof}
The proof of Lemma \ref{lem:stdom1} follows a standard coupling construction, we include it for completeness.
\begin{proof}[Proof of Lemma \ref{lem:stdom1}]
Denote the distributions of $X_1, X_2$ by $\mu_1, \mu_2$, and let $\mu=\tfrac12(\mu_1+\mu_2)$. Denote that density function of $X_i$ with respect to $\mu$ by $f_i$. From our assumptions it follows that
\[
\tilde f_0(x)=\tfrac{1}{1-\eps}\min(f_1(x),f_2(x)),   \quad \tilde f_1(x)=\tfrac{1}{\eps}|f_1(x)-f_{2}(x)|^+, \quad \tilde f_2(x)=\tfrac{1}{\eps}|f_2(x)-f_{1}(x)|^+,
\]
are also  density functions with respect to $\mu$, and the  distributions corresponding to $\tilde f_1, \tilde f_2$ are stochastically ordered just as $X_1$ and $X_2$. Moreover, $(1-\eps) \tilde f_0+\eps \tilde f_i=f_i$ for $i=1,2$.

Recall that if $F$ is a cumulative distribution function,  $F^{-1}(x)=\sup\{y: F(y)<x\}$ is its generalized inverse, and $U\sim$ Uniform$[0,1]$ then $F^{-1}(U)$ has cumulative distribution function given by $F$.

Denote the cumulative distribution function  corresponding to $\tilde f_i$ by $\tilde F_i$.
Let $U_1, U_2$ be independent uniform random variables on $[0,1]$ and consider the pair of random variables
\[
(Y_1,Y_2)=\ind(U_1\le 1-\eps) (\tilde F_0^{-1}(U_2), \tilde F_0^{-1}(U_2))+\ind(U_1> 1-\eps)  (\tilde F_1^{-1}(U_2), \tilde F_2^{-1}(U_2)).
\]
In plain words: with probability $1-\eps$ we generate $(Z,Z)$ where $Z$ has density $\tilde f_0$, and with probability $\eps$ we generate $(\tilde X_1, \tilde X_2)$ where $\tilde X_i$ has density $\tilde f_i$ and $\tilde X_1\le \tilde X_2$ a.s. Then $Y_1\le Y_2$ a.s., $P(Y_1=Y_2)=1-\eps$ and $Y_i$ has the same distribution as $X_i$.
Consider the regular conditional distribution of $Y_2$ given $Y_1$, and let $g(x,u)$ be the generalized inverse of the conditional cumulative distribution function  of $Y_2$ given $Y_1=x$. Then $(X_1, g(X_1,U))$ has the same joint distribution as $(Y_1,Y_2)$, and thus $g$ satisfies the requirements of the lemma.
\end{proof}

%
%
%
%
%

\subsection{Hyperbolic Brownian motion estimates}

The first two lemmas give estimates on the behavior of the process $d_\HH(\cB(0), \cB(t))$ where $\cB$ is a (standard) hyperbolic Brownian motion.
\begin{lemma}\label{lem:hypBMcpl}
There is a coupling of a  hyperbolic Brownian motion $\cB$, a one dimensional standard Brownian motion $B$ and a 2 dimensional standard Brownian motion $W$
 so that almost surely for all $t\ge 0$ we have
\[
|B(t)|\le d_\HH(\cB(0), \cB(t))\le |W(t)|+t/2.
\]
\end{lemma}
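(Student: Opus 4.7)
The plan is to realize $\rho(t) := d_\HH(\cB(0),\cB(t))$ as the solution of the radial SDE for hyperbolic Brownian motion and then compare it pathwise against a two-dimensional Bessel process driven by the same one-dimensional Brownian motion. Since the Laplace--Beltrami operator on the hyperbolic plane in geodesic polar coordinates has radial part $\partial_r^2+\coth(r)\partial_r$, applying It\^o's formula to $d_\HH(\cB(0),\cdot)$ (together with L\'evy's characterization, using that the hyperbolic gradient of $d_\HH(\cB(0),\cdot)$ is a unit vector away from the pole) gives
\[
d\rho(t) = dB_0(t) + \tfrac{1}{2}\coth(\rho(t))\,dt,
\]
where $B_0$ is a standard one-dimensional Brownian motion. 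Let $R$ be the two-dimensional Bessel process driven by the same $B_0$, so $dR = dB_0 + \tfrac{dt}{2R}$ with $R(0)=0$.

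Both required bounds will follow from the pair of comparisons $R(t) \le \rho(t) \le R(t)+t/2$. The lower comparison uses the elementary inequality $\coth(\rho)\ge 1/\rho$ for $\rho>0$, combined with standard pathwise SDE comparison for Bessel-type drifts. For the upper one, observe that $f(t):=\rho(t)-R(t)$ has no martingale part and is absolutely continuous, with $f'(t)=\tfrac{1}{2}(\coth(\rho(t))-1/R(t))$. The elementary bound $\coth(\rho)\le 1+1/\rho$ (which follows from $e^{2\rho}-1\ge 2\rho$), together with $R\le\rho$ from the lower comparison, gives
\[
\coth(\rho) - \tfrac{1}{R} \;\le\; 1 + \tfrac{1}{\rho} - \tfrac{1}{R} \;\le\; 1,
\]
so $f'(t)\le 1/2$ almost everywhere, and $f(0)=0$ yields $f(t)\le t/2$.

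It remains to realize $R$ as the Euclidean modulus of a standard two-dimensional Brownian motion $W$. On an enlarged probability space with an independent one-dimensional Brownian motion $\tilde B$, define $\theta(t):=\int_0^t R(s)^{-1}\,d\tilde B(s)$ for $t>0$ and set $W(t):=R(t)(\cos\theta(t),\sin\theta(t))$ with $W(0):=0$; standard arguments show $W$ is a standard two-dimensional Brownian motion with $|W|\equiv R$. Let $B$ be the first coordinate of $W$, a standard one-dimensional Brownian motion satisfying $|B(t)|\le |W(t)|=R(t)$. Chaining the inequalities then gives
\[
|B(t)| \;\le\; R(t) \;\le\; \rho(t) \;\le\; R(t)+\tfrac{t}{2} \;=\; |W(t)|+\tfrac{t}{2}.
\]

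The most delicate technical points are the pathwise SDE comparison at the singular locus $\{\rho=R=0\}$ near $t=0$, and the continuity of $W$ at $0$ (the integral defining $\theta$ diverges there since $\int_0^\eps R(s)^{-2}\,ds=\infty$, but $|W(0)|=R(0)=0$ makes the angular degree of freedom irrelevant). Both are handled by standard localization arguments in Bessel-process theory.
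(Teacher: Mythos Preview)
Your proof is correct and follows essentially the same route as the paper: derive the radial SDE $d\rho=dB_0+\tfrac12\coth(\rho)\,dt$, compare against a two-dimensional Bessel process driven by the same noise using $\tfrac1x\le\coth x\le 1+\tfrac1x$, and then realize the Bessel process as $|W|$ with $B$ a coordinate of $W$. The only cosmetic differences are that the paper introduces an auxiliary process $q_2$ with drift $\tfrac{1}{2q_2}+\tfrac12$ and bounds $q\le q_2\le q_1+t/2$, whereas you bound $\rho-R$ directly; and you make the skew-product construction of $W$ from $R$ explicit, while the paper simply invokes that a two-dimensional Bessel process is the modulus of a planar Brownian motion.
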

\begin{proof}
The process $q_t=d_\HH(0,\cB(t))$ satisfies the SDE
\[
dq=db+\frac{\coth q}{2}dt, \qquad q(0)=0,
\]
where $b$ is a standard Brownian motion.

Consider the following diffusions with the same driving Brownian motion $b$ as in $q$.
\[
dq_1=db+\frac{1}{2q_1}dt, \qquad dq_2=db+\left(\frac{1}{2q_2}+\frac12\right)dt, \qquad q_1(0)=q_2(0)=1.
\]
Since $\tfrac1x\le \coth x\le \tfrac1{x}+1$ for $x>0$, we have a.s.~$q_1\le q\le q_2$. The process $q_1$ is a 2 dimensional Bessel process, it can be written as the absolute value of a 2 dimensional Brownian motion $W=(B,B')$, which is at least as large as the absolute value of its first coordinate $|B|$.

For the upper bound note that we have $q_1\le q_2$, and taking the difference of the SDEs for $q_2$ and $q_1$ we get
\[
(q_2-q_1)'=\frac{1}{2q_2}-\frac{1}{2q_1}+\frac12\le \frac12.
\]
Thus $q(t)\le q_2(t)\le q_1(t)+t/2=|W(t)|+t/2$, which finishes the proof.
\end{proof}

\begin{lemma}\label{lem:hypBMtau}
Let $\cB$ be a standard hyperbolic Brownian motion. Then for $t>0, a>0$ we have
\begin{align}\label{lowerhBM}
 P(\max_{0\le s \le t}d_\HH(\cB(0),\cB(s))\le  a)\le \frac{4}{\pi} e^{-\frac{\pi^2 t}{8 a^2}}.
\end{align}
If $0<t\le a$ then
\begin{align}\label{upperhBM}
 P(\max_{0\le s \le t}d_\HH(\cB(0),\cB(s))\ge  a)\le \frac{16\sqrt{t}}{a \sqrt{\pi}}e^{-\frac{a^2}{16t}}.
 \end{align}
%
%
%
%
%
\end{lemma}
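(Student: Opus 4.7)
The plan is to reduce both bounds to standard estimates on Euclidean Brownian motion via the coupling in Lemma \ref{lem:hypBMcpl}, under which $|B(s)| \le d_\HH(\cB(0),\cB(s)) \le |W(s)| + s/2$ almost surely, with $B$ a $1$-dimensional and $W$ a $2$-dimensional standard Brownian motion.

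For the lower bound \eqref{lowerhBM}, the event $\{\max_{s\le t} d_\HH(\cB(0),\cB(s))\le a\}$ is contained in $\{\max_{s\le t}|B(s)|\le a\}$, i.e.\ the event that $B$ stays in the interval $[-a,a]$ up to time $t$. The plan is to invoke the classical series representation for the Dirichlet heat kernel on $[-a,a]$,
\begin{align*}
P\bigl(\max_{s\le t}|B(s)|\le a\bigr) = \frac{4}{\pi}\sum_{k=0}^\infty \frac{(-1)^k}{2k+1}\exp\!\left(-\frac{(2k+1)^2 \pi^2 t}{8 a^2}\right),
\end{align*}
which is an alternating series with decreasing terms, hence bounded above by its first term $\tfrac{4}{\pi}e^{-\pi^2 t/(8 a^2)}$. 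This yields \eqref{lowerhBM} immediately.

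For the upper bound \eqref{upperhBM}, I would bound
\begin{align*}
\max_{s\le t} d_\HH(\cB(0),\cB(s)) \le \max_{s\le t}|W(s)| + t/2,
\end{align*}
so that for $t\le a$ the event $\{\max_{s\le t}d_\HH(\cB(0),\cB(s))\ge a\}$ is contained in $\{\max_{s\le t}|W(s)|\ge a/2\}$. Writing $W=(W_1,W_2)$, a union bound gives $\{\max_s|W(s)|\ge a/2\}\subseteq\{\max_s|W_1(s)|\ge a/(2\sqrt 2)\}\cup\{\max_s|W_2(s)|\ge a/(2\sqrt 2)\}$. Then I would apply the reflection principle to the one-dimensional maxima, $P(\max_{s\le t}|W_i(s)|\ge c)\le 4\,P(W_i(t)\ge c)$, and finally the standard Gaussian tail estimate $P(W_i(t)\ge c)\le \tfrac{\sqrt{t}}{c\sqrt{2\pi}}\,e^{-c^2/(2t)}$ with $c=a/(2\sqrt 2)$. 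Combining these factors produces the claimed prefactor $\tfrac{16\sqrt{t}}{a\sqrt{\pi}}$ and exponent $-a^2/(16t)$.

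There is no real obstacle here: everything follows from the coupling of Lemma \ref{lem:hypBMcpl} and elementary Brownian motion facts. The only care needed is bookkeeping the constants so that the two coordinates of $W$ together with the reflection principle and the Gaussian tail bound assemble into exactly the stated prefactor $16/\sqrt{\pi}$, and checking that the assumption $t\le a$ is used precisely to guarantee $a-t/2\ge a/2$ in the upper-bound reduction.
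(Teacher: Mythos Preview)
Your proposal is correct and follows essentially the same approach as the paper: both halves reduce to Euclidean Brownian motion via the coupling of Lemma \ref{lem:hypBMcpl}, then invoke the alternating-series identity for the exit time of $[-a,a]$ for \eqref{lowerhBM} and a coordinate-wise union bound plus reflection and the Gaussian tail for \eqref{upperhBM}. The only cosmetic difference is that the paper carries $a-t/2$ through the Brownian estimates and applies $a-t/2\ge a/2$ at the end, whereas you apply it at the start; the resulting constants are identical.
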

\begin{proof}
By Lemma \ref{lem:hypBMcpl} the process $q_t=d_\HH(\cB(0), \cB(t))$ stochastically dominates $|B(t)|$ where $B$ is a standard Brownian motion. Then
\begin{align*}
 P(\max_{0\le s \le t}d_\HH(\cB(0),\cB(s))\le  a)&\le  P(\max_{0\le s \le t}|B(s)|\le  a)\le \frac{4}{\pi} e^{-\frac{\pi^2 t}{8 a^2}},
\end{align*}
which proves (\ref{lowerhBM}).
The last step follows from the following identity (see e.g.~\cite{BMbook}):
\[
P(\max\limits_{0\le s \le t}|B(s)|\le u)=\frac4{\pi} \sum_{k=0}^\infty \frac{(-1)^k}{2k+1}e^{-\frac{(2k+1)^2\pi^2 t}{8 u^2}}.
\]

From Lemma \ref{lem:hypBMcpl} it follows that $q_t=d_\HH(\cB(0), \cB(t))$ is stochastically dominated by the process $|W(t)|+t/2$ where $W$ is a 2 dimensional standard Brownian motion. Thus
\begin{align*}
 P(\max_{0\le s \le t}d_\HH(\cB(0),\cB(s))\ge  a)&\le  P(\max_{0\le s \le t}(|W(s)|+s/2)\ge  a)\le  P(\max_{0\le s \le t}|W(s)|\ge  a-t/2)\\
 & \le 4 P(\max_{0\le s \le t}B(s)\ge  \tfrac{1}{\sqrt{2}}(a-t/2))=
 4 P(|B(t)|\ge  \tfrac{1}{\sqrt{2}}(a-t/2))
 \\&\le \frac{8\sqrt{t}}{(a-t/2)\sqrt{\pi}}e^{-\frac{(a-t/2)^2}{4t}}\le \frac{16\sqrt{t}}{a \sqrt{\pi}}e^{-\frac{a^2}{16t}}.
\end{align*}
In the second line we compared $W$ to a one dimensional standard Brownian motion $B$, then used the reflection principle. Finally we used the well known tail bound for the normal distribution and $t\le a$.
%
%
\end{proof}

The next lemma shows that $\cB$ approaches its limit point with speed $1/2$.
\begin{lemma}\label{lem:hypBMspeed}
Let $\cB$ be a hyperbolic Brownian motion and let $z(t)$ be the point moving with speed $1/2$ on the geodesic connecting $\cB(0)$ to $\cB(\infty)=\lim_{t\to \infty} \cB(t)$. Then almost surely
\[
\lim_{t\to \infty} \frac1{t}d_\HH(\cB(t), z(t))=0.
\]
\end{lemma}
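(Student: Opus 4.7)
The plan is to work in the upper half plane representation, take $\cB(0)=i$ (without loss of generality by isometry invariance), and write $\cB(t)=X(t)+iY(t)$. The defining SDE $d(X+iY)=Y(dB_1+i\,dB_2)$ yields $dX=Y\,dB_1$ and $dY=Y\,dB_2$, and It\^o's formula produces $\log Y(t)=B_2(t)-t/2$. The law of large numbers for Brownian motion gives $B_2(t)/t\to 0$ a.s., so $Y(t)=e^{-t/2+o(t)}$ and in particular $\int_0^\infty Y(s)^2\,ds<\infty$ a.s. Consequently $X(t)=\int_0^t Y(s)\,dB_1(s)$ is an $L^2$-martingale with a finite a.s.~limit $X^*:=\cB(\infty)\in\R$.

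The main obstacle is pathwise control of the rate at which $X(t)\to X^*$. I would estimate the residual quadratic variation $\varphi(t):=\int_t^\infty Y(s)^2\,ds=\int_t^\infty e^{2B_2(s)-s}\,ds$; the bound $B_2(s)=o(s)$ gives $\varphi(t)\le e^{-t+o(t)}$ a.s. Applying Doob's $L^2$ maximal inequality conditionally on the $\sigma$-algebra generated by $B_2$, combined with a Borel--Cantelli argument along integer times and a short BDG bound to interpolate between integers, yields $|X(t)-X^*|\le e^{-t/2+o(t)}$ a.s.

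With these asymptotics I would make the geodesic $z(t)$ from $i$ to $X^*$ explicit. On the a.s.~event $X^*\ne 0$ it lies on the Euclidean semicircle of radius $r=((X^*)^2+1)/(2|X^*|)$ centered on the real axis through $i$ and $X^*$; a direct angle parametrization together with $\cosh d_\HH(i,z)=1+|z-i|^2/(2\operatorname{Im}(z))$ shows that as $t\to\infty$ one has $v(t):=\operatorname{Im}(z(t))=((X^*)^2+1)e^{-t/2}(1+O(e^{-t}))$ and $u(t)-X^*=O(e^{-t})$, where $u(t):=\operatorname{Re}(z(t))$. The (measure zero) case $X^*=0$ is the trivial vertical geodesic and is handled the same way.

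To finish, plugging into
\[
\cosh d_\HH(\cB(t),z(t))=1+\frac{(X(t)-u(t))^2+(Y(t)-v(t))^2}{2Y(t)v(t)},
\]
the triangle inequality gives $(X(t)-u(t))^2\le 2(X(t)-X^*)^2+2(X^*-u(t))^2=e^{-t+o(t)}$, while $(Y(t)-v(t))^2\le 2Y(t)^2+2v(t)^2=e^{-t+o(t)}$ and the denominator $2Y(t)v(t)=e^{-t+o(t)}$. Therefore $\cosh d_\HH(\cB(t),z(t))\le e^{o(t)}$, giving $d_\HH(\cB(t),z(t))=o(t)$ a.s., which is the required statement.
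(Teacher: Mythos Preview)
Your argument is correct, but it takes a different and more laborious route than the paper's. The paper's key simplification is to choose the half-plane representation so that $\cB(\infty)=\infty$ rather than keeping $\cB(\infty)=X^*\in\R$; in those coordinates the process becomes the $h$-transformed hyperbolic Brownian motion $dy=y(dB_1+dt)$, $dx=y\,dB_2$ with $y(t)=e^{B_1(t)+t/2}$, and the limiting geodesic is simply the vertical ray $z(t)=ie^{t/2}$. The distance then splits via the triangle inequality into $|\log(y\,e^{-t/2})|=|B_1(t)|=o(t)$ and $\operatorname{arccosh}(1+x^2/(2y^2))$, the latter handled by a time-change and the law of the iterated logarithm. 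Your approach instead keeps the natural coordinates, which forces you to (i) prove a quantitative rate $|X(t)-X^*|\le e^{-t/2+o(t)}$ via conditional Gaussian tails and Borel--Cantelli, and (ii) parametrize the random semicircular geodesic explicitly. Both arguments are valid; the paper's choice of chart replaces your steps (i)--(ii) by a two-line triangle-inequality decomposition, while your approach has the minor advantage of not invoking the Doob transform of hyperbolic Brownian motion.
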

\begin{proof}
Consider the half plane representation of $\HH$ where $\cB(0)=i$ and $\cB(\infty)=\infty$, and denote the representation of $\cB$ by $x+i y$. Then $x+i y$ is a hyperbolic Brownian motion conditioned to hit $\infty$, in particular it satisfies
\[
dy =y(dB_1+dt), \qquad dx=y dB_2, \qquad y(0)=1,\quad x(0)=0.
\]
where $B_1, B_2$ are independent standard Brownian motions. (See e.g.~\cite{BVBV_sbo}.)
The geodesic connecting $i$ and $\infty$ is $i e^{t}, t\ge 0$, and we have $z(t)=i e^{t/2}$. By the triangle inequality
\begin{align*}
d_\HH(\cB(t), z(t))&\le d_\HH(x+i y, i y)+d_\HH(i y, z(t))\\
&\le \operatorname{arccosh}\left(1+\tfrac{x^2}{2y^2}\right)+ |\log(y e^{-t/2})|\le \log(2+\tfrac{x^2}{y^2})+ |\log(y e^{-t/2})|.
\end{align*}
We can explicitly solve for $y$ and $x$ from the SDE: $y(t)=e^{B_1(t)+t/2}$, $x(t)=\int_0^t e^{B_1(s)+s/2}ds$.
Since $\frac1{t}|\log(y e^{-t/2})|=\frac{|B_1(t)|}{t}$ converges to 0 a.s., 
 for any $\delta>0$ we have
\begin{align}\label{geobm}
C_\delta^{-1} e^{(1/2-\delta) t}\le y(t)\le C_\delta e^{(1/2+\delta) t}
\end{align}
with a random $C_\delta$.

For the  term $ \log(2+\tfrac{x^2}{y^2})$ we start with the observation that there is a standard Brownian motion $W$ so that $x(t)=W(\int_0^t y(s)^2 ds)$.
Since $\lim_{t\to \infty}\int_0^t y(s)^2 ds= \infty$ by (\ref{geobm}), the iterated logarithm theorem implies that  for any $\eps>0$ we have
\[
\lim_{t\to \infty} \frac{x(t)}{(\int_0^t y(s)^2 ds)^{1/2+\eps}}=0.
\]
Using (\ref{geobm}) we get the following estimate for fixed $\eps, \delta>0$:
\[
\frac{{(\int_0^t y(s)^2 ds)^{1+2\eps}}}{y(t)^2}\le C' e^{t \left\{(1+2\delta)(1+2 \eps)-(1-2\delta)\right\}}=C' e^{(4 \delta  \eps +4 \delta +2 \eps)t}.
\]
This shows that $\lim_{t\to \infty} \tfrac1{t} \log(2+\tfrac{x(t)^2}{y(t)^2})= 0$ a.s.  and completes the proof of the lemma.
\end{proof}
%

%
%
%
%
%
%
The next statement gives an estimate on the modulus of continuity of the hyperbolic Brownian motion.
The proof follows that of the analogous statement for standard Brownian motion.  We include it for completeness. The constants have not been optimized. 
\begin{proposition}\label{prop:modcont}
Let $\cB$ be a hyperbolic Brownian motion. Then there is a random  constant $0<h_0\le 1$ so that a.s.
\begin{align}\label{modcon}
d_{\HH}(\cB(s),\cB(s+h))\le 20 \sqrt{h \log(2+ \tfrac{s+1}{h})},
\end{align}
for all $0<h\le h_0$ and $0\le s$.
\end{proposition}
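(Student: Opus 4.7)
The plan is to adapt the classical Lévy modulus of continuity argument, replacing the Gaussian tail of standard Brownian motion by the tail estimate (\ref{upperhBM}) from Lemma \ref{lem:hypBMtau}. The two key inputs are the strong Markov property of $\cB$ (so that the increment $d_\HH(\cB(s),\cB(s+\cdot))$ has the same distribution as $d_\HH(\cB(0),\cB(\cdot))$) and a Borel-Cantelli argument over a dyadic grid, followed by a one-step triangle-inequality chaining to lift the dyadic bound to arbitrary $(s,s+h)$.

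For each $n\ge 1$ and $k\ge 0$ I would define the dyadic oscillations
$$
X_{n,k}=\sup_{k 2^{-n}\le t\le (k+1)2^{-n}} d_\HH(\cB(k 2^{-n}),\cB(t)),
$$
which by the Markov property all share the common law of $\sup_{0\le t\le 2^{-n}} d_\HH(\cB(0),\cB(t))$. Fix thresholds $a_{n,k}=C\sqrt{2^{-n}\log(2+(k+1)2^n)}$ for a constant $C>4$ to be tuned. For $n\ge 1$ the hypothesis $t\le a$ in Lemma \ref{lem:hypBMtau} holds automatically, and the lemma gives
$$
P(X_{n,k}\ge a_{n,k})\le \frac{16}{C\sqrt{\pi \log(2+(k+1)2^n)}}\,(2+(k+1)2^n)^{-C^2/16}.
$$
Since $C^2/16>1$, the inner sum $\sum_{k\ge 0}(2+(k+1)2^n)^{-C^2/16}$ is $O(2^{-nC^2/16})$, so the double sum over $n$ and $k$ converges. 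Borel-Cantelli then produces a random integer $N$ such that $X_{n,k}<a_{n,k}$ simultaneously for every $n\ge N$ and every $k\ge 0$.

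Setting $h_0=2^{-N}$, take any $(s,h)$ with $0<h\le h_0$ and choose $n\ge N$ with $2^{-n-1}<h\le 2^{-n}$ and $k=\lfloor s 2^n\rfloor$; then $[s,s+h]$ is covered by the two consecutive dyadic intervals of length $2^{-n}$ starting at $k 2^{-n}$ and $(k+1)2^{-n}$. Two applications of the triangle inequality give
$$
d_\HH(\cB(s),\cB(s+h))\le X_{n,k}+X_{n,k+1}\le 2 a_{n,k+1}.
$$
Since $(k+2)2^{-n}\le s+2h$ and $h\le h_0\le 1$, the argument of the logarithm inside $a_{n,k+1}$ is at most a constant multiple of $2+(s+1)/h$, so $2 a_{n,k+1}$ is bounded by a constant times $\sqrt{h\log(2+(s+1)/h)}$.

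The only delicate point is the arithmetic of constants: the Borel-Cantelli window $C>4$ must also respect the final coefficient $20$ in (\ref{modcon}). The elementary bookkeeping above gives a coefficient of roughly $2C\sqrt{6}$, which forces $C$ into a very narrow window just above $4$. Since the statement explicitly allows unoptimized constants, any mild refinement---for instance, a genuine multi-scale chaining down through finer dyadic levels, or using the stronger comparison in Lemma \ref{lem:hypBMcpl} to sharpen the tail---gives more than enough slack to hit the prescribed threshold $20$.
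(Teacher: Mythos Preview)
Your argument is the same as the paper's: dyadic oscillations $X_{n,k}$, the tail bound from Lemma~\ref{lem:hypBMtau}, Borel--Cantelli over $(n,k)$, then a triangle-inequality step to pass from dyadic endpoints to arbitrary $(s,s+h)$.

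There is one slip. With $X_{n,k}$ defined as the maximal distance from the \emph{left} endpoint $\cB(k2^{-n})$, the chain you need when $s\in I_{n,k}$ and $s+h\in I_{n,k+1}$ is
\[
\cB(s)\;\to\;\cB(k2^{-n})\;\to\;\cB((k+1)2^{-n})\;\to\;\cB(s+h),
\]
which costs $2X_{n,k}+X_{n,k+1}$, not $X_{n,k}+X_{n,k+1}$. (The middle leg is bounded by $X_{n,k}$ since $(k+1)2^{-n}$ is the right endpoint of $I_{n,k}$.) This is exactly what the paper does, obtaining $2\Delta_{m,n}+\Delta_{m+1,n}\le 3\cdot\tfrac{9}{2}\sqrt{2^{-n}\log(2^n+m+2)}$.

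This also resolves your worry about constants. The paper simply takes $C=\tfrac{9}{2}$ (so $C^2/16>1$ and the double sum converges), and then the comparison $2^{-n}\le 2h$ together with $2^n+m+2\le 2+(s+1)/h$ gives
\[
3\cdot\tfrac{9}{2}\sqrt{2}\,\sqrt{h\log\bigl(2+\tfrac{s+1}{h}\bigr)}=\tfrac{27}{\sqrt{2}}\sqrt{h\log\bigl(2+\tfrac{s+1}{h}\bigr)}\le 20\sqrt{h\log\bigl(2+\tfrac{s+1}{h}\bigr)}.
\]
No multi-scale chaining or sharper tail is needed; the single-scale argument with the correct three-term bound already lands under~$20$.
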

Note that the  proof below also shows that there is a random constant $C$ so that (\ref{modcon}) holds for all $0<h\le 1$ with $C$ in place of 20.
\begin{proof}
Let $I_{m,n}=[m2^{-n}, (m+1) 2^{-n}]$ and $\Delta_{m,n}=\max\limits_{t\in I_{m,n}}d_{\HH}(\cB(m 2^{-n}), \cB(t)) $ for $m, n\ge 0$. If $2^{-n/2}<u$ then by Lemma \ref{lem:hypBMtau} we have
\begin{align*}
P(\Delta_{m,n}\ge 2^{-n/2} u)&=P(\max_{0\le t\le 2^{-n}} d_{\HH}(\cB(0), \cB(t))\ge 2^{-n/2} u)
\le \frac{16}{ u\sqrt{\pi} }e^{-\frac{u^2}{16}}.
\end{align*}
Thus for $n\ge0$, $m\ge 0$ we get
\begin{align*}
P(\Delta_{m,n}\ge \tfrac{9}{2} \cdot2^{-n/2}\sqrt{\log\left(2^n+m+1\right)})\le \frac{3  \left(2^n+m+1\right)^{-5/4}}{   \sqrt{ \log\left(2^n+m+1\right)}}.
\end{align*}
We have $\sum_{n=0}^\infty \sum_{m=0}^\infty   \left(2^n+m+1\right)^{-5/4}<\infty$. By the Borel-Cantelli  lemma there is a random $N_0\ge 1$ so that if $n\ge N_0$  then
\begin{align}\label{aaa}
\Delta_{m,n}\le \tfrac92 \sqrt{2^{-n} \log\left(2^n+m+1\right)}.
\end{align}
We will show that (\ref{modcon}) holds with $0<h\le h_0=2^{-N_0}$ and $0\le s$. Let $m, n$ be  nonnegative integers with $2^{-n-1}<h\le 2^{-n}$ and $m 2^{-n}\le s\le (m+1)2^{-n}$. Then we have $n\ge N_0$, and
using the triangle inequality and (\ref{aaa}) we get
\begin{align*}
d_{\HH}(\cB(s),\cB(s+h))&\le 2 \Delta_{m,n}+\Delta_{m+1,n}\le 3\cdot \tfrac92  \sqrt{2^{-n} \log\left(2^n+m+2)\right)}\\
&\le 20 \sqrt{h \log(2+\tfrac{s+1}{h})}
\end{align*}
which finishes the proof.
\end{proof}

\noindent \textbf{Acknowledgements.}  The first author was partially supported by the NSF CAREER award DMS-1053280, the NSF award DMS-1712551 and the Simons Foundation. The second author was supported by the Canada Research Chair program the NSERC Discovery Accelerator grant, the 
MTA Momentum Random Spectra research group, and the ERC consolidated grant 648017 (Ab\'ert).


\bigskip\bigskip\bigskip\noindent
Benedek Valk\'o
\\Department of Mathematics
\\University of Wisconsin - Madison
\\Madison, WI 53706, USA
\\{\tt valko@math.wisc.edu}
\\[20pt]
B\'alint Vir\'ag
\\Departments of Mathematics and Statistics
\\University of Toronto
\\Toronto ON~~M5S 2E4, Canada
\\{\tt balint@math.toronto.edu}\\[5pt]
and\\[5pt]
Alfr\'ed R\'enyi Institute of Mathematics
\\
Hungarian Academy of Sciences
\\
Re\'altanoda utca 13-15,\\
H-1053, Budapest, Hungary

\end{document}